\documentclass{article}
\usepackage{amsmath}
\usepackage{amssymb}
\usepackage{amsthm}
\usepackage{bbm,mathtools}
\usepackage{enumerate}
\usepackage{graphicx}
\usepackage{color}
\usepackage{floatrow}
\usepackage{caption}
\usepackage[symbol]{footmisc}

\renewcommand{\thefootnote}{\fnsymbol{footnote}}
\setcounter{footnote}{1}

\usepackage{amsmath}
\usepackage{mathabx}

\newcommand{\one}{\mathbbm 1}

\def\comp{\raise 1pt \hbox{$\scriptstyle\bullet$}}

\def\upto{{\raise 1pt \hbox{$\scriptstyle \,\nearrow\,$}}}
\def\downto{{\raise 1pt \hbox{$\scriptstyle \,\searrow\,$}}}

\def\cl{\mathop{\rm cl}\nolimits}

\def\cl{\mathop{\rm cl}\nolimits}

\def\A{{\cal A}}
\def\B{{\cal B}}

\def\E{{\cal E}}
\def\F{{\cal F}}

\def\G{{\cal G}}
\def\H{{\cal H}}

\def\J{{\cal J}}
\def\K{{\cal K}}

\def\M{{\mathbb {M}}}

\def\P{{\mathbb {P}}}

\def\S{{\cal S}}
\def\Q{{\cal Q}}

\def\V{{\cal V}}

\def\X{{\cal X}}

\newtheorem{theorem}{Theorem}[section]
\newtheorem{lemma}[theorem]{Lemma}
\newtheorem{corollary}[theorem]{Corollary}
\newtheorem{proposition}[theorem]{Proposition}
\newtheorem{example}[theorem]{Example}
\newtheorem{remark}[theorem]{Remark}
\newtheorem{definition}[theorem]{Definition}
\newtheorem{assumption}[theorem]{Assumption}
\newtheorem{property}[theorem]{Property}

\begin{document}
\title{The Riesz representation theorem and weak${}^*$ compactness of semimartingales\footnote{Research is partly supported by a Swiss National Foundation Grant SNF 200020-172815.}}
\author{Matti Kiiski\footnote{ETH Z\"urich, Department of Mathematics, R\"amistrasse 101, 8092, Z\"urich, Switzerland.}}
\maketitle

\begin{abstract}
We show that the sequential closure of a family of probability measures on the canonical space of c{\`a}dl{\`a}g paths satisfying Stricker's uniform tightness condition is a weak${}^*$ compact set of semimartingale measures in the pairing of the Riesz representation theorem under topological assumptions on the path space. Similar results are obtained for quasi- and supermartingales under analogous conditions. In particular, we give a full characterization of the strongest topology on the Skorokhod space for which these results are true.
\end{abstract}

\renewcommand*{\thefootnote}{\arabic{footnote}}

\noindent\textbf{Keywords and phrases.} Skorokhod space, Meyer-Zheng topology, $S$-topology, weak${}^*$ topology,  c{\`a}dl{\`a}g semimartingale
\newline
\newline
\noindent\textbf{AMS subject classification codes.} 28C05, 54D30, 60B05, 60G05

\section{Introduction}
\label{sec:intro}

The Riesz representation theorem states that the operation of integration defines a one-to-one correspondence between the continuous linear functionals on the bounded continuous functions and the Radon measures on a topological space. On the Skorokhod space, it provides a locally convex way of constructing all c{\`a}dl{\`a}g stochastic processes on the canonical space as \emph{tight} probability measures. On conceptual level, any criteria that characterizes a certain object should give rise to some kind of compactness when applied uniformly to a family of objects. We relate Stricker's uniform tightness condition of semimartingales to the weak${}^*$ compactness in the pairing of the Riesz representation theorem on the canonical space of c{\`a}dl{\`a}g paths.

The weak topologies on the Skorohod space and weak convergence of stochastic processes, that is, the sequential convergence of the weak${}^*$ topology of the Riesz representation theorem, has been earlier studied in the works of Meyer and Zheng \cite{meyerzheng},
Zheng \cite{zheng}, Stricker \cite{stricker}, Jakubowski, M{\' e}min and Pages \cite{jakubowski6}, Kurtz \cite{kurtz}, Lowther \cite{lowther2009} and Jakubowski \cite{jakubowski}, \cite{jakubowski4}. We rely heavily on these earlier results. In particular, we utilize the stability results of Kellerer \cite{kellerer}, Meyer and Zheng \cite{meyerzheng}, Jakubowski, M{\' e}min and Pages \cite{jakubowski6}, Jakubowski \cite{jakubowski}, and Lowther \cite{lowther2009}. 

Weak topologies are rich in terms of convergent subsequences and have found various applications in studying convergence of financial markets Prigent \cite{prigent2003}, time series analysis in econometrics Chan and Zhang \cite{chan2010}, stochastic optimal control Kurtz and Stockbridge \cite{kurtz2001}, Bahlali and Gherbal \cite{bahlali2011}, Tan and Touzi \cite{tan2013}, and martingale optimal transport, as introduced by Beiglb{\"o}ck, Henry-Labord{\`e}re
and Penkner \cite{beiglbock}, and extended for continuous time parameter by Dolinsky and Soner \cite{mete2}, and Guo, Tan and Touzi \cite{touzi}. We aim to provide a functional analytic framework that unifies and elaborates these existing results and allows to extend the analysis beyond the convergence of sequences. In particular, the framework allows to study the non-sequential compactness of families of semimartingales. The question of compactness arises naturally in the context of convex conjugate duality on functions and measures on the c{\`a}dl{\`a}g path space. Many problems in quantitative risk management can be embedded in this framework as convex risk measures and their conjugates F{\" o}llmer and Schied \cite{follmer}. The classical example is the minimal superhedging cost of a derivative contract over a convex set hedging positions. We recall the connection between the compactness and the superhedging duality that yields model-independent price bounds for derivative contracts as originally observed by Beiglb{\"o}ck, Henry-Labord{\`e}re and Penkner \cite{beiglbock}, and extended to the continuous time by Dolinsky and Soner \cite{mete2}, and Guo, Tan and Touzi \cite{touzi}.

The objective of the paper is to provide a weak${}^*$ compactness result for c{\`a}dl{\`a}g semimartingales under the most general topological assumption on the path space. Our main contribution is to unify the previous results on the weak convergence of semimartingales and provide an easy method for constructing weak${}^*$ compact sets of semimartingales on the canonical space of c{\`a}dl{\`a}g paths. We also give examples of such sets and show that the examples are consistent with earlier results for Banach spaces of stochastic processes defined over a common probability space.

%The assumption allows to study the weak convergence of stochastic processes as a weak${}^*$ topology in the pairing of the Riesz representation theorem. Thus, our approach is fully consistent with the duality theory of linear topological spaces. This is in contrast to the earlier works \cite{meyerzheng} and \cite{stricker}, where relative sequential compactness results were established for the weak convergence of the Meyer-Zheng pseudo-path topology, and \cite{jakubowski}, where the weak converge of the $S$-topology was studied in the topology induced by the subsequential Skorokhod's representation theorem. 

We characterize the strongest topology on the Skorokhod space for which our main result is true. A natural candidate is Jakubowski's $S$-topology, due to its tightness criteria. However, it is an open problem whether the $S$-topology is sufficiently regular. We address the problem of regularity by introducing a new weak topology on the Skorokhod space that has the same continuous functions as the $S$-topology, suitable compact sets and additionally satisfies a strong separation axiom. The topological space is perfectly normal ($T_6$) in comparison to the Hausdorff property ($T_2$) that has been verified for the $S$-topology. The topology is obtained from Jakubowski's $S$-topology as a result of a standard regularization method that appears already in the classical works of Alexandroff \cite{alexandroff} and Knowles \cite{knowles}. Our contribution is to carefully show that the important properties of the $S$-topology are preserved in the regularization.

The rest of the paper is organized as follows.

In Section~\ref{sec:preli}, we give rigorous definitions of semimartingale measures and related notions on the canonical space of c{\`a}dl{\`a}g paths. We also provide a brief introduction to the aforementioned Riesz representation theorem that is the basis of our approach. The main results, examples and financial motivation are given in Section~\ref{sec:mainres}. The proofs of the main results and the required auxiliary results are provided in Section~\ref{sec:basicresults}. In Section~\ref{snot}, we characterize the strongest topology on the Skorokhod space for which the results of the two previous sections are true. Some definitions and technical results are omitted in the main part of the article and are gathered in Appendix~\ref{sec:appendix}.

\subsubsection*{Conventions and notations}
\label{sec:not}

Throughout, the comparatives 'weaker' and 'stronger' should be understood in the wide sense 'weaker or equally strong' and 'stronger or equally strong', respectively. We say that two topologies are 'comparable', if one is stronger than another.

We fix the following notations.

$\mathbb{N}$ denotes the family of natural numbers and $\mathbb{N}_0:=\{0\}\cup\mathbb{N}$.

$\mathbb{R}$ (resp. $\mathbb{R}_{+}$) denotes the family of real (resp. non-negative real) numbers and $\overline{\mathbb{R}}:=\mathbb{R}\cup\{\pm\infty\}$.

$\mathbb{Q}$ denotes the family of rational numbers.

$|x|:=|x_1|+|x_2|+\cdots+|x_d|$, $x=(x_1,x_2,\dots,x_d)\in\mathbb{R}^d$.

$x\vee y:=\max\{x,y\}$, $x\wedge y:=\min\{x,y\}$, $x^+:=x\vee 0$, $x^-:=-(x\wedge 0)$, $x,y\in\mathbb{R}$.

$x^+=x^+_1+x^+_2+\cdots+x^+_d$, $x^-=x^-_1+x^-_2+\cdots+x^-_d$, $x=(x_1,x_2,\dots,x_d)\in\mathbb{R}^d$.

$\|x\|_\infty:=|x_1|\vee|x_2|\vee\cdots\vee|x_d|$, $x=(x_1,x_2,\dots,x_d)\in\mathbb{R}^d$.

$g\circ f$ denotes the composition mapping of $f:X\rightarrow Y$ and $g: Y\rightarrow Z$ i.e. $g\circ f: X\rightarrow Z$, where $[g\circ f](x):=g(f(x))\in Z$, $x\in X$.

$\mathbb{D}$, $\mathbb{D}(I)$ and $\mathbb{D}(I;\mathbb{R}^d)$ denote the $\mathbb{R}^d$-valued c{\`a}dl{\`a}g functions on $I$.

$\mathbb{V}(I)$ denotes the functions of finite variation on $I$.

$\mathbb{C}(\mathbb{X})$ (resp. $\mathbb{C}_{b}(\mathbb{X})$) denotes the family of continuous (resp. bounded continuous) functions on a topological space $\mathbb{X}$; e.g., $\mathbb{X}=\mathbb{D}$.

$\mathbb{U}(\mathbb{D})$ (resp. $\mathbb{U}_{b}(\mathbb{D})$) denotes the family of upper semi-continuous (resp. bounded~upper semi-continuous) functions on $\mathbb{D}$.

$\mathbb{B}_{b}(\mathbb{D})$ (resp. $\mathbb{B}_{b}(\mathbb{D})$) denotes the family of Borel (resp. bounded Borel) functions on $\mathbb{D}$.

$\mathbb{B}_0(\mathbb{D})$ denotes the family of bounded Borel functions vanishing at infinity on $\mathbb{D}$.

$\M_t(\mathbb{D})$ denotes the family of Radon measures of finite variation on the Skorokhod space $\mathbb{D}$.

$\M_\tau(\mathbb{D})$ denotes the family of $\tau$-additive Borel measures of finite variation on the Skorokhod space $\mathbb{D}$.

$\M_\sigma(\mathbb{D})$ denotes the family of $\sigma$-additive Borel measures of finite variation on the Skorokhod space $\mathbb{D}$.

If $\M_t(\mathbb{D})=\M_\tau(\mathbb{D})=\M_\sigma(\mathbb{D})$, then we denote the all three families by $\M(\mathbb{D})$.

$\M_+(\mathbb{D})$ denotes the family of non-negative elements of $\M(\mathbb{D})$.

$\P(\mathbb{D})$ (resp. $\P(\mathbb{R^d})$) denotes the family of all probability measures on the Skorokhod space $\mathbb{D}$ (resp. the Euclidean space $\mathbb{R}^d$).

%$\mu_n\rightarrow_{w^*}\mu$ denotes the weak${}^*$ convergence on $\M(\mathbb{D})$.

%$[A]_{seq}:=\{\mu\in\M(\mathbb{D}):\exists(\mu_n)_{n\in\mathbb{N}}\subset A\text{ s.t. }\mu_n\rightarrow_{w^*} \mu\}$ is the sequential closure.

$\K(\mathbb{D})$, $\B(\mathbb{D})$ and $\B a(\mathbb{D})$ denote the family of compact, Borel and Baire sets on the Skorokhod space $\mathbb{D}$, respectively. Similarly, $\B(\mathbb{R}^d)$ denotes the Borel sets on the Euclidean space $\mathbb{R}^d$.

For a process $X:\Omega\times I\rightarrow\mathbb{R}^d$ and a subset $J\subset I$, we write $X_J$ for the restriction of $X$ on $J$ i.e. $X_J:\Omega\times J\rightarrow\mathbb{R}^d$. In the case of a singleton $J=\{t\}$, $t\in I$, we suppress the dependence on the set $J$ and simply write $X_t$ for the value of $X$ at $t$, whence $X_t:\Omega\rightarrow\mathbb{R}$.

$\|X\|_{\infty}(\omega):=\sup_{t\in I}\|X_t(\omega)\|_\infty$, for $X:\Omega\times I\rightarrow\mathbb{R}^d$.

%$\pi$ denotes a finite partition of $I\subset\mathbb{R}_+$.

$\|X\|_\mathbb{V}(\omega):=\sum_{i=1}^d\sup_{\pi}\{|X^i_0(\omega)|+\sum^n_{k=1}|X^i_{t_k}(\omega)-X^i_{t_{k-1}}(\omega)|\}$, for $X:\Omega\times I\rightarrow\mathbb{R}^d$, where the supremum is taken over all finite partitions $\pi$ of $I$.

$E_Q[f]:=\int fdQ$ denotes the integral, for a measurable function $f:\Omega\rightarrow\mathbb{R}$ and a probability measure $Q$ on $(\Omega,\mathcal{F})$.

$\|f\|_{L^p(Q)}:=\left(E_Q[|f|^p]\right)^{1/p}$, $p\geq 1$, denotes the $L^p$-norm, for a measurable function $f:\Omega\rightarrow\mathbb{R}^d$ and a probability measure $Q$ on $(\Omega,\mathcal{F})$.

$\|X\|_{L^{p,\infty}(Q)}:=\|\|X\|_\infty\|_{L^p(Q)}$, for $X:\Omega\times I\rightarrow\mathbb{R}^d$ and a probability measure $Q$ on $(\Omega,\mathcal{F})$.

%$X=M+A$ denotes a canonical semimartingale decomposition.

$\|X\|_{\H^p(Q)}:=\|\|M\|_\infty+\|A\|_\mathbb{V}\|_{L^p(Q)}$, $p\geq 1$, is the (maximal) $\H^p$-norm, for a semimartingale $X$ whose canonical decomposition under $Q$ is $X=M+A$.

$\E(Q)$ denotes the family of elementary predictable processes bounded by $1$, for a probability measure $Q$ on $(\Omega,\F)$.

$(H\bullet X):=H_0X_0+\int Hd X$ denotes the stochastic integral, for a probability measure $Q$ on $(\Omega,\F)$; the dependence on $Q$ is omitted in this notation.

$\|X\|_{\E^p(Q)}:=\sup_{H\in\E(Q)}\|(H\bullet X)\|_{L^p(Q)}$, $p\geq 1$, for some probability measure $Q$ on $(\Omega,\F)$.

%$\widehat{\B}(\mathbb{D})$ denotes the universal completion of a Borel $\sigma$-algebra ${\B}(\mathbb{D})$.

%$\beta_0$ denotes the topology on $\mathbb{C}_b(\mathbb{D})$ generated by the family of all seminorms $\|\cdot f\|_\infty$ $f\in\mathbb{B}_0(\mathbb{D})$.

$N^{a,b}_\pi$ denotes the number of upcrossings of an interval $[a,b]$ w.r.t. $\pi$.

$N^{a,b}=\sup_\pi N^{a,b}_\pi$ denotes the number of upcrossings of an interval $[a,b]$.

$\Delta X:=X_t-X_{t-}$ denotes the jump of $X$ at $t$.

$[\omega]^t$ denotes the restriction of $\omega$ on $[0,t]$.

\subsubsection*{Terminology}
\label{sec:terminology}

We provide some frequently used terminology. Standard literature references for general topology and topological measure theory are \cite{engelking} and \cite{bogachev}.

Let $X$ be a topological space. A subset $Z\subset X$ is called:

\emph{Compact}, if every cover of this set by open sets contains a finite subcover.

\emph{Relatively compact}, if this set is contained in a compact
set.

\emph{Sequentially compact}, if every infinite sequence of its
elements contains a subsequence converging to an element of $Z$.

\emph{Relatively sequentially compact}, if every infinite sequence
of its elements contains a subsequence converging in $X$.

\begin{remark}
In contrast to the case of a metric space, in a general topological space, neither does compactness imply sequential compactness nor the other way around.
\end{remark}

The \emph{closure} $\cl Z$ of $Z$ is the set of all points $x\in X$ such that every neighborhood of $x$ contains at least one point of $Z$.

The \emph{sequential closure} $[Z]_{seq}$ of $Z$ is the set of all points $x\in X$ for which there is a sequence in $Z$ that converges to $x$.

\begin{remark}
In contrast to the case of a metric space, in a general topological space, the sequential closure of a set is not necessarily a sequentially closed set.
\end{remark}

All topological spaces considered are \emph{Hausdorff} ($T_2$) and a Hausdorff space $X$ is called:

\emph{Regular} ($T_{3}$), if for every point $x\in X$ and every closed set $Z$ in $ X$ not containing $x$, there exists disjoint open sets $U$ and $V$ such that $x\in U$ and $Z\subset V$.

\emph{Completely regular} ($T_{3{^1/_2}}$), if for every point $x\in X$ and every closed set $Z$ in $X$ not containing $x$, there exists a continuous function $f:X\rightarrow [0,1]$ such that $f(x)=1$ and $f(z)=0$ for all $z\in Z$.

\emph{Perfectly normal} ($T_6$), if every closed set $Z\subset X$ has the form $Z=f^{-1}(0)$ for some continuous function $f$ on $ X$.

\emph{Paracompact}, if every open cover of $X$ has an open refinement that is locally finite.

\emph{$k$-space}, if the set $Z\subset X$ is closed in $X$ provided that the intersection of $Z$ with any compact subspace $K$ of the space $X$ is closed in $K$.

\emph{Sequential space}, if every sequentially closed set is closed.

\emph{Fr{\' e}chet-Urysohn space}, if every subspace is a sequential space.

\emph{Polish space}, if the space is homeomorphic to a complete separable metric space.

\emph{Lusin space}, if the space is the image of a complete separable metric space under a continuous one-to-one mapping.

\emph{Souslin space}, if the space is the image of a complete separable metric space under a continuous mapping.

\emph{Radon space}, if every Borel measure on the space is a Radon measure.

\emph{Perfect space}, if every Borel measure on the space is perfect i.e. for every Borel measurable function $f$ for every Borel measure $Q$ the set $f(X)$ contains a Borel set $B$ for which $Q[f^{-1}(B)]=f(X)$

\emph{Angelic space}, if every set $Z\subset X$ with the property that every infinite sequence of its elements has a limit point in $X$, possesses also the following properties: $Z$ is relatively compact and each point in the closure of $Z$ is the limit of some sequence in $Z$.

\begin{remark}
\label{rem:her}
For closed subspaces, all these properties are hereditary, meaning that, if the space has the property, then a closed subspace endowed with the relative topology has the property as well. So, all discussion on these properties generalizes as such for relative topologies on closed sets.
\end{remark}

\section{C{\`a}dl{\`a}g semimartingales as linear functionals}
\label{sec:preli}

In this preliminary section, we define the canonical space for c{\`a}dl{\`a}g semimartingales, and related measures and continuous linear functionals.

\subsection{Canonical space of c{\`a}dl{\`a}g paths}

We fix $I$ to denote a usual time index set of a stochastic process, i.e., $I:=[0,T]$ for $0<T\leq\infty$ or $I:=[0,\infty)$. The Skorokhod space $\mathbb{D}(I;\mathbb{R}^d)$, $d\in\mathbb{N}$, with the domain $I$ consists of $\mathbb{R}^d$-valued c{\`a}dl{\`a}g functions $\omega$ on $I$ that admit a limit $\omega(t-)$ from left, for every $t>0$, and are continuous from right, $\omega(t)=\omega(t+)$, for every $t<T$. The space $\mathbb{D}([0,\infty];\mathbb{R}^d)$ is regarded as a product space $\mathbb{D}([0,\infty);\mathbb{R}^d)\times\mathbb{R}^d$; see Appendix~\ref{sec:othertopos}. We write $\omega=(\omega^1,\dots,\omega^d)$, for $\omega\in\mathbb{D}(I;\mathbb{R}^d)$, if $\omega(t)=(\omega^1(t),\dots,\omega^d(t))$, for every $t\in I$. We denote by $X$ the canonical process of $\mathbb{D}(I;\mathbb{R}^d)$, i.e., $X_t(\omega)=\omega(t)$, for all $(t,\omega)\in I\times\mathbb{D}(I;\mathbb{R}^d)$. We write $X^i$ for each coordinate processes of the canonical process $X$, for $i\leq d$.

We endow the Skorokhod space $\mathbb{D}(I;\mathbb{R}^d)$, $d\in\mathbb{N}$, with the right-continuous version $\F_t:=\bigwedge_{\varepsilon>0}\F^o_{t+\varepsilon}$ of the raw, i.e., unaugmented, canonical filtration $\F^o_t:=\sigma(X_s:s\leq t)$ generated by the canonical process $X$ of $\mathbb{D}(I;\mathbb{R}^d)$.

\begin{remark}
The right-continuous version of the raw canonical filtration is needed in the proof of Proposition~\ref{semitight}. Alternatively, we could use the universal completion of the raw canonical filtration; see Proposition~\ref{cor:meas}~(b).
\end{remark}

A stochastic process is understood as a probability measure on the filtered canonical space $\left(\mathbb{D}(I;\mathbb{R}^d),\F_T,(\F_t)_{t\in I}\right)$, where $\F_T:=\bigvee_{t\in I}\F^o_t=\bigvee_{t\in I}\F_t$ and $T=\sup_{t\in I}t\in(0,\infty]$. The family of all probability measures, i.e. c{\`a}dl{\`a}g processes, on $(\mathbb{D}(I;\mathbb{R}^d),\F_T)$ is denoted by $\mathbb{P}(\mathbb{D}(I;\mathbb{R}^d),\F_T)$ and two elements of $\mathbb{P}(\mathbb{D}(I;\mathbb{R}^d),\F_T)$ are identified as usual, i.e., $P=Q$, if (and only if) one has $P[F]=Q[F]$, for all $F\in\F_T$; cf. Section~\ref{sec:skoro}.

\subsection{Semimartingales on the Skorokhod space}

\label{sec:semidef}

We recall some basic concepts of semimartingale theory in the present setting. All semimartingales are assumed c{\`a}dl{\`a}g. We adapt the terminology of Dolinsky and Soner \cite{mete2}, and Guo, Tan and Touzi \cite{touzi} and call a probability measure $Q$ on $(\mathbb{D}(I;\mathbb{R}^d),\F_T)$ a \emph{martingale measure}, if the canonical process $X$ is a martingale on $\left(\mathbb{D}(I;\mathbb{R}^d),\F_T,(\F_t)_{t\in I},Q\right)$, i.e., if $X_s=E_Q[X_t\mid\F_s]$, for all $s<t$ in $I$, including $t=\infty$ in the case that $I=[0,\infty]$. Remark that $X$ is a martingale on $[0,\infty]$ if and only if $X$ is a uniformly integrable martingale on $[0,\infty)$; see e.g. \cite[Theorem~1.1.2.~(2)]{pham}. We say that the canonical process $X$ is \emph{$L^p$-bounded}, for some $p\geq 1$, on $(\mathbb{D}(I;\mathbb{R}^d),\F_T,Q)$, if $\sup_{t\in I}\|X_t\|_{L^p(Q)}<\infty$.

(\emph{Special}) \emph{semimartingale} and \emph{supermartingale measures} are defined similarly to martingale measures. On $\left(\mathbb{D}(I;\mathbb{R}^d),\F_T,(\F_t)_{t\in I},Q\right)$, for a fixed probability measure $Q$, let $\E(Q)$ denote the family of elementary predictable integrands, i.e., the family of adapted c{\`a}gl{\`a}d processes of the form
\begin{equation}
\label{eq:strats}
H^i=H^i_0\one_{\{0\}}+\sum_{k=1}^n H^i_{t^i_{k-1}}\one_{(t^i_{k-1},t^i_k]},\ i\leq d,
\end{equation}
where $n\in\mathbb{N}$, $0= t^i_0\leq t^i_1\leq\cdots\leq t^i_n$ in $I$ and each $H^i_{t^i_{k}}$ is $\F_{t^i_k}$-measurable random variable in $L^\infty(Q)$ satisfying $|H^i_{t^i_{k}}|\leq 1$. For a family $\Q\subset\P\left(\mathbb{D}(I;\mathbb{R}^d),\F_T\right)$, consider the following condition
\[
\lim_{c\rightarrow\infty}\sup_{Q\in\Q}\sup_{H\in\E(Q)}Q\left[|(H\bullet X)_t|>c\right]=0,\ \forall t\in I, \label{ut} \tag{UT}
\]
where
\[
(H\bullet X)_t=\sum_{i=1}^d (H^i\bullet X^i)_t,\ t\in I.
\]
The condition \eqref{ut} was introduced by Stricker in \cite{stricker}. By the classical result of Bichteler, Dellacherie and Mokobodzki, a probability measure $Q$ on $\left(\mathbb{D}(I;\mathbb{R}^d),\F_T\right)$ is an $(\F_t)_{t\in I}$-semimartingale measure if and only if $\Q=\{Q\}$ satisfies the condition \eqref{ut}. The family of process \eqref{eq:strats} generates the predictable $\sigma$-algebra and the condition \eqref{ut} is sometimes called the \emph{predictable uniform tightness condition} (P-UT); see e.g. \cite[Thm.~3.21]{hewangyan}. Remark that, for semimartingale measures, no integrability condition is imposed on $X_0$, i.e., the localization of the local martingale in a canonical semimartingale decomposition is understood in the sense of \cite[Def.~7.1]{hewangyan}; cf. \cite[Rem.~6.3]{jacodshiryaev}. We say that a semimartingale measure $Q$ is \emph{of class $\H^p$}, if, on $\left(\mathbb{D}(I;\mathbb{R}^d),(\F_t)_{t\in I},\F_T,Q\right)$, the canonical process $X$ decomposes to a (local) martingale $M$ and a (predictable) finite variation process $A$, $A_0=0$, such that
\begin{equation}
\label{eq:hfinite}
X=M+A\text{ and }\|X\|_{\H^p(Q)}=\|\|M\|_\infty+\|A\|_\mathbb{V}\|_{L^p(Q)}<\infty.
\end{equation}
Every semimartingale of class $\H^p$, for some $p\geq 1$, is a special semimartingale. To obtain compact statements for quasi- and supermartingales, we introduce two conditions. The first condition is
\[
\label{ub} \tag{UB}
\sup_{Q\in\Q}\sup_{t\in I}\left(E_Q[|X_t|]+\sup_{H\in\E(Q)}E_Q[(H\bullet X)_t]\right)<\infty.
\]
The second condition is the same condition, but the $L^1$-boundedness is strengthened to the uniform integrability of the negative parts, for every $t\in I$, i.e.,
\[
\Q\text{ satisfies \eqref{ub} and }\lim_{c\rightarrow\infty}\sup_{Q\in\Q}E_Q[X^{-}_t\one_{\{X^{-}_t>c\}}]=0,\ \forall t\in I. \label{ui} \tag{UI}
\]
The uniform integrability in \eqref{ui} yields the convergence of the first moments that preserves the supermartingale property; see Proposition~\ref{pro:superlimit}. If we insist that $t^i_n=t$ in \eqref{eq:strats}, then the second supremum in \eqref{ub} is attained, by choosing
\[
H^i_k=\text{sign}(E_Q[X^i_{t^i_k}-X^i_{t^i_{k-1}}\mid\F_{t^i_{k-1}}]),\ 1\leq k\leq n,\ i\leq d,
\]
for which the value of the integral is equal to the $(\F_t)_{t\in I}$-conditional variation of $X^i$ on $[0,t]$,
\begin{equation}
\label{condvar}
\text{Var}^Q_t(X^i):=\sup E_Q\left[|X^i(0)|+\sum_{k=1}^n|E_Q[X^i_{t^i_k}-X^i_{t^i_{k-1}}\mid\F_{t^i_{k-1}}]|\right],\ i\leq d,
\end{equation}
where the supremum is taken over all partitions $0\leq t^i_0\leq t^i_1\leq \cdots\leq t^i_n=t$, $n\in\mathbb{N}$; see e.g. \cite[B,~Appendix~II]{dellacheriemeyer}. A probability measure $Q$ on $\left(\mathbb{D}(I;\mathbb{R}^d),\F_T\right)$ is a \emph{quasimartingale measure} if and only if $\Q=\{Q\}$ satisfies the condition \eqref{ub}; see e.g. \cite[Def.~8.12]{hewangyan}. Moreover, a quasimartingale is an $\H^1$-semimartingale if and only if it is bounded in the $L^{1,\infty}$-norm; cf. \cite[B.VII.~(98.9)]{dellacheriemeyer}. Finally, let us note that we have the following hierarchy.
\begin{equation}
\label{eq:hierarchy}
\textnormal{(UI)}\implies\textnormal{(UB)}\implies\textnormal{(UT)}.
\end{equation}
The first implication is obvious. The second implication follows from Lemma~\ref{lem:lowther}.

\begin{lemma}
\label{lem:lowther}
There exists a constant $b>0$ such that, for any $Q\in\P\left(\mathbb{D}(I),\F_T\right)$, $H\in\E(Q)$ and $c>0$, we have
\begin{equation}
\label{eq:low}
Q[|(H\bullet X)_t|> c]\leq \frac{b}{c}\left(E_Q[|X_t|]+\sup_{H'\in\E(Q)}E_Q[(H'\bullet X)_t]\right),\ t\in I,
\end{equation}
where the right-hand side is possibly infinite.
\end{lemma}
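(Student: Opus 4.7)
The strategy is to decompose $X$ via Rao's theorem and bound the martingale and finite-variation parts separately.

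We may assume the right-hand side of~\eqref{eq:low} is finite, since otherwise the inequality is trivial. Under that assumption, the quasimartingale characterization recalled just above the lemma forces $Q$ to be a quasimartingale measure up to time $t$, so Rao's theorem furnishes a decomposition $X = M + A$ on $[0, t]$ in which $M$ is an $L^1$-bounded (hence uniformly integrable) martingale and $A$ is predictable with $A_0 = 0$ and integrable total variation. Moreover, the duality formula~\eqref{condvar} yields the bookkeeping estimate
\[
E_Q[\|A\|_{\mathbb{V}, t}] \;\leq\; \sup_{H \in \H(Q)} E_Q[(H \circ X)_t],
\]
obtained by choosing, coordinate-wise, $H^i_0 = 0$ and $H^i_{t^i_{k-1}} = \mathrm{sign}\, E_Q[X^i_{t^i_k} - X^i_{t^i_{k-1}} \mid \F_{t^i_{k-1}}]$, applying the tower property, and then passing to the supremum over partitions.

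Splitting $(H \circ X)_t = (H \circ M)_t + \int_0^t H \, dA$ and using $|H| \leq 1$, the finite-variation piece is pathwise bounded by $\|A\|_{\mathbb{V}, t}$, so Markov's inequality gives $Q\bigl(\bigl|\int_0^t H\,dA\bigr| > c/2\bigr) \leq (2/c)\, E_Q[\|A\|_{\mathbb{V}, t}]$. The martingale piece $(H \circ M)_t$ is a martingale transform (plus the initial value $H_0 M_0$) of the $L^1$-bounded martingale $M$ by a predictable process of modulus at most $1$; Burkholder's classical weak-type $(1,1)$ inequality for martingale transforms then supplies a universal constant $C_1$ with $Q\bigl(|(H \circ M)_t| > c/2\bigr) \leq (C_1/c)\, E_Q[|M_t|]$, absorbing the contribution of $H_0 M_0$ via $|H_0 M_0|\leq|M_0|$ and the submartingale property of $|M|$. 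Combining the two estimates through the union bound, using the pathwise inequality $|M_t| \leq |X_t| + \|A\|_{\mathbb{V}, t}$, and invoking the bookkeeping estimate yields~\eqref{eq:low} with a suitable universal constant $b$.

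The principal technical obstacle is the appeal to Burkholder's weak-type $(1,1)$ bound for martingale transforms in the continuous-time c{\`a}dl{\`a}g setting; this is classical and can be cited from a standard reference. An alternative route is to reduce to a discrete-time martingale sampled at the partition points of $H$ — where Burkholder's original discrete-time inequality applies directly — or to decompose $M$ via Davis' theorem into a locally $L^2$-bounded martingale plus one of integrable variation, controlling the former by Doob's $L^2$-inequality and the latter by Markov together with the variation bound.
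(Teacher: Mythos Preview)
Your argument is correct, but it is heavier than what the paper actually does. You invoke Rao's continuous-time decomposition $X=M+A$ and then need a continuous-time weak-type $(1,1)$ Burkholder inequality for $(H\circ M)_t$; you flag this yourself as the principal technical obstacle and propose, as an alternative, sampling at the partition points of $H$ and applying the discrete Burkholder inequality. That alternative is exactly the paper's proof: since $H\in\H(Q)$ is elementary, the paper never leaves discrete time. It fixes the partition $t^i_0<\dots<t^i_n=t$ carried by $H$, writes the \emph{discrete} Doob decomposition $X^i_{t^i_k}=M^i_{t^i_k}+A^i_{t^i_k}$ with $A^i_{t^i_k}=\sum_{j\le k}E_Q[X^i_{t^i_j}-X^i_{t^i_{j-1}}\mid\F_{t^i_{j-1}}]$, bounds $E_Q[|(H^i\circ A^i)_t|]\le\text{Var}^Q_t(X^i)$ and $E_Q[|M^i_t|]\le E_Q[|X^i_t|]+\text{Var}^Q_t(X^i)$, and then applies Burkholder's original discrete inequality to the martingale transform. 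Summing over coordinates gives the explicit constant $b=2(a+1)d$.

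The trade-off is clear: your route packages the bookkeeping into one citation of Rao/Doob--Meyer and gives a decomposition independent of $H$, at the cost of a continuous-time Burkholder reference and the implicit integrability hypotheses needed to invoke Rao (you assume $\sup_{s\le t}E_Q[|X_s|]<\infty$ via the quasimartingale characterization, which is fine but an extra step). The paper's route is more elementary and self-contained---only the discrete Doob decomposition and discrete Burkholder are needed---precisely because $H$ is already a step process on finitely many intervals.
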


The inequality \eqref{eq:low} is well-known, but we provide the proof for the convenience of the reader in Appendix~\ref{sec:proof}.

A family $\Q\subset\P\left(\mathbb{D}(I;\mathbb{R}^d),\F_T\right)$ is called \emph{$J^1$-tight}, if it is exhausted by a sequence of $J^1$-compact sets; see Appendix~\ref{sec:jone}. Following the classical terminology \cite[Definition~15.48]{hewangyan}, we say that a family $\Q\subset\P\left(\mathbb{D}(I;\mathbb{R}^d),\F_T\right)$ is \emph{$C$-tight}, if it is $J^1$-tight and satisfies
\[
\sup_{Q\in\Q}Q\left[\sup_{s\leq t}|\Delta X_s|> c\right]=0,\ \quad \forall t\in I,\ \forall c>0.
\]
The paths of the canonical process $X$ of $\mathbb{D}(I;\mathbb{R}^d)$ lie on $\mathbb{C}(I;\mathbb{R}^d)$ $Q$-almost surely if and only if $\Q=\{Q\}$ is $C$-tight on $\mathbb{D}(I;\mathbb{R}^d)$. An analogous assertion is true for the H{\" o}lder-continuity.

The Markov property is not preserved by the convergence of finite dimensional distributions, but a stronger property is needed; cf. \cite[Subsection~2.3]{lowther2009}. Following Lowther \cite{lowther2009}, we say that a probability measure $Q\in\P\left(\mathbb{D}(I;\mathbb{R}^d),\F_T\right)$ is \emph{Lipschitz-Markov}, if, for every $s,t\in I$ such that $s<t$, for every bounded Lipschitz continuous function $g:\mathbb{R}^d\rightarrow\mathbb{R}$ with a Lipschitz constant $L(g)\leq 1$, there exists a bounded Lipschitz continuous function $f:\mathbb{R}^d\rightarrow\mathbb{R}$, with a Lipschitz constant $L(f)\leq 1$, such that
\[
f(X_s)=E_{Q}[g(X_t)\mid\F_s]\quad Q\text{-a.s..}
\]
The Lipschitz-Markov property is indeed stronger than the Markov property; consider the sequence of functions $g_n(x)=-n\vee |x|\wedge n$, $n\in\mathbb{N}$, on $\mathbb{R}^d$. The Lipschitz-Markov property was introduced by Kellerer in \cite{kellerer}.

\subsection{The Riesz representation on the canonical space}
\label{sec:riesz}

The Riesz representation theorem for the laws of $\mathbb{D}$-valued random variables, i.e. stochastic processes, requires topological assumptions on the canonical space.

\begin{assumption}
\label{ass:riesz}
The Skorokhod space $\mathbb{D}$ is endowed with a topology, under which $\mathbb{D}$ is a completely regular Radon space and the Borel $\sigma$-algebra coincides with the canonical $\sigma$-algebra.
\end{assumption}

The \emph{strict topology} $\beta_0$ on $\mathbb{C}_b(\mathbb{D})$ is a locally convex topology generated by the family of seminorms
\[
p_g(f):=\|fg\|_\infty,\ f\in\mathbb{C}_b(\mathbb{D}),\ g\in\mathbb{B}_0(\mathbb{D}),
\]
where
\[
\mathbb{B}_0(\mathbb{D}):=\{f\in\mathbb{B}_b(\mathbb{D}):\forall\varepsilon>0\ \exists K^\varepsilon\in\K(\mathbb{D})\text{ s.t. }|f(x)|<\varepsilon\ \forall x\notin K^\varepsilon\}.
\]
The collection of finite intersections of the sets
\begin{equation}
\label{eq:beta0basis}
V_{g,\varepsilon}:=\{f\in\mathbb{C}_b(\mathbb{D}):p_g(f)<\varepsilon\},\ g\in\mathbb{B}_0(\mathbb{D}),\ \varepsilon>0,
\end{equation}
forms a local basis at the origin for the topology $\beta_0$.

The linear space of all $\beta_0$-continuous linear functionals on $\mathbb{C}_b(\mathbb{D})$ is isomorphic to the linear space $\mathbb{M}(\mathbb{D})$ of all countable additive measures of finite total variation on $\mathbb{D}$. More precisely, under Assumption~\ref{ass:riesz}, we have the following Riesz representation theorem on the Skorokhod space.

\begin{lemma}
\label{lem:riesz}
Assume that the Skorokhod space $\mathbb{D}$ satisfies Assumption~\ref{ass:riesz}. Then, any $\mu\in\mathbb{M}(\mathbb{D})$ induces a $\beta_0$-continuous linear functional on $\mathbb{C}_b(\mathbb{D})$ by
\begin{equation}
\label{riesz}
u_\mu(f):=\int f d\mu,\  f\in\mathbb{C}_b(\mathbb{D}),
\end{equation}
and any $\beta_0$-continuous linear functional on $\mathbb{C}_b(\mathbb{D})$ is of the form \eqref{riesz} for some unique $\mu\in\mathbb{M}(\mathbb{D})$, and the one-to-one correspondence $\mu\leftrightarrow u_\mu$ defined by \eqref{riesz} is linear.
\end{lemma}
For the elements of $\mathbb{P}(\mathbb{D})\subset\mathbb{M}^+(\mathbb{D})$, we write
\[
E_Q[f]:=\int fdQ,\ f\in\mathbb{C}_b(\mathbb{D}), \ Q\in\mathbb{P}(\mathbb{D}).
\]
The \emph{weak${}^*$ topology} on $\mathbb{M}(\mathbb{D})$ is a locally convex topology generated by the family of seminorms
\begin{equation}
\label{eq:seminorm}
p_f(\mu):=\left|\int f d\mu \right|,\ f\in\mathbb{C}_b(\mathbb{D}),\ \mu\in\mathbb{M}(\mathbb{D}).
\end{equation}
We write $\mu_\alpha\rightarrow_{w^*}\mu$ for a net $(\mu_\alpha)_{\alpha\in\A}$ with a directed set $\A$ in $\mathbb{M}(\mathbb{D})$ converging in the weak${}^*$ topology to $\mu\in\mathbb{M}(\mathbb{D})$, i.e., if
\begin{equation}
\label{eq:net}
\int f d\mu_\alpha\rightarrow\int f d\mu,\ \forall f\in\mathbb{C}_b(\mathbb{D}).
\end{equation}
In the classical case of a metric space $\mathbb{D}$, the weak${}^*$ topology on the non-negative orthant $\mathbb{M}^+(\mathbb{D})$ is metrizable, i.e., the family of seminorms \eqref{eq:seminorm} can be replaced with a single metric and consequently it is sufficient to consider sequences $(\mu_n)_{n\in\mathbb{N}}$ in \eqref{eq:net} that define a topology; see e.g. \cite[Section~8.3]{bogachev}. Thus, the weak${}^*$ topology coincides with the classical notion of the topology of weak convergence widely used in probability theory, that is, the convergence
\[
E_{Q_n}[ f ]\rightarrow E_Q[f],\ \forall f\in\mathbb{C}_b(\mathbb{D}),
\]
for probability measures $(Q_n)_{n\in\mathbb{N}}$ and $Q$ on a metric space $\mathbb{D}$. Following Sentilles \cite{sentilles}, we write "weak${}^*$" in the place of "weak" to distinguish the topology from the weak topology on the bounded continuous functions in the pairing of Lemma~\ref{lem:riesz}.

\subsubsection{Background}\label{sec:hist}

The classical Riesz representation theorem is stated as a Banach space result for bounded continuous functions vanishing at infinity on a locally compact space. The strict topology $\beta_0$, introduced by Buck in \cite{buck} to locally compact spaces, gives up the Banach space structure, but allows to relax the assumption that the bounded continuous functions are vanishing at infinity. Further observations in the 70's by Giles \cite{giles} and Hoffman-Jorgensen \cite{hoffmanjorgensen} lead to a generalization of the Riesz representation theorem for completely regular spaces; locally compact spaces are completely regular. The weak${}^*$ topology was thoroughly studied by Sentilles in \cite{sentilles} in the case of a general completely regular space. A streamlined proof for Lemma~\ref{lem:riesz} can be found, e.g., in the book of Jarchow \cite{jarchow}. The proof relies on the fact that on a completely regular space every continuous function admits a unique continuous extension to the Stone-{\v C}ech compactification of the space. The fact that the underlying topological space is completely regular ($T_{3{^1/_2}}$) is also \emph{necessary} for the Riesz representation theorem in the sense that the separation axiom cannot be relaxed to a weaker one as there exists examples of regular ($T_3$) spaces on which every continuous function is a constant and on such space the Riesz representation theorem cannot be true; see \cite{herrlich}. However, in our setting it suffices to assume that the space is regular; see Section~\ref{sec:weakstar}.

\section{Main results and examples}

\label{sec:mainres}

A stochastic process is regarded as a probability measure on the canonical space and the family of all probability measures (processes) on the canonical space is endowed with the weak${}^*$ topology~\eqref{eq:seminorm} of the Riesz representation theorem~\eqref{riesz}. We impose the following assumption on the canonical space.

\begin{assumption}
\label{ass:main}
The Skorokhod space is endowed with a regular topology that is weaker than Jakubowski's $S$-topology but stronger than the Meyer-Zheng topology ($MZ$).
\end{assumption}
The $S^*$-topology, introduced in Section~\ref{snot}, meets the previous requirements and is arguably the strongest topology on the Skorokhod space for which the results are true. Indeed, see Theorem~\ref{thm:hierarchy} and Remark~\ref{rem:strongest}.

\subsection{Motivated by the analysis of a problem in finance}

This work was initiated by investigations in robust pricing of derivative contracts of Guo, Tan and Touzi \cite[Lemma~3.7]{touzi} and the current author together with Cheridito, Pr{\" o}mel and Soner \cite[Corollary~6.7]{mart}. In a parallel work \cite{mart}, we describe a general superhedging-sublinear-pricing paradigma and its relation to the compactness studied in the present work.

In our application, the canonical process $X$ presents the value of the underlying asset that can consist of liquid options or common stocks. The objective is to determine the price for a derivative contact $\xi$ that is a function of $X$ i.e. $\xi:\mathbb{D}\rightarrow\mathbb{R}$. Every viable pricing model $Q\in\mathbb{P}(\mathbb{D})$ for a class of $\F_T$-measurable derivative contracts $\mathcal{D}(\F_T)$ on the underlying asset $X$ must satisfy
\[
E_Q[\xi]\leq \Phi(\xi),\ \forall \xi\in \mathcal{D}(\F_T),
\]
where $\Phi(\xi)$ denotes the greatest lower bound for the initial capital requirement at time $t=0$ of all portfolios that produce a value greater or equal to $\xi$ at time $t=T$, for every possible realization of the underlying $X$. Indeed, otherwise it is possible to make a sure profit by creating a portfolio that consists of a short position on a derivative contract $\xi\in\mathcal{D}(\F_T)$ and a long position on a portfolio that produce a value greater or equal to $\xi$. On an efficient market, this should not be possible; cf. the seminal work by Black and Scholes \cite[Abstract]{blackscholes}. On the other hand, for $\xi\in \mathcal{D}(\F_T)$, if the least upper bound $\mathcal{V}(\xi)$ for the expected value of $\xi$ over all viable pricing models is taken to be the lower bound for the price of $\xi$, then one may ask does this lower bound coincide with the upper bound given by the superhedging price given as $\Phi(\xi)$, i.e., one seeks sufficient conditions for the equality
\[
\mathcal{V}(\xi)=\Phi(\xi),\ \forall \xi\in \mathcal{D}(\F_T).
\]
In the case of an increasing, sub-linear $\Phi$, it turns out that the necessary and sufficient condition for the two values to be equal for $\mathcal{D}(\F_T)=\mathbb{C}_b(\mathbb{D})$ is the lower $\beta_0$-semicontinuity of $\Phi$ on $\mathbb{C}_b(\mathbb{D})$. The question whether the set of viable market models is weak${}^*$ compact then arises naturally. Indeed, the weak${}^*$ compactness allows to extended the duality correspondence immediately for $\mathcal{D}(\F_T)=\mathbb{U}_b(\mathbb{D})$ and is also a sufficient condition for the duality to hold on $\mathcal{D}(\F_T)=\mathbb{B}_b(\mathbb{D})$, under another continuity assumption on $\Phi$, namely, the upper $\sigma$-order semicontinuity of $\Phi$ on $\mathbb{B}_b(\mathbb{D})$, provided that the underlying space $\mathbb{D}$ is perfectly normal ($T_6$). These extension criteria are classical in measure transport; consult e.g. the seminal work by Strassen \cite{strassen}.

Though, no probabilistic assumption on the stock dynamics is made a priori, it is reasonable to restrict to the class of semimartingale measures that form the largest class of stock price models for which it is impossible to make unbounded profits (or losses) by selling and buying the stock in a non-anticipative manner. Indeed, this is the statement of the Bicheteler-Dellacherie-Mokobodzki theorem. Further, if one allows non-anticipative trading of the underlying asset without transaction costs and no interest rate, then all viable pricing models are martingale measures on the canonical space of c{\` a}dl{\` a}g paths. If there is static positions available on the market, they translate to additional half-space constraints on the viable martingale measures. This is the so-called martingale optimal transport problem studied e.g. in the aforementioned works of \cite{beiglbock}, \cite{mete2}, \cite{touzi}, \cite{mart}.

\subsection{Main results}\label{sec:maine}

The following Theorem~\ref{thm:semi} is our main result that together with its corollaries and an auxiliary lemma provides an easy method of constructing weak${}^*$ sets of semimartingale measures. The statement regarding sequential compactness in Theorem~\ref{thm:semi} refines the classical results of Meyer and Zheng \cite{meyerzheng}, Stricker \cite{stricker} and Jakubowski \cite{jakubowski} for semimartingale measures, i.e., for semimartingales on the canonical space. The statement about (non-sequential) compactness is, to the best of our knowledge, a new result.

The proofs are postponed to Section~\ref{sec:theproofs}.

\begin{theorem}
\label{thm:semi}
Let $\S$ be a family of semimartingale measures satisfying the condition \eqref{ut}. Under Assumption~\ref{ass:main}, the set $[\S]_{seq}$ is a weak${}^*$ compact and sequentially weak${}^*$ compact set of semimartingale measures.
\end{theorem}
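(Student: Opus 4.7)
I would split the argument into three parts: (i) relative weak${}^*$ compactness of $\S$ in $\M(\mathbb{D})$, (ii) preservation of the semimartingale property under weak${}^*$ sequential limits, and (iii) (sequential) weak${}^*$ compactness of $[\S]_{seq}$. Combining them yields the theorem.

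For (i), Stricker's condition \eqref{ut} together with the Chebyshev-type bound of Lemma~\ref{lem:lowther} provides the uniform control of $\sup_{t\in I}E_Q[|X_t|]$ and of the conditional variations $\mathrm{Var}^Q_t(X^i)$ across $Q\in\S$ that feeds the tightness criteria of Meyer-Zheng and of Jakubowski's $S$-topology (the latter through a uniform bound on the numbers $N^{a,b}$ of upcrossings). Since the topology of Assumption~\ref{ass:main} lies between $MZ$ and the $S$-topology, $\S$ is tight in it. Under the completely regular Radon setup of Subsection~\ref{sec:riesz}, tightness is equivalent to $\beta_0$-equicontinuity of the associated Riesz functionals $u_Q\in(\mathbb{C}_b(\mathbb{D}),\beta_0)'$, and then the Banach-Alaoglu theorem in the strict topology yields relative weak${}^*$ compactness of $\S$ in $\M(\mathbb{D})$.

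For (ii), given $Q_n\to P$ weak${}^*$ with $Q_n\in\S$, I would invoke the Bichteler-Dellacherie-Mokobodzki theorem and reduce the question to verifying \eqref{ut} under $P$. Because $(\F_t)_{t\in I}$ is the raw right-continuous canonical filtration, $\H$ is measure-independent, so for every $H\in\H$ the variable $(H\circ X)_t$ is a fixed measurable function on $\mathbb{D}$. Approximating $\one_{\{|\cdot|>c\}}$ from below by bounded continuous functions of finite-dimensional projections, and using that weak${}^*$ convergence in the $MZ$/$S$-framework entails convergence of finite-dimensional distributions along a co-countable set of times, I would derive, at such good times $t$, the Portmanteau-type estimate $P(|(H\circ X)_t|>c)\le\liminf_n Q_n(|(H\circ X)_t|>c-\eta)$, take the supremum in $H\in\H$, let $c\to\infty$, and transfer the bound to all $t\in I$ by right-continuity of $(\F_t)$. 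Part (iii) then follows: any sequence $(P_n)\subset[\S]_{seq}$ admits, by (i), a weak${}^*$ convergent subsequence $P_{n_k}\to P$ whose limit lies again in $[\S]_{seq}$ by a diagonal extraction from approximating sequences in $\S$, and non-sequential compactness comes from combining the relative compactness in (i) with the weak${}^*$ closedness of the sequential closure on tight families via the $\beta_0$-equicontinuity above.

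The main obstacle is the Portmanteau step in (ii): the map $\omega\mapsto(H\circ X)_t(\omega)$ is not weak${}^*$ continuous — pointwise evaluation at a fixed time is not continuous in the Meyer-Zheng topology, and only continuous at continuity points of $\omega$ in the $S$-topology. Controlling the exceptional set of times, together with the interplay between the $\F_t$-measurability of $H$ and the weak${}^*$ convergence of $(Q_n)$, will require careful use of the auxiliary results of Section~\ref{sec:basicresults} and of the right-continuity of the filtration. This is also where the precise choice of a regular topology between $MZ$ and $S$ in Assumption~\ref{ass:main} becomes essential rather than a technical convenience.
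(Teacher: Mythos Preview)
Your overall three-part structure matches the paper's, but Part~(i) has a genuine gap. You write that \eqref{ut} together with Lemma~\ref{lem:lowther} yields uniform control of $\sup_{t\in I}E_Q[|X_t|]$ and of the conditional variations $\mathrm{Var}^Q_t(X^i)$. This is the wrong direction: Lemma~\ref{lem:lowther} shows that $L^1$-bounds plus bounded conditional variation (i.e.\ \eqref{ub}) imply \eqref{ut}, not the converse. In general \eqref{ut} gives no $L^1$-control whatsoever---a semimartingale need not even be integrable---so the Meyer--Zheng tightness criterion, which is precisely a uniform bound of the quasimartingale type, is \emph{not} available from \eqref{ut} alone. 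The paper obtains tightness instead by the purely probabilistic route of Proposition~\ref{semitight}: from \eqref{ut} one controls $Q(\|X^{i,t}\|_\infty>c)$ and $Q(N^{a,b}(X^{i,t})>c)$ directly via hitting-time integrands, giving \eqref{us} and hence $S^*$-tightness, without ever passing through moments. Your parenthetical ``the latter through a uniform bound on the numbers $N^{a,b}$'' gestures at the right object, but you cannot reach it via the $L^1$/variation bounds you invoke.

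Your Part~(iii) also relies on a diagonal extraction that, as stated, is not justified: if $P_n\in[\S]_{seq}$ and $P_{n_k}\to P$, a diagonal argument from approximating sequences in $\S$ requires first-countability (or metrizability) of the weak${}^*$ topology on the relevant set, which you have not established. The paper avoids this entirely by invoking the Eberlein--\v Smulian analogue (Proposition~\ref{eqvcomp}): the weak${}^*$ closure of a $\beta_0$-equicontinuous subset of $\M_+(\mathbb{D})$ is metrizable, so the sequential closure coincides with the closure (Corollary~\ref{cor:weakseq}), and compactness of $[\S]_{seq}$ follows immediately from Corollary~\ref{cor:equi}. Your Part~(ii) is close in spirit to Proposition~\ref{pro:semilimit}, and you correctly identify the Portmanteau step as the crux; the paper handles it by passing to an auxiliary class $\J(D)$ of integrands built from continuity sets of the limit law on a dense set $D$, and then approximating general $H\in\H(Q)$ by elements of $\J(D)$ in $Q$-probability.
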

%\begin{proof}
%
%\end{proof}

\begin{corollary}
\label{cor:quasi}
Let $\Q$ be a family of quasimartingale measures satisfying the condition \eqref{ub}. Under Assumption~\ref{ass:main}, the set $[\Q]_{seq}$ is a weak${}^*$ compact and sequentially weak${}^*$ compact set of quasimartingale measures.
\end{corollary}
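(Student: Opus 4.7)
The plan is to mimic the architecture of the proof of Theorem~\ref{thm:semi} line by line, replacing the condition \eqref{ut} with \eqref{ub}, and then replacing the final ``semimartingale limit'' step with a ``quasimartingale limit'' step. First, by the hierarchy \eqref{eq:hierarchy}, \eqref{ub} implies \eqref{ut}, hence also the condition $(\textnormal{US}^*)$, so Proposition~\ref{prohoskoro} gives that $\Q$ is $\beta_0$-equicontinuous. Corollary~\ref{cor:equi} together with Proposition~\ref{eqvcomp} then yields that the weak${}^*$ closure of $\Q$ is both compact and sequentially compact, and Corollary~\ref{cor:weakseq} identifies this closure with $[\Q]_{seq}$. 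Since Theorem~\ref{thm:semi} applies verbatim to $\Q$ (as \eqref{ub} is stronger than \eqref{ut}), every $Q\in[\Q]_{seq}$ is already known to be a semimartingale measure.

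It remains to promote ``semimartingale'' to ``quasimartingale'', i.e., to show that the quantity in \eqref{ub} remains uniformly bounded along weak${}^*$ limits. Fix $Q_n\in\Q$ with $Q_n\to_{*} Q$ and $t\in I$. The $L^1$-bound $\sup_n E_{Q_n}[|X_t|]<\infty$ transfers to $E_Q[|X_t|]<\infty$ by a standard truncation/Fatou argument against bounded continuous functions of $X_t$ combined with the uniform integrability supplied by $\beta_0$-equicontinuity. For the conditional variation, recall from \eqref{condvar} that
\[
\sup_{H\in\H(Q)}E_Q[(H\circ X)_t]=\sum_{i\leq d}\textnormal{Var}^Q_t(X^i),
\]
where the right-hand side is the supremum over finite partitions of expressions involving conditional expectations under $Q$. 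The strategy is to establish lower semicontinuity of $Q\mapsto \textnormal{Var}^Q_t(X^i)$ under weak${}^*$ convergence: for a fixed partition $0=t_0<\cdots<t_n=t$, the sign-optimal integrand $H^i_k=\textnormal{sign}(E_Q[X^i_{t_k}-X^i_{t_{k-1}}\mid\F_{t_{k-1}}])$ depends on $Q$, but the integrand can be approximated by an elementary predictable process whose defining conditional expectations are built from bounded continuous test functions, so that $E_{Q_n}[(H\circ X)_t]\to E_Q[(H\circ X)_t]$ along a suitable subsequence. Taking the sup over partitions afterwards yields $\textnormal{Var}^Q_t(X^i)\leq \liminf_n \textnormal{Var}^{Q_n}_t(X^i)\leq \sup_{Q'\in\Q}\sup_{H\in\H(Q')}E_{Q'}[(H\circ X)_t]$, so the uniform bound is inherited and $Q$ satisfies \eqref{ub}.

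The main obstacle is precisely this lower-semicontinuity step for the conditional variation: $\H(Q)$ depends on $Q$, and conditional expectations under $Q_n$ need not converge to those under $Q$ in a pointwise or $L^1$ sense, so one cannot simply plug the $Q$-optimal integrand into $E_{Q_n}[\comp]$. I would circumvent this by passing to a Skorokhod-type representation guaranteed by the topological setup (or by approximating $\F_{t_{k-1}}$-measurable sign indicators through $\beta_0$-continuous functionals of $(X_s)_{s\leq t_{k-1}}$, as in the proof of Proposition~\ref{pro:semilimit}). Once this approximation is in place, the rest of the proof is mechanical and matches the semimartingale argument.
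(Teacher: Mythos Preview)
Your reduction to Theorem~\ref{thm:semi} via \eqref{eq:hierarchy} is exactly what the paper does, and the paper likewise defers the ``limit is a quasimartingale'' step to a separate proposition (Proposition~\ref{pro:quasilimit}). So the architecture is right.

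The gap is in your sketch of that last step. You write that the $L^1$-bound transfers to $E_Q[|X_t|]<\infty$ ``by a standard truncation/Fatou argument against bounded continuous functions of $X_t$'', and similarly that one can approximate the sign-optimal integrand by ``bounded continuous test functions''. But under Assumption~\ref{ass:main} the evaluation $\omega\mapsto\omega(t)$ is \emph{not} continuous for $t<T$ (see \cite{jakubowski}[p.~11]), so neither $|X_t|$ nor $(H\circ X)_t$ is a continuous functional of the path. Weak${}^*$ convergence therefore gives you nothing against functions of $X_t$ directly, and $\beta_0$-equicontinuity only provides uniform tightness of the laws on path space, not uniform integrability of the one-dimensional marginals. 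Your reference to the approximation machinery of Proposition~\ref{pro:semilimit} helps for the \emph{integrand} side, but not for the \emph{integrator}: even after reducing to a dense set $D$ of times where finite-dimensional convergence holds, you still face the problem that $E_{Q_n}[g(X_s)(X^i_t-X^i_s)]$ need not converge because $X^i_t$ is only $L^1$-bounded, not uniformly integrable.

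The paper's Proposition~\ref{pro:quasilimit} handles both issues by (i) replacing each $X^i_t$ by a time average $\frac{1}{\varepsilon}\int_0^\varepsilon X^i_{t+u}\,du$, which \emph{is} $S^*$-continuous (Example~\ref{contexamples}~(a)), and (ii) truncating $X^i$ at level $c$, at the cost of a Stricker-type inequality $\mathrm{Var}^{Q_n}_t(X^{i,c})\leq 4\,\mathrm{Var}^{Q_n}_t(X^i)$ that controls the conditional variation of the truncated process uniformly in $c$. Only after both regularizations are in place can one pass to the weak${}^*$ limit and then undo them via right-continuity and monotone convergence. Your proposal is missing the truncation step and the accompanying factor-$4$ bound; without it, the lower-semicontinuity of $Q\mapsto\mathrm{Var}^Q_t(X^i)$ that you assert does not follow.
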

%\begin{proof}
%The condition (UB) is stronger than the condition (UT); see \eqref{eq:hierarchy}. Thus, we may invoke Theorem~\ref{thm:semi} and it suffices to show that every element in $[\Q]_{seq}$ is a quasimartingale measure. This is done in Proposition~\ref{pro:quasilimit}.
%\end{proof}

\begin{corollary}
\label{cor:super}
Let $\mathcal{M}$ be a set of supermartingale measures satisfying the condition \eqref{ui}. Under Assumption~\ref{ass:main}, the set $[\mathcal{M}]_{seq}$ is a weak${}^*$ compact and sequentially weak${}^*$ compact set of supermartingale measures.
\end{corollary}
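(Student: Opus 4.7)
The plan is to replay the proof of Corollary~\ref{cor:quasi} one step further, this time exploiting the additional uniform integrability of the negative parts built into \eqref{ui}. By the hierarchy~\eqref{eq:hierarchy}, (UI) implies (UB), so $\mathcal{M}$ in particular satisfies the hypotheses of Corollary~\ref{cor:quasi}. Applying that corollary yields, as a free intermediate step, that $[\mathcal{M}]_{seq}$ is a sequentially weak${}^*$ compact set of quasimartingale measures. The only remaining task is therefore to upgrade each limit point from a quasimartingale measure to a supermartingale measure.

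For this upgrade I would appeal directly to Proposition~\ref{pro:superlimit}, which is precisely the statement that (UI) suffices to push the supermartingale inequality through a sequential weak${}^*$ limit. Concretely, given a sequence $Q_n \rightarrow_{*} Q$ with $Q_n \in \mathcal{M}$, I would fix $s<t$ in $I$ and a non-negative, bounded, $\F_s$-measurable test function $\phi_s$, and try to pass the pre-limit inequality $E_{Q_n}[(X_t-X_s)\phi_s]\le 0$ to the limit. Decomposing $X_t = X_t^+ - X_t^-$, the uniform integrability of $X_t^-$ prevents loss of mass on the negative side, while Fatou's lemma applied to $X_t^+$ controls the positive part from above; these two ingredients together force the desired inequality $E_Q[(X_t-X_s)\phi_s]\le 0$ for the limit measure $Q$.

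The main obstacle, which is exactly what Proposition~\ref{pro:superlimit} is designed to absorb, is that under the topologies permitted by Assumption~\ref{ass:main} the coordinate evaluations $\omega\mapsto\omega(t)$ are not continuous in general, so one cannot read off the limit inequality straight from the definition of weak${}^*$ convergence at a fixed time. Instead one first establishes it at times $t$ for which the evaluation is $Q$-almost surely continuous (a set of times whose complement is at most countable in both the \emph{MZ} and $S$ topologies), and then extends to all $s<t\in I$ by right-continuity of paths together with right-continuity of the filtration $(\F_t)_{t\in I}$. This last step is the only place where the extra uniform integrability of (UI) beyond (UB) is actually needed, and it is also what distinguishes the supermartingale case from the quasimartingale case handled in Corollary~\ref{cor:quasi}.
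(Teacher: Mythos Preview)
Your proposal is correct and follows essentially the same route as the paper: use the hierarchy \eqref{eq:hierarchy} to reduce to Corollary~\ref{cor:quasi} for compactness, and then invoke Proposition~\ref{pro:superlimit} to upgrade each sequential limit to a supermartingale measure. Your informal sketch of how Proposition~\ref{pro:superlimit} works differs in detail from the paper's actual argument (which passes through truncation from above and convergence of finite-dimensional distributions along a dense set via Lemma~\ref{findim}, rather than a direct Fatou/UI decomposition at $Q$-a.s.\ continuity times), but since you are appealing to the proposition rather than reproving it, this does not affect the validity of your proof of the corollary.
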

%\begin{proof}
%The condition (UI) is stronger than the condition (UB); see \eqref{eq:hierarchy}. Thus, we may invoke Corollary~\ref{cor:quasi} and it suffices to show that every element in $[\mathcal{M}]_{seq}$ is a supermartingale measure. This is done in Proposition~\ref{pro:superlimit}.
%\end{proof}

%It is straightforward to construct compact sets of continuous and Markov processes.

By combining one or both of the assertions of the following Lemma~\ref{lem:cm} with Theorem~\ref{thm:semi}, or one of its corollaries, one obtains compact sets of continuous and Markov semimartingales, quasimartingales and supermartingales.

\begin{lemma}\label{lem:cm}
Let $\mathcal{P}$ be a family of probability measures on the Skorokhod space. Under Assumption~\ref{ass:main}, we have the following:
\begin{enumerate}[(a)]
\item If the set $\mathcal{P}$ is $C$-tight, then the set $[\mathcal{P}]_{seq}$ consists of continuous processes.
\item If each measure in the set $\mathcal{P}$ is Lipschitz-Markov, then the set $[\mathcal{P}]_{seq}$ consists of Lipschitz-Markov processes.
\end{enumerate}
\end{lemma}

\subsection{Examples}

The following Example~\ref{ex:mart}, essentially an observation made by Guo, Tan and Touzi \cite[Lemma~3.7]{touzi}, was our original motivation to study weak${}^*$ compactness in the present setting. In Example~\ref{ex:mart}, we allow an infinite time horizon i.e. the index set $I=[0,T]$ for some $T\in(0,\infty]$.

\begin{example}
\label{ex:mart}
 Let $\mathcal{M}^u$ be the family of u niformly integrable ($L^1$-bounded) martingale measures and let $\mathcal{P}$ be a weak${}^*$ compact subset of $\mathbb{P}(\mathbb{R}^d)$, that is, the family of probability measures on $\mathbb{R}^d$. Then, the set
\[
\mathcal{M}^u_\mathcal{P}=\{Q\in\mathcal{M}^u:Q\circ X^{-1}_T\in\mathcal{P}\}
\]
is weak${}^*$ compact and sequentially weak${}^*$ compact.
\end{example}
\begin{proof}
We adapt the proof of \cite[Lemma~3.7]{touzi}. For $a>0$, we have
\[
E_{Q}[|X_t|\one_{\{|X_t|\geq a\}}] \leq  2E_{Q}[(|X_t|-a/2)^+]\leq 2 E_Q[(|X_T|-a/2)^+]
\]
uniformly over $(t,Q)\in I\times\mathcal{M}^u_\mathcal{P}$, and
\[
E_Q[(H\bullet X)_t]=0,
\]
for every (elementary) predictable $|H|\leq 1$, for every $t\in I$, for every $Q\in\mathcal{M}^u$. Thus, by the general form of Prokhorov's theorem, see e.g. \cite[Theorem~8.6.2.]{bogachev}, the family $\mathcal{M}^u_\mathcal{P}$ satisfies the condition \eqref{ui}; cf. \cite[IX,~Lemma~1.11]{jacodshiryaev}. By Example~\ref{contexamples}~(b), the evaluation mapping is (sequentially) continuous at the terminal time, so, we have $\mathcal{M}^u_\mathcal{P}=[\mathcal{M}^u_\mathcal{P}]_{seq}$. A measure $Q$ is a martingale measure for $X$ on $\mathbb{D}(I;\mathbb{R}^d)$ if and only if $Q$ is a supermartingale measure for $X^i$ and $-X^i$, for every $i\leq d$, so, by Corollary~\ref{cor:super}, the set $\mathcal{M}^u_\mathcal{P}$ is weak${}^*$ compact and sequentially weak${}^*$ compact.
\end{proof}

\begin{example}
\label{boundmart}
Let $\mathcal{M}^p$ denote the family of $L^p$-bounded martingale measures. Then, the sets
\begin{equation}
\label{eq:sets}
\mathcal{M}^p_r:=\{Q\in\mathcal{M}^p:\|X\|_{L^{p,\infty}(Q)}\leq r\},\ r\in\mathbb{R}_+,
\end{equation}
are weak${}^*$ compact and sequentially weak${}^*$ compact, for $1< p<\infty$.
\end{example}
\begin{proof}
An increasing continuous function $y\mapsto y^p$ composed with a lower semicontinuous function $y=\|\omega\|_\infty$ is lower semicontinuous, see Lemma~\ref{lem:lsc}, and non-negative, so, by \cite[Pro.~8.9.8.]{bogachev}, the functional $\|X\|_{L^{p,\infty}(Q)}$ is lower semicontinuous in the weak${}^*$ topology. Thus, the set $\mathcal{M}_r^p$ is weak${}^*$ closed, for $r>0$ and $p>1$. The $L^p$-boundedness, for $p>1$, implies that the set $\mathcal{M}_r^p$ satisfies the condition~\eqref{ui}, for $r>0$ and $p>1$, so, the set $\mathcal{M}_r^p$ is weak${}^*$ compact and sequentially weak${}^*$ compact, for $r>0$ and $p>1$; cf. Example~\ref{ex:mart}.
\end{proof}

Assume that a probability measure $Q$ is fixed and $p>1$. Then the Hardy space of $L^p(Q)$-bounded (equivalence classes of indistinguishable) c{\`a}dl{\`a}g martingales $\mathcal{M}^p(Q):=\mathcal{M}^p(\mathbb{D}(I;\mathbb{R}),\F_T,(\F_t)_{t\in I},Q)$ can be identified with the Lebesgue space $L^p(Q):=L^p(\mathbb{D}(I;\mathbb{R}),\F_T,Q)$. Indeed, there exists a linear one-to-one correspondence between the (uniformly integrable) $L^p(Q)$-bounded martingales on $[0,T]$ and the random variables $X_T$ of $L^p(Q)$, $p>1$, as each $X_T\in L^p(Q)$ defines a $Q$-a.s. unique c{\`a}dl{\`a}g $L^p(Q)$-bounded martingale on $[0,T]$ via
\[
X_t:=E_Q[X_T\mid\F_t],\ t\in[0,T],
\]
and vice versa, each such c{\`a}dl{\`a}g martingale has $Q$-a.s. unique terminal value $X_T\in L^p(Q)$. By Doob's $L^p$-maximal inequality, the $L^{p,\infty}(Q)$-norm on $\mathcal{M}^p(Q)$ and the $L^p(Q)$-norm on $L^p(Q)$ are equivalent, for $p>1$. In the case $p=1$, there is no one-to-one correspondence, but we only have $\mathcal{M}^1(Q)\subset L^1(Q)$, for $I=[0,\infty)$. Further discussion on this one-to-one correspondence can be found e.g. in \cite[B.VII.64]{dellacheriemeyer}. Let us mention \cite[Theorem~3]{stricker2}, where the one-to-one correspondence is used in the construction of semimartingale decomposition for quasimartingales, that is a central step in the classical proofs for the Bichteler-Dellacherie-Mokobodzki theorem.

Due to the one-to-one correspondence, the compactness follows from the classical results for Banach spaces. For $p>1$, the space $L^p(Q)$ is a reflexive Banach space, so, the (sequential) weak${}^*$ compactness of the sets \eqref{eq:sets} follows from the Banach-Alaoglu theorem in conjunction with the Eberlein-{\v S}mulian theorem; see \cite{stricker}. The Dunford-Pettis theorem states that a uniformly integrable subset of a non-reflexive Banach space $L^1(Q)$ is relatively sequentially compact in the weak topology, but the random variables of $L^1(Q)$ are not in one-to-one correspondence with neither the family of $L^{1,\infty}(Q)$-bounded, nor $L^{1}(Q)$-bounded martingales, for $I=[0,\infty)$.

\begin{example}
\label{ex:semi}
Let $\H^p$ denote the family of $\H^p$-semimartingale measures. Then, the sets
\begin{equation}
\label{hpllset}
\S^p_r:=\{Q\in\H^p:\|X\|_{L^{p,\infty}(Q)}+\|X\|_{\E^p(Q)}\leq r\},\ r>0,
\end{equation}
are weak${}^*$ compact and sequentially weak${}^*$ compact, for $1\leq p<\infty$.
\end{example}
\begin{proof}
The sets $\S^p_r$, $r> 0$, $p\geq 1$, satisfy the condition \eqref{ub}, so, by Corollary~\ref{cor:quasi}, the sets $[\S^p_r]_{seq}$ are weak${}^*$ compact and sequentially weak${}^*$ compact sets of quasimartingales. Moreover, for any sequence $(Q_n)_{n\in\mathbb{N}}$ in $\S^p_r$ converging in the weak${}^*$ topology to some $Q$, we have
\begin{equation}
\label{eq:unilsc}
\| X \|_{L^{p,\infty}(Q)}\leq\liminf_{n\rightarrow\infty}\| X \|_{L^{p,\infty}(Q^n)}<\infty,
\end{equation}
for $p\geq 1$; cf. Example~\ref{boundmart}. Thus, we have $[\S^p_r]_{seq}\subset\H^1$; cf. \cite[B.VII.98]{dellacheriemeyer}. To show that $\S^p_r=[\S^p_r]_{seq}$ and $[\S^p_r]_{seq}\subset\H^p$, we introduce an auxiliary class $\A$ of smooth elementary integrands of the form
\begin{equation}
\label{eq:smooth}
A^i=\sum_{j=1}^k A^i_{t^i_{j-1}}\varphi_{t^i_{j-1},t^i_j},\ i\leq d,
\end{equation}
where $k\in\mathbb{N}$, $0= t^i_0\leq t^i_1\leq\cdots\leq t^i_k$ in $I$ and each $A^i_{t^i_{j}}$ is \emph{continuous} $\F_{t^i_j-}$-measurable function satisfying $|A^i_{t^i_{j}}|\leq 1$ and each $\varphi_{t^i_{j-1},t^i_j}$ is a \emph{smooth} function on $I$ vanishing outside $(t^i_{j-1},t^i_j+\varepsilon^i_j)$, for some $\varepsilon^i_j\in(t^i_j,t^i_{j+1})$, and satisfies $|\varphi_{t^i_{j-1},t^i_j}|\leq 1$; we allow non-zero $\varphi_{t^i_{k-1},t^i_k}(t^i_k)\in[-1,1]$ for $t^i_k=T$; cf. \eqref{inmeas}. Now, let $Q\in[\S_r^p]_{seq}$ and assume that we are given an elementary predictable integrand $H=(H^i)_{i=1}^d$, i.e., an element of $\E(Q)$, see \eqref{eq:strats}, such that each $H^i_{t^i_j}$ in \eqref{eq:strats} is $\F^o_{t^i_j-}$-measurable. By \cite[A.IV.69~(c)]{dellacheriemeyer}, the domain of $\F^o_{t^i_j-}$-measurable functions is homeomorphic to a closed subset of $\mathbb{D}(I;\mathbb{R}^d)$; cf. Corollary~\ref{minusfilt}. Thus, by Lusin's theorem in conjunction with Tietze's extension theorem, for every $\F^o_{t^i_j-}$-measurable $|H^i_{t^i_j}|\leq 1$, there exists a sequence of continuous $\F^o_{t^i_j-}$-measurable functions $(A^{i,n}_{t^i_j})_{n\in\mathbb{N}}$ with $|A^{i,n}_{t^i_j}|\leq 1$ and such that $A^{i,n}_{t^i_j}\rightarrow H^i_{t^i_j}$ $Q$-a.s., as $n\rightarrow\infty$; see e.g. \cite{feldman} and \cite[2.1.8.]{engelking}. Moreover, c{\`a}gl{\`a}d step functions can be approximated from right with smooth functions (and vice versa), so, there exists a sequence $(A^n)_{n\in\mathbb{N}}$, $A^n=(A^{i,n})_{i=1}^d$, of elements of $\A$ such that $\|A^n\|_\mathbb{V}\leq\|H\|_\mathbb{V}$ $Q$-a.s., for all $n\in\mathbb{N}$, and $(A^n\bullet X)_T\rightarrow (H\bullet X)_T$ $Q$-a.s., as $n\rightarrow\infty$. Integrating by parts, we get
\begin{equation}
\label{eq:upperbound}
|(A^n\bullet X)_T|\leq \left(|X_TA^n_T|+\|X\|_\infty \|A^n\|_\mathbb{V}\right)\leq c\|X\|_\infty,\ A^n_0=0,\ n\in\mathbb{N},
\end{equation}
where $c:=2\|H\|_\mathbb{V}<\infty$ $Q$-a.s. and $\|X\|_\infty\in L^p(Q)$, by \eqref{eq:unilsc}. Thus, by the dominated convergence, for any $Q\in[S^p_r]_{seq}$, we have
\[
\lim_{n\rightarrow\infty}\|(A^n\bullet X)_T\|_{L^p(Q)}=\|(H\bullet X)_T\|_{L^p(Q)}.
\]
The elements of $\A$ can similarly be approximated with the elements of $\E(Q)$. Moreover, due to the uniform bound \eqref{eq:upperbound}, for any sequence of integrands bounded in total variation, by the right-continuity $X=(X^1,X^2,\dots,X^d)$, the $\F^o_{t^i_j-}$-measurability of the random variables $H_{t^i_j}$ can be relaxed to $\F^o_{t^i_j}$-measurability, and further, to $\F^o_{t^i_j+}$-measurability; cf. \eqref{eq:minusplus}. Thus, for any $Q\in[\S_r^p]_{seq}$, we have
\[
\|X\|_{\E^p(Q)}=\|X\|_{\A^p(Q)}:= \sup_{A\in\A}\|(A\bullet X)_T\|_{L^p(Q)}.
\]
Now, since each $A^i$ of $A=(A^i)_{i=1}^d$ in $\A$ is continuously differentiable in $t$, for every $\omega\in\mathbb{D}(I;\mathbb{R}^d)$, the function $|(A\bullet X)_T|^p$ is continuous, see \eqref{inmeas}, and non-negative, so, by Proposition~\ref{eqvcomp} and \cite[Proposition~8.9.8.]{bogachev}, the functional $\|(A\bullet X)_T\|_{L^p(Q)}$ is weak${}^*$ lower semicontinuous on $\S^p_r$, for every $A\in\A$. Consequently, the functional $\|X\|_{\A^p(Q)}$ is weak${}^*$ lower semicontinuous on $\S^p_r$, which in conjunction with the weak${}^*$ lower semicontinuity \eqref{eq:unilsc} of the functional $\|X\|_{L^{p,\infty}(Q)}$, yields the weak${}^*$ closedness of the sets $\S^p_r$ in $\E^p$. Indeed, for any $r>0$ and $p\geq 1$, for any sequence $(Q_n)_{n\in\mathbb{N}}$ in $\S^p_r$, converging in the weak${}^*$ topology to some $Q$, we have
\[
\begin{split}
\| X \|_{L^{p,\infty}(Q)}+\|X\|_{\E^p(Q)}&=\| X \|_{L^{p,\infty}(Q)}+\|X\|_{\A^p(Q)}\\
&\leq\liminf_{n\rightarrow\infty}\left(\| X \|_{L^{p,\infty}(Q_n)}+\|X\|_{\A^p(Q_n)}\right)\\
&=\liminf_{n\rightarrow\infty}\left(\| X \|_{L^{p,\infty}(Q_n)}+\|X\|_{\E^p(Q_n)}\right)\leq r,
\end{split}
\]
i.e., $Q\in\S_r^p$. Thus, we have $\S^p_r=[\S^p_r]_{seq}\subset \H^p$, i.e., the sets $\S^p_r$ are weak${}^*$ compact, for $r>0$ and $p\geq 1$. Every element in $\S^p_r$ is indeed an $\H^p$-semimartingale measure; cf.~\eqref{hpnorm}-\eqref{eq:hardy}.

\end{proof}

The pseudonorm in Example~\ref{ex:semi}, given by the sum of the ${L^{p,\infty}}$-norm and the Emery pseudonorm
\begin{equation}
\label{hpnorm}
\|X\|_{\E^p(Q)}:=\sup_{H\in\E(Q)}\|(H\bullet X)_T\|_{L^p(Q)},\ p\geq 1,
\end{equation}
is equivalent to the (maximal) $\E^p$-norm
\begin{equation}
\label{eq:hardy}
\|X\|_{\E^p(Q)}:=\|\|M\|_\infty+\|A\|_\mathbb{V}\|_{L^p(Q)},\ p\geq 1,
\end{equation}
where $X=M+A$, $A_0=0$, denotes the canonical semimartingale decomposition of $X$ under $Q$; see \cite[B.VII.104]{dellacheriemeyer} and \cite[B,~p.305]{dellacheriemeyer}. Assume that a probability measure $Q$ is fixed and $p>1$. Then, the Hardy space of $\H^p(Q)$-bounded (equivalence classes of indistinguishable) semimartingales $\H^p(Q):=\H^p(\mathbb{D}(I;\mathbb{R}),\F_T,(\F_t)_{t\in I},Q)$ is a Banach space; see \cite[p.292]{hewangyan}. For martingales in particular, the norm $\|X\|_{\E^p(Q)}$ is equivalent to the norm $\|X\|_{L^{p,\infty}(Q)}$, and, as mentioned in the context of Example~\ref{boundmart}, there is an analogous Banach pairing $(\mathcal{M}^p(Q))'=(L^p(Q))'=L^q(Q)=\mathcal{M}^q(Q)$, for $p,q>1$, $1/p+1/q=1$; see \cite[B,~p.253]{dellacheriemeyer} and \cite[p.281]{hewangyan}.

In contrast to the classical Banach space pairing, in the pairing of the Riesz representation theorem it is straightforward to construct compact sets of continuous and Markov processes by invoking the classical stability criteria for the almost sure (H{\" o}lder) continuity and the (Lipschitz) Markov property.

\begin{example}\label{ex:new}
Let $\mathcal{S}$ be a set of semimartingale measures satisfying the condition~\eqref{ut}.
\begin{enumerate}[(a)]
\item Assume that there exist  constants $a,b,c>0$ such that
\begin{equation}
\label{eq:kolmogorov}
\sup_{Q\in\S} E_{Q}[|X_t-X_s|^a]\leq b|s-t|^{1+c},\quad 
\forall s,t\in I.
\end{equation}
Then, the set $[\S]_{seq}$ is a weak${}^*$ compact set of continuous semimartingales.
\item Let $Q$ be the standard Wiener measure on the Skorokhod space $\mathbb{D}(\mathbb{R}_+;\mathbb{R})$. Assume that, for every $Q^\alpha$ in $\mathcal{P}$, there exists a function $\sigma^\alpha:\mathbb{R}_+\times\mathbb{R}\rightarrow\mathbb{R}$ that is continuous, continuously differentiable in the first component and that $Q^\alpha$ is the law of a (unique strong) solution $X^\alpha$ of
\begin{equation}
\label{eq:sde}
X^\alpha_0\in\mathbb{R};\quad X^\alpha_t=X^\alpha_0+\int_0^t \sigma^\alpha(u,X^\alpha_u) d X_u,\quad \forall t\geq 0,\ Q\text{-a.s..}
\end{equation}
Then, the set $[\S]_{seq}$ is a weak${}^*$ compact set of Markov semimartingales.
\end{enumerate}
Analogous assertions are true for the sequential closures $[\mathcal{Q}]_{seq}$ and $[\mathcal{M}]_{seq}$ of a set $\mathcal{Q}$ of quasimartingales measures and a set $\mathcal{M}$ of supermartingale measures satisfying the condition \eqref{ub} and \eqref{ui}, respectively.
\end{example}
\begin{proof}
(a) The Kolmogorov continuity criteria \eqref{eq:kolmogorov} is a sufficient criteria for $C$-tightness of $\S$; see e.g. \cite[XII.1.8]{revuzyor}. Thus, the statement follows from Theorem~\ref{thm:semi} in conjunction with Lemma~\ref{lem:cm}~(a).

(b) We adapt the argument of \cite[Proposition~5]{hirsch}. Under the assumption, an equation of the form~\eqref{eq:sde} admits a unique strong solution and for a fixed $\alpha$, $s\geq 0$ and $x\in\mathbb{R}$, let $X^{\alpha,s,x}=(X^{\alpha,s,x}_t)_{t\geq s}$ denote the solution to
\[
X^{\alpha,s,x}_s=x;\quad X^{\alpha,s,x}_t=x+\int_s^t \sigma^\alpha(u,X^{\alpha,u,x}_u) d X_u,\quad \forall t\geq s,\ Q\text{-a.s.}
\]
and let $M^{\alpha,s,x}=(M^{\alpha,s,x}_t)_{t\geq s}$ denote the process $M^{\alpha,s,x}_t:=\frac{\partial}{\partial x} X^{\alpha,s,x}_t$, that is, for every $t\geq s$, equal to
\[
M^{\alpha,s,x}_t=\exp\left( \int_s^t  \sigma^\alpha_x(u,X^{\alpha,s,x}_u)dX_u-\frac{1}{2}\int_s^t \left(\sigma^\alpha_x\right)^2(u,X^{\alpha,s,x}_u)d u\right)\ Q\text{-a.s.,}
\]
where $\sigma^\alpha_x:=\frac{\partial}{\partial x}\sigma^\alpha$. The process 
$M^{\alpha,s,x}$ is a non-negative local martingale, so, $M^{\alpha,s,x}_t\geq 0$ $Q$-a.s. and $E_Q[M^{\alpha,s,x}_t]\leq 1$, for every $t\geq s$. Assume now that $g$ is a bounded continuously differentiable function with $|g'(x)|\leq 1$, for every $x\in\mathbb{R}$, and fix $t> s\geq 0$. Then, define a function $f:\mathbb{R}\rightarrow\mathbb{R}$ as
\[
f(x):=E_Q[g(X^{\alpha,s,x}_t)],\quad x\in\mathbb{R}.
\]
The function $f$ is a bounded continuously differentiable function with
\[
|f'(x)|=E_Q[g'(X^{\alpha,s,x}_t)M^{\alpha,s,x}_t]\leq 1,\quad x\in\mathbb{R}.
\]
Hence, $X^{\alpha}=(X^{\alpha}_t)_{t\geq 0}$ satisfies the Lipschitz-Markov property on the space $(\mathbb{D}(\mathbb{R}_+;\mathbb{R}),\F_\infty,(\F_t)_{t\geq 0},Q)$, for every $\alpha$, i.e., the law $Q^\alpha$ of $X^\alpha$ is a Lipschitz-Markov measure on the space $(\mathbb{D}(\mathbb{R}_+;\mathbb{R}),\F_\infty,(\F_t)_{t\geq 0})$, for every $\alpha$. Thus, by Theorem~\ref{thm:semi} in conjunction with Lemma~\ref{lem:cm}~(b), every element of $[\mathcal P]_{seq}$ is a (Lipschitz) Markov semimartingale measure.

The statements for quasimartingale measures and supermartingale measures are obtained by replacing Theorem~\ref{thm:semi} above with Corollary~\ref{cor:quasi} and Corollary~\ref{cor:super}, respectively.
\end{proof}

\section{Auxiliary results and the proofs for Section~\ref{sec:maine}}
\label{sec:basicresults}

The purpose of this section is to provide the proofs for Theorem~\ref{thm:semi} and its corollaries that we omitted in Section~\ref{sec:maine} and the required auxiliary results leading to the proofs. In Section~\ref{sec:weakstar}, we establish three basic results for the weak${}^*$ topology deployed in the proof of Theorem~\ref{thm:semi}. The required stability and tightness results for the weak${}^*$ topology are covered in Section~\ref{sec:tightstab}. Finally, the proofs for the results of Section~\ref{sec:maine} are provided in Section~\ref{sec:theproofs}.

In Section~\ref{sec:weakstar}, in addition to that the underlying topological space is regular and Souslin, we assume the space posses the following property.

\begin{property}
\label{separatingfamily}
There exists a countable family of real-valued continuous functions $f_k$, $k\in\mathbb{N}$, such that, for all $x,y\in X$, we have
\begin{equation}
\label{eq:sf}
f_k(x)=f_k(y),\ \forall k\in\mathbb{N}\implies x=y.
\end{equation}
\end{property}

\begin{remark}
A topological space satisfying Property~\ref{separatingfamily} is submetrizable i.e. there exists a weaker topology that is metrizable. Indeed, such topology is given e.g. by the metric
\begin{equation}
\label{eq:metric}
d(x,y):=\sum_{k=1}^\infty 2^{-k}\frac{|f_k(x)-f_k(y)|}{1+|f_k(x)-f_k(y)|},\quad x,y\in X,
\end{equation}
where $f_k$'s are given by \eqref{eq:sf}. The author would like to thank Professor Jakubowski for pointing out this fact.
\end{remark}

Property~\ref{separatingfamily} was extensively studied in \cite{jakubowski5}; see also Section~\ref{sec:sprop}. In particular, we notice that a Skorokhod space satisfying Assumption~\ref{ass:main} has this property. Indeed, for a Skorokhod space $\mathbb{D}(I;\mathbb{R}^d)$, a countable family of separating continuous functions is given e.g. by the family of functions
\[ 
\omega\mapsto\frac{1}{r}\int_q^{q+r}\omega^i(t)dt\text{ and }\omega\mapsto\omega^i(T),\text{ for }I=[0,T],
\]
where $q$ and $q+r$ run over the rationals in $I$ and $i$ over the spatial dimensions $1,\dots,d$; cf. \eqref{inmeas} and \eqref{interm}. Therefore, the results of this section are true for a Skorokhod space satisfying Assumption~\ref{ass:main}. Indeed, the Souslin property is verified in Proposition~\ref{pro:hierarchy}.

\subsection{Weak${}^*$ topology}
\label{sec:weakstar}

The results of this section are established under the assumption that \emph{the space $\mathbb{D}$ is a regular Souslin space satisfying Property~\ref{separatingfamily}}. Under the assumption, we obtain a stronger separation axiom than the required $T_{3{^1/_2}}$; cf. Section~\ref{sec:riesz}. Indeed, combining the fact that the topological space is regular ($T_{3}$) with the fact that the space is a Souslin space, it follows, from a result of Fernique \cite[Proposition~I.6.1]{fernique}, that \emph{the space $\mathbb{D}$ is perfectly normal} ($T_6$).

The families $\M_t(\mathbb{D})$, $\M_\tau(\mathbb{D})$ and $\M_\sigma(\mathbb{D})$ are defined at the end of Section~\ref{sec:not}.

\begin{proposition}
The following characterize the dual space in the pairing of Lemma~\ref{lem:riesz}.
\label{cor:meas}
\begin{enumerate}[(a)]
\item We have that
\begin{equation}
\label{eq:meas}
\M_t(\mathbb{D})=\M_\tau(\mathbb{D})=\M_\sigma(\mathbb{D}).
\end{equation}
\item The dual of $\mathbb{C}_b(\mathbb{D})$ can be identified with the class of measures \eqref{eq:meas} on (the universal completion of) the canonical $\sigma$-algebra under the bilinear form~\eqref{riesz}.
\end{enumerate}
\end{proposition}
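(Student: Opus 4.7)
The plan is as follows. For part~(a), I would first note the tautological inclusions
\[
\M_t(\mathbb{D}) \subset \M_\tau(\mathbb{D}) \subset \M_\sigma(\mathbb{D}),
\]
which reduce the whole statement to showing $\M_\sigma(\mathbb{D}) \subset \M_t(\mathbb{D})$, i.e.\ that every finite Borel measure on $\mathbb{D}$ is Radon. This is precisely the classical theorem of L.~Schwartz on Souslin spaces (see e.g.\ \cite{bogachev}): every finite Borel measure on a Hausdorff Souslin space is automatically tight. Since the standing hypothesis of Subsection~\ref{sec:weakstar} is that $\mathbb{D}$ is a regular Souslin space (hence Hausdorff), Schwartz's theorem applies directly and yields $\M_\sigma(\mathbb{D}) = \M_t(\mathbb{D})$; combined with the trivial inclusions, (a) follows.

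For part~(b), the approach is to invoke the Riesz representation theorem of Subsection~\ref{ss:riesz} and then re-express the conclusion at the level of the canonical $\sigma$-algebra. That version of the Riesz theorem was stated for \emph{completely regular} spaces, which is strictly stronger than mere regularity. The bridging step, already flagged in the preamble of Subsection~\ref{sec:weakstar}, is Fernique's theorem \cite{fernique}: a regular Souslin space is perfectly normal ($T_6$), and $T_6$ implies $T_{3{^1/_2}}$. Hence the Riesz representation applies in our setting and identifies the $\beta_0$-dual of $\mathbb{C}_b(\mathbb{D})$ with $\M_t(\mathbb{D})$ via the pairing~\eqref{riesz}; by part~(a) we may equivalently represent this dual by $\M_\tau(\mathbb{D})$ or $\M_\sigma(\mathbb{D})$.

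To transfer the identification to the canonical $\sigma$-algebra, I would appeal to Property~\ref{separatingfamily}, which guarantees that the canonical $\sigma$-algebra coincides with the Borel $\sigma$-algebra, so the integrals appearing in~\eqref{riesz} are unambiguously defined on measures of the canonical $\sigma$-algebra. The promotion to its universal completion is then a standard measure-theoretic fact for Radon measures: each $\mu\in\M_t(\mathbb{D})$ extends uniquely along its own $\mu$-null sets, and intersecting over all $\mu$ gives an extension to the universally measurable $\sigma$-algebra on which the pairing still represents the same functional. The main obstacle I anticipate is not computational but organizational: ensuring that Schwartz's Souslin--Radon theorem and Fernique's regular-plus-Souslin$\Rightarrow T_6$ implication are assembled in the correct order so that the weaker regularity hypothesis of Subsection~\ref{sec:weakstar} is legitimately reduced to the completely regular framework under which the Riesz representation was originally formulated.
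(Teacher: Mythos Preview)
Your proposal is correct and follows essentially the same route as the paper's proof. For~(a), the paper likewise notes the general inclusions $\M_t\subset\M_\tau\subset\M_\sigma$ (citing \cite{bogachev}) and closes the chain with the Schwartz result that Souslin (the paper writes Lusin) spaces are Radon; for~(b), the paper also invokes the Riesz representation for completely regular spaces together with the unique extension of a Borel probability to its universal completion. One small imprecision: Property~\ref{separatingfamily} by itself does not establish that the canonical and Borel $\sigma$-algebras coincide---that identification uses the concrete structure of the Skorokhod space (cf.\ Proposition~\ref{sproperties}~(d))---so you should either cite that result or assume the coincidence as part of the standing hypothesis rather than deriving it from Property~\ref{separatingfamily}.
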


\begin{proof}
(a) Every Lusin space is a Radon space; see e.g. \cite[p.122]{schwartz}. Thus, we have $\M_\sigma(\mathbb{D})\subset\M_t(\mathbb{D})$. The equality \eqref{eq:meas} follows from the fact that the inclusions $\M_t(\mathbb{D})\subset\M_\tau(\mathbb{D})$ and $\M_\tau(\mathbb{D})\subset\M_\sigma(\mathbb{D})$ are true for an arbitrary topological space; see \cite[Proposition~7.2.2.]{bogachev}.

(b) Let $\widehat{\B}(\mathbb{D})$ denote the universal completion of the Borel $\sigma$-algebra $\B(\mathbb{D})$. For every $\mu\in\P(\B(\mathbb{D}))$, there exists a unique $\widehat{\mu}\in\P(\widehat{\B}(\mathbb{D}))$ such that
\[
\int f d\mu =\int f d\widehat{\mu},\ \forall f\in\mathbb{C}_b(\mathbb{D}).
\]
Since any measure of finite variation is a linear combination of two probability measures, it suffices to observe that the mapping $\mu\mapsto\widehat{\mu}$ is a bijection; see e.g. \cite[A,~32~(c)~(i)]{dellacheriemeyer}. The statement then follows from (a) in conjunction with \cite[Theorem~7.6.3.]{jarchow}.
\end{proof}

\begin{remark}
It also follows that every measure on the class \eqref{eq:meas} is perfect; see e.g \cite[Theorem~7.5.10.~(i)]{bogachev}.
\end{remark}

We use the equality \eqref{eq:meas} without mentioning it when we apply the results from the book of Bogachev \cite{bogachev}.

\subsubsection{The Eberlein-{\v S}mulian properties}
\label{sec:es}

%We have the following analogue of the Eberlein-{\v S}mulian theorem on the positive orthant of the dual.

In this section, we show that the non-negative orthant $\M_+(\mathbb{D})$ endowed with the weak${}^*$ topology is \emph{angelic}. In angelic spaces, the properties of compactness and sequential compactness coincide. In general, one does not imply another; see e.g. \cite[Example~3.4.1]{bogachevtvs}.

\begin{proposition}
\label{eqvcomp}
The space of non-negative measures $\M_+(\mathbb{D})$ endowed with the weak${}^*$ topology is angelic. In particular, for any subset $M\subset\M_+(\mathbb{D})$, the following are equivalent:
\begin{enumerate}[(i)]

\item Any infinite sequence in $M$ has a weak${}^*$ convergent subsequence in $\M_+(\mathbb{D})$,

\item The weak${}^*$ closure of $M$ is weak${}^*$ compact in $\M(\mathbb{D})$.

\end{enumerate}
Moreover, under these conditions, the weak${}^*$ closure of $M$ is metrizable.
\end{proposition}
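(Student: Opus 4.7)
The approach is to reduce everything to two ingredients: the metrizability of weak${}^*$ compact subsets of $\M_+(\mathbb{D})$ coming from a countable separating family, and Prokhorov's theorem in the Souslin setting. First I would establish the metrizability claim. Because $\mathbb{D}$ is a regular Souslin space satisfying Property~\ref{separatingfamily}, there exists a countable family $\{f_n\}_{n \in \mathbb{N}} \subset \mathbb{C}_b(\mathbb{D})$ separating the positive Radon measures. The formula
\[
d(\mu,\nu) := \sum_{n=1}^{\infty} 2^{-n}\bigl(|\mu(f_n) - \nu(f_n)|\wedge 1\bigr), \qquad \mu,\nu \in \M_+(\mathbb{D}),
\]
then defines a metric on $\M_+(\mathbb{D})$ whose topology is coarser than the weak${}^*$ topology. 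On any weak${}^*$ compact subset $C$, the identity from $C$ equipped with the weak${}^*$ topology into $(C,d)$ is a continuous bijection from a compact space onto a Hausdorff space, hence a homeomorphism. This already settles the \emph{moreover} clause.

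From metrizability, the direction $(ii)\Rightarrow(i)$ is immediate: a compact metrizable space is sequentially compact, and any weak${}^*$ convergent subsequence extracted from $M$ has its limit in the weak${}^*$ closure, which by hypothesis is contained in $\M_+(\mathbb{D})$.

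For $(i)\Rightarrow(ii)$ I would invoke Prokhorov's theorem on Souslin spaces, as developed for instance in \cite{bogachev}[Section~8.6]: a subset of $\M_+(\mathbb{D})$ is weak${}^*$ relatively compact if and only if it is uniformly bounded in total variation and uniformly tight, and on a Souslin space sequential relative compactness already entails uniform tightness. Boundedness of $M$ falls out of $(i)$ by testing convergent subsequences against the constant function $1 \in \mathbb{C}_b(\mathbb{D})$, which forces $\mu_{n_k}(\mathbb{D}) \to \mu(\mathbb{D}) < \infty$ and thereby rules out any mass-escaping sequence in $M$; uniform tightness comes directly from the Souslin half of Prokhorov's theorem. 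Combining the two yields weak${}^*$ compactness of the weak${}^*$ closure of $M$.

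The step I expect to be the main obstacle is the passage from sequential relative compactness to uniform tightness in $(i)\Rightarrow(ii)$: this is standard on Polish spaces via Prokhorov's classical argument but is more delicate on a general Souslin space, and is precisely the place where the regular-Souslin assumption on $\mathbb{D}$, together with Property~\ref{separatingfamily}, is fully exploited.
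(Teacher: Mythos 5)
Your metrizability argument and the direction $(ii)\Rightarrow(i)$ are essentially sound (modulo the gloss that Property~\ref{separatingfamily} separates \emph{points} of $\mathbb{D}$, not measures; to get a countable family in $\mathbb{C}_b(\mathbb{D})$ that separates Radon measures you must pass to the countable algebra it generates over $\mathbb{Q}$ and use inner regularity by metrizable compacta, and you must also note that $\M_+(\mathbb{D})$ is weak${}^*$ closed so that limits of subsequences from $M$ stay nonnegative). The paper itself obtains all of this, together with both implications, by a single citation of \cite{bogachev}[Theorem~8.10.4], whose hypothesis is exactly that the completely regular space $\mathbb{D}$ admits a continuous injection into a metrizable space.

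The genuine gap is in your proof of $(i)\Rightarrow(ii)$. You assert that ``on a Souslin space sequential relative compactness already entails uniform tightness'' and call this ``the Souslin half of Prokhorov's theorem.'' No such half exists: the implication from (relative) weak compactness to uniform tightness is the defining property of a \emph{Prokhorov space}, and by a theorem of Preiss the space $\mathbb{Q}$ of rationals --- a countable metrizable, hence Lusin, hence Souslin space --- is \emph{not} a Prokhorov space: it carries a weakly convergent sequence of probability measures that is not uniformly tight. So Souslin-ness (even together with Property~\ref{separatingfamily}, which $\mathbb{Q}$ trivially satisfies) cannot deliver the tightness you need, and the paper nowhere claims that $\mathbb{D}$ with the $S^*$-topology is a Prokhorov space; it only ever uses the \emph{direct} half of Prokhorov's theorem (uniform tightness implies pre-compactness, Proposition~\ref{prohoskoro}). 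The correct route for $(i)\Rightarrow(ii)$, which is what \cite{bogachev}[Theorem~8.10.4] implements, bypasses tightness altogether: one pushes the measures forward under the continuous injection of $\mathbb{D}$ into a metric space, exploits metrizability of the weak topology on nonnegative measures there to convert sequential compactness into compactness, and then pulls the conclusion back using the Radon property. Your argument as written would, if valid, prove that every Souslin space is Prokhorov, which is false; this step needs to be replaced rather than patched.
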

\begin{proof}
By the assumption that the underlying topological space $\mathbb{D}$ is a regular Souslin space, it admits a continuous injective mapping to a metric space; see \cite[Theorem~2.25~(i)]{florescu}. It is also known that if a regular space can be continuously injected into an angelic space, then this regular space is also angelic; see \cite[Theorem~3.3]{floret}. Since the weak${}^*$ topology on the space of non-negative measures $\M_+(\mathbb{D})$ is metrizable for a metrizable topology on the underlying space $\mathbb{D}$, see \cite[Theorem~8.3.2]{bogachev}, and metric spaces are angelic, the space $\M_+(\mathbb{D})$ endowed with the weak${}^*$ topology is angelic under our assumption that the underlying topological space $\mathbb{D}$ is a regular Souslin space. By \cite[Theorem~8.10.4.]{bogachev}, the weak${}^*$ closure of a subset $M$ of $\M_+(\mathbb{D})$ satisfying (i) or (ii) is a compact metrizable subspace of $\mathbb{M}(\mathbb{D})$, so, (i) and (ii) are indeed equivalent for $M$.
\end{proof}

It is immediate from Proposition~\ref{eqvcomp} that the properties of weak${}^*$ compactness and sequential weak${}^*$ compactness are equivalent for the subsets of $\M_+(\mathbb{D})$. In fact, a stronger statement is true. In angelic space, the closure of a relatively compact set is completely exhausted by the limits of sequences of points in this set.

\begin{corollary}
\label{cor:weakseq}
Assume that $M$ is a subset of $\M_+(\mathbb{D})$ that satisfies the equivalent conditions of Proposition~\ref{eqvcomp}. Then the sequential weak${}^*$ closure of $M$ in $\mathbb{M}_+(\mathbb{D})$, i.e., the set
\[
[M]_{seq}=\{\mu\in \mathbb{M}_+(\mathbb{D}):\exists(\mu_n)_{n\in\mathbb{N}}\subset
 M\text{ s.t. }\mu^n\rightarrow_{w^*}\mu\},
\]
is weak${}^*$ closed.
\end{corollary}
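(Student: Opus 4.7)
The plan is to leverage the fact that, under the equivalent conditions of Proposition~\ref{eqvcomp}, the weak${}^*$ closure of $M$ is not only compact but \emph{metrizable}, and then invoke a standard diagonal argument available in metric spaces.

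First I would observe that the sequential closure $[M]_{seq}$ is automatically contained in the weak${}^*$ closure $\overline{M}^{w^*}$ of $M$, simply because a weak${}^*$ convergent sequence has its limit in the closure of its range. Moreover, the positive cone $\mathbb{M}_+(\mathbb{D})$ is weak${}^*$ closed (each seminorm $\mu\mapsto\int f\,d\mu$ for $f\geq 0$ is weak${}^*$ continuous and non-negative on $\mathbb{M}_+(\mathbb{D})$), so $\overline{M}^{w^*}\subset\mathbb{M}_+(\mathbb{D})$. By Proposition~\ref{eqvcomp}, the space $(\overline{M}^{w^*},w^*)$ is compact and metrizable; fix a compatible metric $d$.

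Next, to show $[M]_{seq}$ is weak${}^*$ closed, take any $\mu$ in its weak${}^*$ closure. Since $[M]_{seq}\subset\overline{M}^{w^*}$, we have $\mu\in\overline{M}^{w^*}$, and in the metrizable subspace $(\overline{M}^{w^*},d)$ topological closure coincides with sequential closure. Hence there exists a sequence $(\mu_n)_{n\in\mathbb{N}}\subset [M]_{seq}$ with $d(\mu_n,\mu)\to 0$. For each $n$, by definition of $[M]_{seq}$, pick a sequence $(\mu_n^k)_{k\in\mathbb{N}}\subset M$ with $\mu_n^k\to_{w^*}\mu_n$; since the latter sequence lies in the metrizable set $\overline{M}^{w^*}$, we may choose $k_n$ so that $d(\mu_n^{k_n},\mu_n)<1/n$. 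Setting $\nu_n:=\mu_n^{k_n}\in M$, the triangle inequality gives $d(\nu_n,\mu)\leq 1/n + d(\mu_n,\mu)\to 0$, so $\nu_n\to_{w^*}\mu$, which means $\mu\in[M]_{seq}$.

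The only non-routine ingredient is the metrizability of $\overline{M}^{w^*}$, which is already in hand through Proposition~\ref{eqvcomp}; once that is granted, the result is a standard diagonal argument. The statement then follows.
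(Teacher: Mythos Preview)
Your proof is correct and follows essentially the same route as the paper: both use that, by Proposition~\ref{eqvcomp}, the weak${}^*$ closure $\overline{M}^{w^*}$ is metrizable, from which it follows that sequential closure equals closure for subsets. The paper phrases this abstractly by noting that the metrizable space $\overline{M}^{w^*}$ is Fr{\'e}chet--Urysohn and concludes directly that $[M]_{seq}=\overline{M}^{w^*}$; your explicit diagonal argument is just the hands-on version of that same fact, and in fact your argument could be shortened by applying the metrizability directly to $M$ (rather than to $[M]_{seq}$) to obtain $[M]_{seq}=\overline{M}^{w^*}$ at once.
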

\begin{proof}
By Proposition~\ref{eqvcomp}, the closure of $M$ endowed with the relative topology is a first countable space. In particular, the space is a Fr{\' e}chet-Urysohn space. By \cite[Theorem~1.6.14.]{engelking}, the sequential closure $[M]_{seq}$ coincides with the closure of $M$.
\end{proof}

Various criteria that guarantee tightness and stability of a family of processes are not preserved in the weak${}^*$ convergence, so, the previous results are crucial for constructing weak${}^*$ compact sets of stochastic processes.

\subsubsection{Prokhorov's theorem}

We say that a subset $M$ of $\M(\mathbb{D})$ is \emph{$\beta_0$-equicontinuous} if and only if the respective family of linear functionals
\[
\left\{f\mapsto u_\mu(f):=\int fd\mu: \mu\in \M(\mathbb{D})\right\}
\]
is equicontinuous in the $\beta_0$-topology on $\mathbb{C}_b(\mathbb{D})$, i.e., if, for every $\varepsilon>0$, there exists a $\beta_0$-neighbourhood $V$ in $\mathbb{C}_b(\mathbb{D})$ such that $|u_\mu(f)|<\varepsilon$, for all $(f,\mu)\in V\times M$. 

A measure $\mu\in\mathbb{M}(\mathbb{D})$ is called \emph{tight}, if there exists an exhausting net of compact sets $(K^\varepsilon)_{\varepsilon>0}$ for $\mu$, i.e., $|\mu|(\mathbb{D}\setminus K^\varepsilon)<\varepsilon$, for every $\varepsilon>0$, where $|\mu|$ is the total variation of $\mu$. A subset $M$ of $\M(\mathbb{D})$ is called \emph{uniformly tight}, if there exists a net of compact sets $(K^\varepsilon)_{\varepsilon>0}$ which is uniformly exhausting for the total variation of $M$, i.e., $\sup_{\mu\in M}|\mu|(\mathbb{D}\setminus K^\varepsilon)<\varepsilon$, for every $\varepsilon>0$; cf. \eqref{eq:beta0basis}.

\begin{proposition}
\label{prohoskoro}
A subset $M$ of $\M(\mathbb{D})$ is $\beta_0$-equicontinuous if and only if it is bounded in total variation and uniformly tight, and we have
\begin{enumerate}[(a)]
\item If $M$ is a $\beta_0$-equicontinuous subset of $\M(\mathbb{D})$, then $M$ is relatively compact and relatively sequentially compact in the weak${}^*$ topology.
\end{enumerate}
Moreover, we have the following useful convergence criteria.
\begin{enumerate}[(a)]
\setcounter{enumi}{1}
\item If $(\mu_n)_{n\in\mathbb{N}}$ is an uniformly tight sequence in $\M(\mathbb{D})$ converging in the weak${}^*$ topology to $\mu\in\M(\mathbb{D})$, then, for any $f\in\mathbb{C}(\mathbb{D})$ satisfying 
\[
\lim_{c\rightarrow\infty}\sup_{n}\int|f| \one_{\{|f|\geq c\}}d\mu_n=0,
\]
we have
\[
\int fd\mu_n\rightarrow \int fd\mu,
\quad\textnormal{ as }n
\rightarrow\infty.
\]
\end{enumerate}
\end{proposition}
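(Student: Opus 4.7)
The strategy is to prove the main equivalence by appealing to the classical theory of the strict topology, then derive (a) from the Alaoglu--Bourbaki theorem combined with the metrizability furnished by Proposition~\ref{eqvcomp}, and finally establish (b) by a continuous truncation argument.

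For the equivalence between $\beta_0$-equicontinuity and the conjunction of norm-boundedness and uniform tightness, I would follow Sentilles~\cite{sentilles}. The forward direction splits into two: norm-boundedness follows from the absorbency in $\mathbb{C}_b(\mathbb{D})$ of the $\beta_0$-neighbourhood witnessing equicontinuity together with a uniform boundedness argument, while uniform tightness is obtained by contradiction. If some $\eta>0$ admits, for every compact $K$, a measure $\mu_K\in M$ with $|\mu_K|(\mathbb{D}\setminus K)\geq\eta$, then given a basic neighbourhood $V_{g,\delta}$ (produced by the equicontinuity at scale $\eta/3$) with $|g|<\delta/2$ outside some compact $K_0$, the Radon property of $\mu_{K_0}$ together with the perfect normality of $\mathbb{D}$ permits the construction of a continuous $f$ with $f=0$ on $K_0$, $\|f\|_\infty\leq1$, and $|\mu_{K_0}(f)|\geq\eta/2$; such $f$ lies in $V_{g,\delta}$, contradicting equicontinuity. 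The converse is the main technical obstacle: given $\sup_{\mu\in M}\|\mu\|\leq C$ and uniform tightness, for each $\eta>0$ one must construct a compact $K$, an auxiliary $g\in\mathbb{B}_0(\mathbb{D})$ with $g\equiv1$ on $K$, and a scale $\delta>0$ so that $\|fg\|_\infty<\delta$ forces $|\mu(f)|<\eta$ uniformly in $\mu\in M$. This delicate construction, worked out in \cite{sentilles}, exploits the Radon property of each $\mu\in M$ to trade sup-norm control against $g$ for pointwise mass estimates.

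Part (a) then follows directly. The Alaoglu--Bourbaki theorem gives weak${}^*$ relative compactness of the equicontinuous set $M$ in the dual $\M(\mathbb{D})$. Both norm-boundedness and uniform tightness persist under weak${}^*$ limits---the first by weak${}^*$-lower-semicontinuity of the total variation, the second by the Portmanteau bound $|\mu|(\mathbb{D}\setminus K)\leq\liminf_n|\mu_n|(\mathbb{D}\setminus K)$ on the open complement of $K$, applied after Jordan decomposition---so the weak${}^*$ closure of $M$ is again bounded and uniformly tight. Applying Proposition~\ref{eqvcomp} to the positive and negative parts yields metrizability of the closure, whence compactness and sequential compactness coincide.

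For part (b), fix $\varepsilon>0$ and, using the uniform integrability hypothesis, choose $c>0$ with $\sup_n\int|f|\one_{\{|f|\geq c/2\}}d\mu_n<\varepsilon$. Introduce a continuous cutoff $\chi:\mathbb{R}\to[0,1]$ with $\chi=1$ on $[-c/2,c/2]$ and $\chi=0$ outside $(-c,c)$; then $f\cdot(\chi\circ f)\in\mathbb{C}_b(\mathbb{D})$, so $\int f\cdot(\chi\circ f)\,d\mu_n\to\int f\cdot(\chi\circ f)\,d\mu$ by weak${}^*$ convergence. The residual $\int f\cdot(1-\chi\circ f)\,d\mu_n$ is dominated by $\int|f|\one_{\{|f|\geq c/2\}}\,d\mu_n<\varepsilon$ uniformly in $n$. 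For the residual under the limit $\mu$, the functional $\nu\mapsto\int|f|\,d\nu$ is a supremum of the weak${}^*$-continuous functionals $\nu\mapsto\int(|f|\wedge k)\,d\nu$ and hence weak${}^*$-lower-semicontinuous, so $\int|f|\,d\mu\leq\liminf_n\int|f|\,d\mu_n<\infty$; dominated convergence then yields $\int f\cdot(1-\chi\circ f)\,d\mu\to 0$ as $c\to\infty$. Passing to the limit first in $n$ and then in $c$ concludes the argument.
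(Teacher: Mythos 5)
Your proposal is correct in substance and delegates the hard equivalence to \cite{sentilles} exactly as the paper does, but it takes a genuinely different route for parts (a) and (b). The paper treats (a) as an instance of the general Prokhorov theorem for Radon measures on a completely regular space with metrizable compacts (\cite{bogachev}[Theorem~8.6.7]), whereas you derive relative weak${}^*$ compactness from the Alaoglu--Bourbaki theorem applied to the equicontinuous set and then upgrade to sequential compactness via Proposition~\ref{eqvcomp}; this is closer in spirit to how the paper itself handles Corollary~\ref{cor:equi} (via \cite{kelleynamioka}[18.5]), and it buys a proof that does not pass back through measure-theoretic tightness at all in the compactness step. For (b) the paper simply cites \cite{bogachevgaussian}[Lemma~3.8.7], while your cutoff argument with $\chi\circ f$ is a self-contained and correct replacement (note that, as written, it never actually uses the uniform tightness hypothesis --- the uniform integrability of $f$ plus weak${}^*$ convergence of the bounded truncations suffices). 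One step deserves more care: ``applying Proposition~\ref{eqvcomp} to the positive and negative parts'' to get metrizability of the weak${}^*$ closure of $M$ is loose, because the Jordan decomposition $\mu\mapsto(\mu^+,\mu^-)$ is not weak${}^*$ continuous, so the positive parts of the closure are not the closure of the positive parts. The fix is to decompose along sequences rather than closures: given $(\mu_n)\subset M$, the families $\{\mu_n^+\}$ and $\{\mu_n^-\}$ are bounded and uniformly tight in $\M_+(\mathbb{D})$, hence $\beta_0$-equicontinuous by the characterization already proved, so Proposition~\ref{eqvcomp} yields a common subsequence along which both converge, and the difference of the limits is the weak${}^*$ limit of $(\mu_{n_k})$. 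Similarly, your Portmanteau bound $|\mu|(\mathbb{D}\setminus K)\leq\liminf_n|\mu_n|(\mathbb{D}\setminus K)$ needs the representation of $|\mu|$ on open sets as a supremum over continuous functions vanishing off the open set, which holds here because the measures are Radon; with that remark the stability of uniform tightness under weak${}^*$ limits is fine.
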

\begin{proof}
The underlying topological space is completely regular, and the characterization follows directly from \cite[Theorem~5.1]{sentilles}. The compact subsets of a completely regular Souslin space are metrizable, cf. Proposition~\ref{eqvcomp} and \cite[Lemma~8.9.2.]{bogachev}, which, in conjunction with the fact the space is completely regular, verifies the assumptions for both, the sequential and the non-sequential, Prokhorov's theorem~(a); see \cite[Theorem~8.6.7.]{bogachev}. The convergence criteria (b) is similarly a direct consequence of the fact that the underlying space is completely regular; see \cite[Lemma~3.8.7.]{bogachevgaussian}.
\end{proof}

The characterization of the relative compactness in terms of the property of $\beta_0$-equicontinuity yields also a criteria for compactness of closures (of convex (circled) hulls).

\begin{corollary}
\label{cor:equi}
The closed convex circled hull of a $\beta_0$-equicontinuous subset of $\mathbb{M}(\mathbb{D})$ is $\beta_0$-equicontinuous, weak${}^*$ compact and sequentially weak${}^*$ compact. In particular, the closure and the closed convex hull of a $\beta_0$-equicontinuous set are weak${}^*$ compact and sequentially weak${}^*$ compact.
\end{corollary}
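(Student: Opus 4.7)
The plan is to exploit the standard polar characterization of equicontinuity in the dual pair $(\mathbb{C}_b(\mathbb{D}),\mathbb{M}(\mathbb{D}))$ and then invoke Proposition~\ref{prohoskoro}(a) once. The whole argument is formal, so I do not expect any genuine obstacle.

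First I would unpack the $\beta_0$-equicontinuity of $M$: by definition there exists a $\beta_0$-neighborhood $V$ of zero in $\mathbb{C}_b(\mathbb{D})$ such that $|u_\mu(f)| \leq 1$ for every $(f,\mu) \in V \times M$. The basic neighborhoods $V_{g,\varepsilon} = \{f \in \mathbb{C}_b(\mathbb{D}) : p_g(f) < \varepsilon\}$ that generate $\beta_0$ are sublevel sets of the seminorms $p_g$, hence absolutely convex (i.e., convex and circled). Shrinking $V$ inside such a basic neighborhood if necessary, I may therefore assume $V$ itself is absolutely convex, so that $M$ lies inside the polar
\[
V^{\circ} := \set{\mu \in \mathbb{M}(\mathbb{D})}{\sup_{f \in V}|u_\mu(f)| \leq 1}.
\]

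Second I would observe that $V^{\circ}$ inherits absolute convexity from $V$ and is weak${}^*$ closed as an intersection of the weak${}^*$ closed half-spaces $\{\mu : |u_\mu(f)| \leq 1\}$, $f \in V$. Writing $C$ for the closed convex circled hull of $M$, the inclusion $M \subset V^{\circ}$ together with the fact that $V^{\circ}$ is already weak${}^*$ closed, convex and circled forces $C \subset V^{\circ}$. Unwound, this inclusion is precisely the statement that $C$ is $\beta_0$-equicontinuous.

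Third I would feed this back into Proposition~\ref{prohoskoro}(a): the $\beta_0$-equicontinuity of $C$ yields (sequential) weak${}^*$ pre-compactness of $C$, and the weak${}^*$ closedness of $C$ upgrades pre-compactness to compactness in both the topological and the sequential sense, since any weak${}^*$ limit of a sequence from $C$ lies in $C$. The final sentence of the corollary is then immediate: both $\overline{M}$ and the closed convex hull of $M$ are weak${}^*$ closed subsets of $C$, and closed subsets of a (sequentially) compact set are (sequentially) compact. The only minor formalities worth checking are those flagged above, namely that the basis $V_{g,\varepsilon}$ is absolutely convex (immediate from $p_g$ being a seminorm) and that $V^\circ$ is automatically weak${}^*$ closed, convex and circled when $V$ is absolutely convex (standard bipolar theory). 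No genuinely difficult step appears.
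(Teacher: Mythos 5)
Your argument is correct and follows essentially the same route as the paper: the paper obtains the $\beta_0$-equicontinuity and weak${}^*$ compactness of the closed convex circled hull by citing \cite{kelleynamioka}[18.5], and your polar computation ($M\subset V^{\circ}$ with $V^{\circ}$ absolutely convex and weak${}^*$ closed) is precisely the standard proof of that cited fact, after which both arguments finish by noting that the closure and the closed convex hull are closed subsets of a compact set. The only cosmetic difference is in the sequential statement, where the paper routes through Proposition~\ref{eqvcomp} while you use the sequential pre-compactness already asserted in Proposition~\ref{prohoskoro}(a) together with weak${}^*$ closedness; both are legitimate within the paper's toolkit.
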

\begin{proof}
The weak${}^*$ compactness of the closed convex circled hull of an equicontinuous set follows from \cite[18.5]{kelleynamioka}. Closure and closed convex hull are closed subsets of closed convex circled hull, from which the second statement follows. The $\beta_0$-equicontinuous sets are bounded in total variation, in particular, from below, so, the sequential statements are true, by Proposition~\ref{eqvcomp}.
\end{proof}

\subsubsection{Skorokhod's representation theorem}
\label{sec:skoro}

Jakubowski's fundamental observation was that Property~\ref{separatingfamily} yields a subsequential Skorokhod representation theorem.

\begin{proposition}
\label{prohoskoro2}
Let $(Q_n)_{n\in\mathbb{N}}$ be a sequence converging in the weak${}^*$ topology to $Q$ in $\P(\mathbb{D})$. Then there exists a subsequence $(Q_{n_k})_{k\in\mathbb{N}}$, a probability space $(\Omega,\F,P)$ and $\mathbb{D}$-valued random variables $(Y_k)_{k\in\mathbb{N}}$ and $Y$ on $(\Omega,\F,P)$ such that $Q_{n_k}=P\circ(Y_k)^{-1}$, $k\in\mathbb{N}$, $Q=P\circ Y^{-1}$ and
\begin{equation}
\label{eq:as}
f(Y_k(\omega)){\rightarrow} f(Y(\omega)),\ \forall\omega\in\Omega,\ \forall f\in\mathbb{C}_b(\mathbb{D}).
\end{equation}
\end{proposition}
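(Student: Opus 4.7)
The plan is to reduce the claim to the classical Skorokhod representation on a Polish space by means of the countable family of continuous functions delivered by Property~\ref{separatingfamily}. Write this family as $\{\pi_m\}_{m\in\mathbb{N}}\subset\mathbb{C}_b(\mathbb{D})$ and define the continuous injection $\Phi:\mathbb{D}\to\mathbb{R}^{\mathbb{N}}$ by $\Phi(\omega):=(\pi_m(\omega))_{m}$. Jakubowski's property should be read as saying that this family captures sequential convergence in $\mathbb{D}$, i.e., $\omega^k\to\omega$ in $\mathbb{D}$ if and only if $\pi_m(\omega^k)\to\pi_m(\omega)$ for every $m$; in particular $\Phi$ is a sequential homeomorphism of $\mathbb{D}$ onto $\Phi(\mathbb{D})\subset\mathbb{R}^{\mathbb{N}}$.

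First I would push the sequence forward: set $\tilde Q_n:=Q_n\circ\Phi^{-1}$ and $\tilde Q:=Q\circ\Phi^{-1}$. Since $g\circ\Phi\in\mathbb{C}_b(\mathbb{D})$ for every $g\in\mathbb{C}_b(\mathbb{R}^{\mathbb{N}})$, the assumed weak${}^*$ convergence $Q_n\to Q$ entails $\tilde Q_n\to\tilde Q$ in the usual sense on the Polish space $\mathbb{R}^{\mathbb{N}}$. The classical Skorokhod theorem then furnishes a subsequence $(n_k)$, a probability space $(\Omega,\F,P)$ and measurable $Z_k,Z:\Omega\to\mathbb{R}^{\mathbb{N}}$ with $\tilde Q_{n_k}=P\circ Z_k^{-1}$, $\tilde Q=P\circ Z^{-1}$, and $Z_k(\omega)\to Z(\omega)$ almost surely; modifying on a $P$-null set yields pointwise convergence for every $\omega\in\Omega$. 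To return to $\mathbb{D}$, the Lusin-Souslin theorem (applicable because $\mathbb{D}$ is Souslin and $\Phi$ is a continuous injection) shows that $\Phi(\mathbb{D})$ is universally measurable in $\mathbb{R}^{\mathbb{N}}$ and that $\Phi^{-1}:\Phi(\mathbb{D})\to\mathbb{D}$ is Borel. On the full-measure set where $Z_k(\omega),Z(\omega)\in\Phi(\mathbb{D})$ for all $k$, I would define $Y_k(\omega):=\Phi^{-1}(Z_k(\omega))$ and $Y(\omega):=\Phi^{-1}(Z(\omega))$, extending by a fixed $\omega_0\in\mathbb{D}$ off this set.

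The remaining step is to verify~\eqref{eq:as}. By construction, $\pi_m(Y_k(\omega))\to\pi_m(Y(\omega))$ for every $\omega$ and every $m$, so Jakubowski's property promotes this to $Y_k(\omega)\to Y(\omega)$ in the topology of $\mathbb{D}$, and continuity of $f\in\mathbb{C}_b(\mathbb{D})$ then yields $f(Y_k(\omega))\to f(Y(\omega))$. The main obstacle is the pull-back: one must justify both the Borel measurability of $\Phi^{-1}$ on $\Phi(\mathbb{D})$ (by Lusin-Souslin, using the Souslin hypothesis on $\mathbb{D}$) and that the countable family of Property~\ref{separatingfamily} is strong enough to detect topological convergence in $\mathbb{D}$ rather than merely to separate its points --- both being precisely the content of Jakubowski's property.
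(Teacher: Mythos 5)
Your reduction breaks down exactly at the step you flag as the ``main obstacle'': you read Property~\ref{separatingfamily} as saying that the countable family $\{\pi_m\}$ \emph{detects} convergence, i.e.\ that $\pi_m(\omega^k)\to\pi_m(\omega)$ for all $m$ forces $\omega^k\to\omega$, so that $\Phi$ is a sequential homeomorphism onto its image. Property~\ref{separatingfamily} asserts only that the family \emph{separates points}, and the stronger reading is false for the families actually in play. With the separating family \eqref{eq:separ} on $\mathbb{D}([0,1];\mathbb{R})$, the paths $\omega_n(t)=\sin(2\pi nt)$ satisfy $\frac1r\int_q^{q+r}\omega_n(t)\,dt\to 0$ for all rationals $q,r$ and $\omega_n(1)=0$, so $\Phi(\omega_n)\to\Phi(0)$ in $\mathbb{R}^{\mathbb{N}}$; yet $\omega_n$ does not converge to $0$ in Lebesgue measure, hence not in $MZ$, $S^*$ or $S$, and the bounded $MZ$-continuous function $f(\omega)=\int_0^1\arctan^2(\omega(t))\,dt$ witnesses the failure of \eqref{eq:as} along such paths. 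Consequently $Z_k(\omega)\to Z(\omega)$ in $\mathbb{R}^{\mathbb{N}}$ yields $f(Y_k(\omega))\to f(Y(\omega))$ only for those $f$ that factor continuously through $\Phi$, not for all $f\in\mathbb{C}_b(\mathbb{D})$ as the statement requires. The Lusin--Souslin measurability of $\Phi^{-1}$, which you also worry about, is the unproblematic part; it is the convergence transfer that has no proof.

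The missing ingredient is uniform tightness. The paper's proof simply invokes Jakubowski's subsequential Skorokhod representation, \cite{jakubowski2}[Theorem~1], whose hypotheses are Property~\ref{separatingfamily} \emph{plus} uniform tightness of the sequence; the latter is available here because on a completely regular Souslin space every weak${}^*$ convergent sequence of Radon probability measures is uniformly tight (the sequential Prokhorov property; cf.\ Proposition~\ref{prohoskoro}). Uniform tightness is precisely what rescues the embedding idea: a continuous injection restricted to a compact set is a homeomorphism onto its image, so on compact subsets of $\mathbb{D}$ the separating family does detect convergence, and Jakubowski's construction arranges the representing variables to lie, off a null set, eventually in common compact sets. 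That construction is genuinely different from ``push forward, apply the classical Skorokhod theorem on $\mathbb{R}^{\mathbb{N}}$, pull back'': the classical representation gives no control on whether $Z_k(\omega)$ stays in $\Phi(K)$ for a fixed compact $K\subset\mathbb{D}$, so compactness cannot be injected after the fact. To make your route work you would have to reproduce the compact-exhaustion argument of \cite{jakubowski2}, at which point you are proving that theorem rather than applying the Polish-space one.
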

\begin{proof}
The Euclidean space endowed with its usual inner product is a Hilbert space, so, by \cite[Theorem~1]{jakubowski2}, the existence of an a.s. convergent subsequence follows from Property~\ref{separatingfamily}. The convergence in \cite[Theorem~1]{jakubowski2} is the pointwise convergence in the topology of the underlying space, which, for a sequence in a completely regular space, is equivalent to the convergence~\eqref{eq:as}; cf. \eqref{convergence}. Moreover, modifying the $\mathbb{D}$-valued random variables $Y_k$ and $Y$, given by \cite[Theorem~1]{jakubowski2}, on a set of measure zero does not affect on their weak${}^*$ convergence, so, their almost sure convergence can be strengthened to the pointwise convergence.
\end{proof}

In particular, by Proposition~\ref{prohoskoro2}, every element of $\P(\mathbb{D})$ can be regarded as a law of some $\mathbb{D}$-valued random variable. Complementarily, any such random variable induces a probability measure on $\mathbb{D}$.

\subsection{Stability and tightness}
\label{sec:tightstab}

In this section, we cover the required stability and tightness results. We present the required multi-dimensional infinite horizon extensions of the stability results of Meyer and Zheng \cite{meyerzheng}, Jakubowski, M{\' e}min and Pages \cite{jakubowski6}, and Lowther \cite{lowther2009}, for the right-continuous version of the raw canonical filtration.

\subsubsection{Stability}
\label{sec:stab}

\emph{Under Assumption~\ref{ass:main}, it suffices to establish the required stability results for the Meyer-Zheng topology}; see Appendix~\ref{sec:mz}. The required stability results are classical and thoroughly studied in the aforementioned works \cite{meyerzheng}, \cite{jakubowski6} and \cite{jakubowski} for scalar-valued processes. We demonstrate that, after some slight modifications, they are true in the present setting. We utilize the following multi-dimensional extension of \cite[Threorem~5]{meyerzheng}, provided by Jakubowski's subsequential Skorokhod's representation theorem.

\begin{lemma} (Jakubowski)
\label{findim}
If $(Q_n)_{n\in\mathbb{N}}$ is a sequence converging in the weak${}^*$ topology to $Q$ in $\P(\mathbb{D}(I;\mathbb{R}^d))$, then there exists a subsequence $(Q_{n_k})_{k\in\mathbb{N}}$ and a set $L\subset I$ of full Lebesgue measure such that $T\in L$, if $I=[0,T]$, and
\begin{equation}
\label{findimeq}
Q_n\circ X^{-1}_F\rightarrow_{w^*} Q\circ X^{-1}_F,\text{ as }n\rightarrow\infty,
\end{equation}
for every finite subset $F$ of $L$. In particular, there exists a (countable) dense set $D\subset I$ such that $T\in D$, if $I=[0,T]$, and \eqref{findimeq} is true, for every finite subset $F$ of $D$.
\end{lemma}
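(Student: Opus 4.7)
The plan is to invoke Jakubowski's subsequential Skorokhod representation (Proposition~\ref{prohoskoro2}) to replace weak${}^*$ convergence of the $Q_n$ by \emph{pointwise} convergence in $\mathbb{D}$ of a sequence of $\mathbb{D}$-valued random variables on one probability space, and then to use a Fubini argument together with a diagonal extraction to transfer this pathwise convergence in $\mathbb{D}$ into pointwise-in-time convergence in probability at Lebesgue-almost every time. Joint convergence of the finite-dimensional distributions at such times is then immediate.

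Concretely, first I would extract, via Proposition~\ref{prohoskoro2}, a subsequence (still denoted $(Q_k)$) and $\mathbb{D}$-valued random variables $Y_k,Y$ on some $(\Omega,\F,P)$ such that $Q_k=P\circ Y_k^{-1}$, $Q=P\circ Y^{-1}$ and $Y_k(\omega)\to Y(\omega)$ in the topology of $\mathbb{D}$ for every $\omega$. Since by Assumption~\ref{ass:main} the topology on $\mathbb{D}$ is stronger than the Meyer-Zheng topology, this pathwise convergence entails, for every $\omega$ and every $\varepsilon>0$,
\[
\int_I \one_{\{\|Y_k(\omega)(t)-Y(\omega)(t)\|_\infty>\varepsilon\}}\,\lambda(dt)\longrightarrow 0,
\]
where $\lambda$ is any finite reference measure on $I$ equivalent to Lebesgue measure on compacts (e.g.\ $\lambda=dt$ if $I=[0,T]$ and $\lambda=e^{-t}\,dt$ if $I=[0,\infty[$).

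Integrating over $\Omega$ and swapping order by Fubini (the integrand is bounded by $1$ and $\lambda(I)<\infty$) yields $\int_I P\{\|Y_k(t)-Y(t)\|_\infty>\varepsilon\}\,\lambda(dt)\to 0$ for every $\varepsilon>0$. A diagonal argument over $\varepsilon=1/m$, $m\in\mathbb{N}$, now furnishes a further subsequence (still relabelled) and a set $L\subset I$ of full Lebesgue measure such that $Y_k(t)\to Y(t)$ in $P$-probability for every $t\in L$. Componentwise convergence in probability at any finite $F=\{t_1,\dots,t_m\}\subset L$ gives joint convergence in probability, hence convergence in distribution, so $Q_k\circ\pi_F^{-1}\to Q\circ\pi_F^{-1}$ weak${}^*$ on $\mathbb{R}^{dm}$.

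When $I=[0,T]$, the point $T$ is adjoined to $L$ using (sequential) continuity of the terminal-time evaluation in the ambient topology (cf.\ Example~\ref{contexamples}~(b)), which directly yields $Q_n\circ\pi_T^{-1}\to Q\circ\pi_T^{-1}$ from $Q_n\to Q$. Any countable dense subset $D$ of the full-measure (hence dense) set $L$ then satisfies the last assertion. The one genuinely non-trivial step is the transfer from pathwise convergence in $\mathbb{D}$ to convergence in Lebesgue measure of the coordinates; this is precisely where Assumption~\ref{ass:main} is used in an essential way, and the rest is a routine Fubini and diagonal extraction.
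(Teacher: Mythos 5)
Your proof is correct and follows the same overall architecture as the paper's: invoke the subsequential Skorokhod representation (Proposition~\ref{prohoskoro2}) to get pointwise convergence $Y_k(\omega)\to Y(\omega)$ in a topology dominating $MZ$, transfer this to convergence of the coordinates at Lebesgue-almost every time, and read off convergence of the finite-dimensional distributions; the terminal time and the dense set $D$ are handled identically. Where you diverge is the middle step. The paper applies Lemma~\ref{lem:lebesgue} directly to the paths $Y_k(\omega)$ and asserts a single subsequence and a single full-measure set $L$ with $Y_{k_m,t}(\omega)\to Y_t(\omega)$ for \emph{every} $(t,\omega)\in L\times\Omega$; as stated, Lemma~\ref{lem:lebesgue} is a statement about one deterministic $MZ$-convergent sequence of paths, so applying it $\omega$ by $\omega$ would a priori produce $\omega$-dependent subsequences and null sets, and the uniformization over $\omega$ is left implicit. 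Your Fubini-plus-diagonal argument supplies exactly this missing uniformization: converting pathwise convergence in $\lambda$-measure into $L^1(\lambda)$-convergence of $t\mapsto P\{\|Y_k(t)-Y(t)\|_\infty>\varepsilon\}$ and extracting one subsequence gives a single set $L$ on which $Y_k(t)\to Y(t)$ in $P$-probability, which is weaker than the paper's claimed everywhere-pointwise convergence but is all that is needed for the finite-dimensional distributions. In short, your route buys a cleaner and fully justified uniformization at the cost of a marginally weaker (but sufficient) mode of convergence, and it arguably patches a small gap in the paper's own argument. One cosmetic point: when $I=[0,T]$, what you need for finite subsets $F\ni T$ is \emph{joint} convergence, which you do in fact obtain because $Y_k(T)\to Y(T)$ pointwise on $\Omega$ by continuity of the terminal evaluation; the remark that $Q_n\circ\pi_T^{-1}\to Q\circ\pi_T^{-1}$ follows directly from $Q_n\to Q$ addresses only the marginal and should not be mistaken for the full claim.
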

\begin{proof}

By Proposition~\ref{prohoskoro2}, we find a subsequence $(Q_{n_k})$, $k\in\mathbb{N}$, and $\mathbb{D}$-valued random variables $(Y_k)_{k\in\mathbb{N}}$ and $Y$ on some $(\Omega,\F,P)$ such that $Q_{n_k}=P\circ Y_k^{-1}$, $k\in\mathbb{N}$, $Q=P\circ Y^{-1}$ and $Y_k(\omega)\rightarrow_{MZ} Y(\omega)$, for every $\omega\in\Omega$, as $k\rightarrow\infty$; since the topology $MZ$ is metrizable, \eqref{eq:as} is equivalent to $\rightarrow_{MZ}$. By Lemma~\ref{lem:lebesgue}, there exists a subsequence $Y_{k_m}(\omega)$, $m\in\mathbb{N}$, and a set $L$ of full Lebesgue such that $T\in L$, if $I=[0,T]$, and $Y_{k_m,t}(\omega)\rightarrow Y_t(\omega)$, for every $(t,\omega)\in L\times \Omega$, as $m\rightarrow\infty$. Hence, the finite dimensional distributions of the process $(Y_{k_m,t})_{t\in L}$ converge to those of $(Y_t)_{t\in L}$. The complement of the set $L$ is a $\lambda$-null set. Thus, the set $L$ contains a (countable) dense set $D$ such that $T\in D$, for $I=[0,T]$; cf. Definition~\ref{def:mzinf}.
\end{proof}

In Proposition~\ref{pro:semilimit}, we show that the required part of \cite[Theorem~2.1]{jakubowski6}, which is an extension of \cite[Theorem~2]{stricker2} for a right-continuous canonical filtration, is true on a multi-dimensional Skorokhod space.

\begin{proposition}
\label{pro:semilimit}
Let $(Q_n)_{n\in\mathbb{N}}$ be a sequence of semimartingale measures satisfying the condition \eqref{ut} and converging in the weak${}^*$ topology to $Q$. Then, the weak${}^*$ limit $Q$ is a semimartingale measure.
\end{proposition}
\begin{proof}
The proof is essentially a combination of \cite[Lemma~1.1~and~1.3]{jakubowski6}. By Lemma~\ref{findim}, there exists a subsequence $(Q_{n_k})_{k\in\mathbb{N}}$ and a countable dense set $D\subset I$ such that $T\in D$, if $I=[0,T]$, and $(Q_{n_k})_{k\in\mathbb{N}}$ converges to $Q$ in finite dimensional distributions on the set $D$. For every finite collection $t_1<\dots<t_j$ \emph{in} $D$, let $\A_{t_1,\dots,t_j}$ denote the family of continuity sets of the marginal law of $Q$ on $t_1<\dots<t_j$, i.e., $\A_{t_1,\dots,t_j}$ consists of Borel sets $B\in\otimes_{i\leq j}\B(\mathbb{R}^{d})$ for which $Q\circ X^{-1}_{t_1,\dots,t_j}[\partial B]=0$, where $\partial B$ denotes the (Euclidean) topological boundary of $B$ on $\mathbb{R}^{d\times j}$. Following \cite{jakubowski6}, we introduce an auxiliary class $\J(D)$ of integrands, determined by the weak${}^*$ limit $Q$ and the dense set $D$, that are of the form
\[
J=\sum_{i=1}^d\sum_{k=1}^{n(i)} J_{t^i_{k-1}}\one_{(t^i_{k-1},t^i_k]},\ n(i)\in\mathbb{N},\ i\leq d,
\]
where every $t^i_0<t^i_1<\dots<t^i_{n(i)}$ is a finite collection of elements of $D$ and every $J_{t^i_{k-1}}$ is a finite linear combination of indicator functions of the continuity sets of the marginal law of $Q$ on $s_1<\dots<s_j\leq t^i_{k-1}$, $s_1,\dots,s_j\in D$, embedded on $\mathbb{D}(I;\mathbb{R}^d)$ and bounded by $1$ in absolute value, i.e., $|J_{t^i_{k-1}}|\leq 1$ and each $J_{t^i_{k-1}}$ is of the form
\[
J_{t^i_{k-1}}=\sum_{\ell=1}^{p}\alpha^{\ell}\one_{A^{\ell}\circ X^{-1}_{s_1,\dots,s_{j}}},\ s_{j}\leq t^i_{k-1},\ \alpha^{\ell}\in\mathbb{R},\ A^{\ell}\in\A_{s_1,\dots,s_{j}},
\]
for some elements $s_1<\dots<s_{j}$ of $D$ and $j$ and $p$ finite, for every $i\leq d$, for every $k\leq n$. Now, since $(Q_{n_k})_{k\in\mathbb{N}}$ is converging to $Q$ in finite dimensional distributions on the set $D$, by the vectorial Portmanteau's lemma, see e.g. \cite[Lemma~2.2]{vaart}, we have
\begin{equation}
\label{eq:portbound}
\begin{split}
Q[|(J\bullet X)_t|> c]&=Q\circ X_D^{-1}[|(J\bullet X)_t\circ X_D|> c]\\
&\leq\liminf_{k\rightarrow\infty}Q_{n_k}\circ X_D^{-1}[|(J\bullet X)_t\circ X_D|> c]\\
&=\liminf_{k\rightarrow\infty}Q_{n_k}[|(J\bullet X)_t|> c],\ c>0,\ t\in I,
\end{split}
\end{equation}
where $J\in\J(D)\subset \E(Q^{n_k})$, for all $k\in\mathbb{N}$. Due to the condition~\eqref{ut}, for every $t\in I$, the term on the last line, in \eqref{eq:portbound}, tends to zero, uniformly over $\J(D)$, as $c\rightarrow\infty$, i.e., the family $\{(J\bullet X)_t:J\in\J(D)\}$, is $Q$-tight, i.e, bounded in probability $Q$, for every $t\in I$. The topology of the convergence in probability is metrizable, so, for every $t\in I$, the sets, that are contained in the (sequential) closure of $\{(J\bullet X)_t:J\in\J(D)\}$, are bounded, which, in particular, entails that the set $\{(H\bullet X)_t:H\in\E(Q)\}$ is bounded in probability $Q$. Indeed, we will show this, by adapting a sequence of approximation arguments from \cite{jakubowski6}. First, since $D$ is dense in $I$ and contains $T$, for every $t_0< t_1<\cdots< t_n$ in $I$, $n\in\mathbb{N}$, there exists $t_0^k\leq t_1^k\leq\cdots\leq t_n^k$ in $D$, $k\in\mathbb{N}$, such that $t_j<t_j^k$, for every $j\leq n$, for every $k\geq 1$, and $t_j\downarrow t_j^k$, for every $j\leq n$, as $k\rightarrow\infty$; we allow $t_j^k=T$, if $t_j=T$. Since $d$ and $n$ are finite, by the right-continuity of $X=(X^1,\dots,X^d)$, we have
\begin{equation}
\label{eq:denseright}
X^i_{t_j^k}\rightarrow X^i_{t_j}\text{, uniformly over }i=1,\dots,d\text{ and }j=1,\dots,n,\text{ as }k\rightarrow\infty.
\end{equation}
Secondly, for every $i\leq d$, for every $j\leq n$, for every $t_j<T$, any $\F_{t_j}$-measurable $|H^i_{t_j}|\leq 1$ is $\F^o_{t_j^k-}$-measurable, for all $k\geq 1$, and can therefore be expressed as an uniform limit of \emph{simple} $\F^o_{t_j^k-}$-measurable functions bounded by $1$ in absolute value, for every $k\geq 1$, i.e., for every $i\leq d$, for every $j\leq n$, for every $k\geq 1$, there exist functions $|S^{i,\ell}_{t^k_j}|\leq 1$, $\ell\in\mathbb{N}$, such that each $S^{i,\ell}_{t^k_j}$ is of the form
\begin{equation}
\label{eq:simple0}
S^{i,\ell}_{t^k_j}=\sum_{h=1}^{q(\ell)} \beta^{h,\ell}_{i,j,k}\one_{F^{h,\ell}_{i,j,k}},\ \beta^{h,\ell}_{i,j,k}\in\mathbb{R},\ F^{h,\ell}_{i,j,k}\in \F^o_{t_j^k-},\ 1\leq q(\ell)<\infty,
\end{equation}
and we have
\begin{equation}
\label{eq:simple}
\|H^i_{t^k_j}-S^{i,\ell}_{t^k_j}\|_\infty\rightarrow 0\text{ as }\ell\rightarrow\infty.
\end{equation}
Further, since each $\A_{t_1,\dots,t_j}$ is an algebra generating $\otimes_{i\leq j}\B(\mathbb{R}^{d})$ on $\mathbb{R}^{d\times j}$, and the finite unions of the cylindrical sets $X^{-1}_{t_1,\dots,t_j}(\otimes_{i\leq j}\B\left(\mathbb{R}^{d})\right)$ form an algebra generating the canonical $\sigma$-algebra on $\mathbb{D}(I;\mathbb{R}^d)$, for every $0<t\in I$, the family
\[
\A^o_{t-}=\left\{\bigcup_{k=1}^{n}X^{-1}_{t^k_1,\dots,t^k_{j(k)}}(A_k):A_k\in\A_{t^k_1,\dots,t^k_{j(k)}},\ t^k_1<\cdots<t^k_{j(k)}<t,\  j(k),n\in\mathbb{N}  \right\}
\]
is an algebra generating $\F^o_{t-}=\sigma(X_u:u<t)$ on $\mathbb{D}(I;\mathbb{R}^d)$; cf. Corollary~\ref{minusfilt}. Thus, for every $F^{h,\ell}_{i,j,k}\in \F^o_{t_j^k-}$ in \eqref{eq:simple0}, there exists a sequence $(A^{h,\ell,m}_{i,j,k})_{m\in\mathbb{N}}$ in $\A^o_{t_j^k-}$ such that
\begin{equation}
\label{eq:algebra}
\one_{A^{h,\ell,m}_{i,j,k}}\rightarrow_{Q}\one_{F^{h,\ell}_{i,j,k}},\text{ as }m\rightarrow\infty.
\end{equation}
Finally, by combining the approximations \eqref{eq:denseright} and \eqref{eq:simple}, and in \eqref{eq:simple}, invoking the approximation \eqref{eq:algebra} in the sums \eqref{eq:simple0}, we conclude that, for every $H\in\E(Q)$, there exists a sequence $(J_n)_{n\in\mathbb{N}}$, $n=n(k,\ell,m)$, of elements in $\J(Q)$ such that, for every $t\in I$, we have
\[
(J_n\bullet X)_t=\sum_{i=1}^d(J^i_n\bullet X^i)_t\rightarrow_{Q}\sum_{i=1}^d(H^i\bullet X^i)_t=(H\bullet X)_t,\text{ as }k\wedge \ell\wedge m\rightarrow\infty.
\]
Thus, the family of simple integrals $\{(H\bullet X)_t:H\in\E(Q)\}$ is contained in the closure of $\{(J\bullet X)_t:J\in\J(D)\}$, for every $t\in I$, and, by \eqref{eq:portbound}, the weak${}^*$ limit $Q$ is an $(\F_t)_{t\in I}$-semimartingale measure; and, consequently, an $(\F^o_t)_{t\in I}$-semimartingale measure; see e.g. \cite[Theorem~II.4]{protter}.

\end{proof}

The following Proposition~\ref{pro:quasilimit} is essentially \cite[Theorem~4]{meyerzheng}.

\begin{proposition}
\label{pro:quasilimit}
Let $(Q_n)_{n\in\mathbb{N}}$ be a sequence of quasimartingale measures satisfying the condition \eqref{ub} and converging in the weak${}^*$ topology to $Q$. Then, the weak${}^*$ limit $Q$ is a quasimartingale measure.
\end{proposition}
\begin{proof}
Let $i\leq d$ be fixed. We adapt the proof of \cite[Theorem~4]{meyerzheng} and show that the coordinate process $X^i$ is a quasimartingale under $Q$ on $\mathbb{D}(I;\mathbb{R}^d)$. We have
\[
E_{Q}\left[\frac{1}{\varepsilon}\int_0^\varepsilon|X^i_{(t+u)\wedge T}|du\right]\leq \liminf_{n\rightarrow\infty}E_{Q_n}\left[\frac{1}{\varepsilon}\int_0^\varepsilon|X^i_{(t+u)\wedge T}|du\right]\leq b^i,
\]
for every $t\in I$, for every $\varepsilon>0$, where $b^i:=\liminf_{n\rightarrow\infty}\sup_{t\in I}E_{Q_n}[|X^i_t|]<\infty$, by the condition \eqref{ub}. Thus, by Fatou's lemma, we get
\begin{equation}
\label{eq:intbound}
E_Q[|X^i_t|]\leq\liminf_{\varepsilon\rightarrow 0}E_{Q}\left[\frac{1}{\varepsilon}\int_0^\varepsilon |X^i_{(s+u)\wedge T}|du\right]<\infty,
\end{equation}
for every $t\in I$. The truncated coordinate process, that is,
\[
X^{i,c}:=(-c)\vee (X^i\wedge c)=(-c)\vee X^i-(X^i-c)^+,\ c>0,
\]
is a difference of two convex $1$-Lipschitz functions of $X^i$, for every $c>0$, so, we have
\begin{equation}
\label{eq:yoeurp}
\text{Var}^{Q_n}_t(X^{i,c})\leq 4\text{Var}^{Q_n}_t(X^i),\ n\in\mathbb{N},\ t\in I;
\end{equation}
see e.g. \cite{stricker4}. Let $0=t_0<t_1<\dots<t_k=t$ and $|f_j|\leq 1$, $j<k$, be \emph{continuous} $\F^o_{t_j-}$-measurable functions. By \eqref{eq:yoeurp}, we have
\begin{equation}
\label{eq:start}
E_{Q_n}\left[\sum_{j=1}^kf_{j-1}(X)(X^{i,c}_{(u+t_{j})\wedge T}-X^{i,c}_{(u+t_{j-1})\wedge T})\right]\leq 4\text{Var}^{Q_n}_t(X^i),\ n\in\mathbb{N},
\end{equation}
so, by Fubini's theorem, for every $n\in\mathbb{N}$, for every $\varepsilon>0$, we get
\[
E_{Q_n}\left[\frac{1}{\varepsilon}\int_0^\varepsilon\left( \sum_{j=1}^k f_{j-1}(X)(X^{i,c}_{(u+t_{j})\wedge T}-X^{i,c}_{(u+t_{j-1})\wedge T})\right)du\right]\leq 4\text{Var}^{Q_n}_t(X^i).
\]
The mappings
\[
F^\varepsilon(X):=\frac{1}{\varepsilon}\int_0^\varepsilon\left( \sum_{j=1}^k f_{j-1}(X)\left(X^{i,c}_{(u+t_{j})\wedge T}-X^{i,c}_{(u+t_{j-1})\wedge T}\right)\right)du,\ \varepsilon>0,
\]
are lower semicontinuous and bounded from below, see \eqref{inmeas} and Lemma~\ref{lem:lsc}, so, we have
\begin{equation}
\label{disp}
E_{Q}\left[\frac{1}{\varepsilon}\int_0^\varepsilon\left( \sum_{j=1}^kf_{j-1}(X)\left(X^{i,c}_{(u+t_{j})\wedge T}-X^{i,c}_{(u+t_{j-1})\wedge T}\right)\right)du\right]\leq 4 v^i,\ \varepsilon>0,
\end{equation}
where $v^i:=\liminf_{n\rightarrow\infty}\sup_{t\in I}\text{Var}^{Q_n}_t(X^i)<\infty$, by the assumption \eqref{ub}. Due to \eqref{eq:intbound}, letting $\varepsilon\rightarrow 0$ and then $c\rightarrow\infty$ in \eqref{disp}, by the right-continuity and the monotone convergence, respectively, we get
\begin{equation}
\label{disp2}
E_{Q}\left[\sum_{j=1}^kf_{j-1}(X)\left(X^{i}_{t_{j}}-X^{i}_{t_{j-1}}\right)\right]\leq 4 v^i,
\end{equation}
for all $\F^o_{t_j-}$-measurable continuous functions $|f_j|\leq 1$, $j<k$. Furthermore, by choosing $f_j(X)=f(X_{t_j+u})$ preceding \eqref{eq:start}, for a continuous function $|f|\leq 1$ on $\mathbb{R}^d$, we conclude that the inequality \eqref{disp2} is true for a family of continuous functions that, for every $j<k$, generates the $\sigma$-algebra $\F^o_{t_j}$; cf. Corollary~\ref{minusfilt}. Thus, by the standard $L^1$-approximation via Lusin's theorem and Tietze's extension theorem, for any $\F^o_{t_j}$-measurable $|H_{t_j}|\leq 1$ in $L^\infty(Q)$, for every $j<k$, the exists a sequence $(f^n_j)_{n\in\mathbb{N}}$, $|f^n_j|\leq 1$, of functions satisfying \eqref{disp2} such that $f^n_j\rightarrow H_{t_j}$ in $L^1(Q)$, as $n\rightarrow\infty$; see e.g. \cite{feldman} and \cite[2.1.8.]{engelking}. Thus, the inequality \eqref{disp2} is true for all $\F^o_{t_j}$-measurable functions $|H_{t_j}|\leq 1$ in $L^\infty(Q)$, so, the process $X$ is an $(\F^o_{t})_{t\in I}$-quasimartingale on $(\mathbb{D}(I),\F_T,Q)$, see \cite[B.~App.~II~(3.5)]{dellacheriemeyer}, which, by Rao's decomposition theorem, is a necessary and sufficient condition for the process $X$ to be decomposable to a difference $X=Y-Z$ of two c{\` a}dl{\` a}g $(\F^o_{t})_{t\in I}$-supermartingales $Y$ and $Z$ on $(\mathbb{D}(I),\F_T,Q)$; see \cite[Theorem~8.13]{hewangyan}. On the other hand, by F{\" o}llmer's lemma, $Y$ and $Z$ are $(\F_{t})_{t\in I}$-supermartingales, see \cite[Theorem~2.46]{hewangyan}, so, by Rao's decomposition theorem, the process $X$ is an $(\F_{t})_{t\in I}$-quasimartingale on $(\mathbb{D}(I),\F_T,Q)$.

\end{proof}

The following Proposition~\ref{pro:superlimit} is essentially \cite[Theorem~11]{meyerzheng}.

\begin{proposition}
\label{pro:superlimit}
Let $(Q_n)_{n\in\mathbb{N}}$ be a sequence of supermartingale measures satisfying the condition \eqref{ui} and converging in the weak${}^*$ topology to $Q$. Then, the weak${}^*$ limit $Q$ is a supermartingale measure.
\end{proposition}
\begin{proof}
We adapt the proof of \cite[Theorem~11]{meyerzheng} and show that each coordinate process $X^i$, $i\leq d$, is a supermartingale under $Q$ on $\mathbb{D}(I;\mathbb{R}^d)$. We have $E_Q[|X_t|]<\infty$, for every $t\in I$; cf. \eqref{eq:intbound}. Moreover, by Lemma~\ref{findim}, there exists a subsequence $(Q_{n_k})_{k\in\mathbb{N}}$ and a countable dense set $D\subset I$ such that $T\in D$, if $I=[0,T]$, and $(Q_{n_k})_{k\in\mathbb{N}}$ converges to $Q$ in finite dimensional distributions on the set $D$. Let $X^{i,c}$ denote the coordinate process $X^i$ truncated \emph{from above} at $c>0$, i.e.,
\[
X^{i,c}:=X^i\wedge c,\ c>0.
\]
By the condition~\eqref{ui} and the fact that each $Q_{n_k}$ is a supermartingale measure for $X^{i,c}$, for every $c>0$, we have
\[
E_Q[f(X)(X^{i,c}_t-X^{i,c}_s)]\leq\liminf_{k\rightarrow\infty}E_{Q_{n_k}}[f(X)(X^{i,c}_t-X^{i,c}_s)]\leq 0,\ s<t,\ s,t\in D,
\]
where
\[
f(X):=f_1(X_{t_1})f_2(X_{t_2})\cdots f_n(X_{t_n}),\ t_j\in D,\ f_j\in\mathbb{C}_b(\mathbb{R}^d),\ j\leq n;
\]
see e.g. \cite[Theorem~2.20]{vaart}. Consequently, by Corollary~\ref{minusfilt}, we have
\begin{equation}
\label{eq:supprop}
E_Q[\one_F(X)(X^{i,c}_t-X^{i,c}_s)]\leq 0,\ c>0,
\end{equation}
for every $s<t$ in $D$ and $F\in\F^o_{s-}$. Letting $c\rightarrow\infty$ in \eqref{eq:supprop}, by the monotone convergence theorem, we get the same inequality for the coordinate process $X^i$. By F{\" o}llmer's lemma, the inequality extends immediately to the whole $I$, and further, for $F\in\F^o_{s+}$. Indeed, we have
\begin{equation}
\label{eq:minusplus}
E_Q[\one_F(X)(X^i_t-X^i_s)]=\lim_{n\rightarrow\infty}E_Q[\one_F(X)(X^i_t-X^i_{s+1/n})]\leq 0,
\end{equation}
for every $F\in\F^o_{s+}$; cf. \cite[Theorem~2.44]{hewangyan}.
\end{proof}

For the sake of completeness, we provide the following Proposition~\ref{pro:markov}; the assertion~(a) is the classical Kolmogorov's criteria for the almost sure (H{\" o}lder) continuity and the assertion~(b) is essentially \cite[Proposition~6]{hirsch}; see also \cite[Lemma~4.5]{lowther2009}.

\begin{proposition}\label{pro:markov} 
Let $(Q_n)_{n\in\mathbb{N}}$ be a sequence of probability measures converging in the weak${}^*$ topology to $Q$. Then, we have the following:
\begin{enumerate}[(a)]
\item If the sequence $(Q_n)_{n\in\mathbb{N}}$ is $C$-tight, then the limit $Q$ is $C$-tight.
\item If each $Q_n$ is Lipschitz-Markov, then the limit $Q$ is Lipschitz-Markov.
\end{enumerate}
\end{proposition}
\begin{proof}
(a) By \cite[Lemma~15.49]{hewangyan}, the $C$-tighness of the sequence $(Q_n)_{n\in\mathbb{N}}$ implies the convergence along a subsequence in the weak${}^*$ topology of the Skorokhod's $J^1$-topology, and a fortiori in any weaker topology, to a law of a continuous process, which is the limit $Q$; cf. Proposition~\ref{pro:hierarchy}.

(b) Let $s<t$ in $I$ be fixed and take a bounded Lipschitz continuous function $g:\mathbb{R}^d\rightarrow\mathbb{R}$ with a Lipschitz constant $L(g)\leq 1$. For each $n\in\mathbb{N}$, there exists bounded Lipschitz continuous function $f_n:\mathbb{R}^d\rightarrow\mathbb{R}$ such that
\begin{equation}
\label{eq:aux0}
f_n(X_s)=E_{Q^n}[g(X_t)\mid\F^o_s]\quad Q\textnormal{-a.s.}.
\end{equation}
Further, we can take it granted that $L(f_n)\leq 1$ and $\|f_n\|_\infty\leq \|g\|_\infty<\infty$, for all $n\in\mathbb{N}$. Thus, by the Arzel{\' a}-Ascoli theorem, there exists a subsequence $(f_{n_k})_{k\in\mathbb{N}}$ and a bounded Lipschitz continuous function $f:\mathbb{R}^d\rightarrow\mathbb{R}$ with $L(f)\leq 1$ such that $f_{n_k}$ converges to $f$ uniformly on compacta, as $k\rightarrow\infty$. Further, by Lemma~\ref{findim}, there exists a further subsequence of $(Q_{n_k})_{k\in\mathbb{N}}$, that again we denote by $(Q_{n_k})_{k\in\mathbb{N}}$, converging in the finite-dimensional distributions to $Q$ on a dense set $D\subset I$ such that $T\in D$, if $I=[0,T]$. Let $i\in\mathbb{N}$ and $0\leq s_1<s_2<\cdots< {s_i}\leq s<t$ in $D$ and take a bounded continuous compactly supported $\alpha:\mathbb{R}^d\rightarrow\mathbb{R}$ and a bounded continuous $\beta:\mathbb{R}^{d\times i}\rightarrow\mathbb{R}$. By \eqref{eq:aux0}, we have
\begin{equation}
\label{eq:aux}
E_{Q_{n_k}}[(f_{n_k}(X_s)-g(X_t))\alpha(X_s)\beta(X_{s_1},\dots,X_{s_i})]=0,\quad \forall k\in\mathbb{N}.
\end{equation}
Since $f_{n_k}$ converges uniformly to $f$ in the compact support of each $\alpha$ and $Q_{n_k}$ converges to $Q$ in the finite dimensional distributions on the set $D$, as $k\rightarrow\infty$, from \eqref{eq:aux}, by the vectorial Portmanteau's lemma, see e.g. \cite[Lemma~2.2]{vaart}, we get
\begin{equation}
\label{eq:aux1}
E_{Q}[(f(X_s)-g(X_t))\alpha(X_s)\beta(X_{s_1},\dots,X_{s_i})]=0.
\end{equation}
The equality \eqref{eq:aux1} holds for all bounded continuous $\alpha$ with compact support, which yields
\begin{equation}
\label{eq:aux2}
E_{Q}[(f(X_s)-g(X_t))\beta(X_{s_1},\dots,X_{s_i})]=0,
\end{equation}
and further, the equality \eqref{eq:aux2} holds for all bounded continuous $\beta$, which yields
\begin{equation}
\label{eq:aux3}
E_{Q}[(f(X_s)-g(X_t))h(X)]=0,
\end{equation}
for every bounded $\F^o_s$-measurable function $h$, for every $s<t$ in $D$. Since $D$ is a dense subset of $I$ and $s\mapsto f(X_s)$ is bounded and right-continuous on $I$, by the bounded convergence theorem, the equality \eqref{eq:aux3} extends to the whole $I$, and further, for every bounded $\F_s$-measurable function $h$; cf. \eqref{eq:minusplus}.
\end{proof}

\subsubsection{Tightness}
\label{sec:tightness}

We say that a family $\Q$ of probability measures on $\left(\mathbb{D}(I;\mathbb{R}^d),\F_T\right)$ satisfies \emph{Jakubowski's uniform tightness criteria}, if we have
\[
\lim_{c\rightarrow\infty}\sup_{Q\in\Q}Q[\|X^{i,t}\|_\infty>c]=0\text{ and }\lim_{c\rightarrow\infty}\sup_{Q\in\Q}Q[N^{a,b}(X^{i,t})>c]=0,\ \forall a<b,\label{us} \tag{US}
\]
for every finite $t\in I$, for every $i\leq d$, where $X^{i,t}$ denotes the coordinate process $X^i$ \emph{restricted} on $[0,t]$; cf. Corollary~\ref{finalcrit}. It was shown in \cite{jakubowski} that a family of probability measures on $\left(\mathbb{D}([0,T];\mathbb{R}),\F_T\right)$, $T<\infty$, satisfies the condition \eqref{us} if and only if it is uniformly $S$-tight. In particular, we have the following hierarchy, cf. \eqref{eq:hierarchy},
\begin{equation}
\label{eq:hierarchy2}
\eqref{ut}\implies\textnormal{(US)}\implies\textnormal{(US${}^*$)},
\end{equation}
where (US${}^*$) stands for the uniform tightness in the $S^*$-topology; see Section~\ref{snot}.

The second implication in \eqref{eq:hierarchy2} is immediate from the definition of the $S^*$-topology; see Proposition~\ref{sproperties}~(i). The first implication in \eqref{eq:hierarchy2} follows from Proposition~\ref{semitight}, that is essentially the result of Stricker \cite[Theorem~2]{stricker}, which states that a sequence satisfying the condition \eqref{ut} admits a convergent subsequence and the limit law is a law of a semimartingale. Analogous results were obtained for the $S$-topology by Jakubowski in \cite[Theorem~4.1]{jakubowski}; see also \cite[Proposition~3.1]{jakubowski}.

\begin{proposition}
\label{semitight}
A family of semimartingale measures satisfying the condition \eqref{ut} satisfies the condition \eqref{us}.
\end{proposition}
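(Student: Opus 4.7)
The plan is to derive both requirements in \eqref{us} by exhibiting, for each event in question, an elementary predictable integrand $H\in\H(Q)$ whose stochastic integral is large on that event, and then to invoke \eqref{ut} directly. All arguments will be discrete (partition-based), with the jump to the continuous statement made at the end by right-continuity of the paths and monotone convergence.

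First I would handle the auxiliary fact that $\{X^i_0\}_{Q\in\Q}$ is uniformly tight: apply \eqref{ut} at $t=0$ with $H^i=\mathrm{sign}(X^i_0)\one_{\{0\}}\in\H(Q)$, so that $(H\circ X)_0=|X^i_0|$ and uniform tightness of $|X^i_0|$ follows. Next, for the supremum bound, fix finite $t\in I$, $i\leq d$, a constant $c>0$, and a finite partition $\pi=\{0=t_0<t_1<\cdots<t_n=t\}$. Let $\tau^+=\min\{k:X^i_{t_k}>c\}$ and $\tau^-=\min\{k:X^i_{t_k}<-c\}$ with the convention $\min\emptyset=\infty$. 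Both are $(\F_{t_k})$-stopping times, so $H^{i,+}=\one_{]0,t_{\tau^+\wedge n}]}=\sum_k\one_{\{\tau^+\geq k\}}\one_{]t_{k-1},t_k]}$ is an element of $\H(Q)$, and similarly $H^{i,-}$. A telescoping computation gives
\[
(H^{i,+}\circ X^i)_t=X^i_{t_{\tau^+\wedge n}}-X^i_0\geq c-X^i_0\quad\text{on }\{\tau^+\leq n\},
\]
and analogously for $H^{i,-}$. Hence
\[
Q\Bigl(\max_k|X^i_{t_k}|>c\Bigr)\leq Q\bigl(|(H^{i,+}\circ X)_t|>c/2\bigr)+Q\bigl(|(H^{i,-}\circ X)_t|>c/2\bigr)+2Q(|X^i_0|>c/2),
\]
and all three terms vanish as $c\rightarrow\infty$ uniformly in $Q\in\Q$ and in $\pi$ thanks to \eqref{ut} and the previous step. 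Taking $\pi$ along a refining sequence with union dense in $[0,t]$ and appealing to right-continuity of the canonical process yields $\lim_{c\rightarrow\infty}\sup_{Q\in\Q}Q(\|X^{i,t}\|_\infty>c)=0$.

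For the upcrossing estimate I would mimic the classical Dubins construction at the level of a partition $\pi$ as above. Set recursively $\sigma_1=\min\{k:X^i_{t_k}\leq a\}$, $\tau_1=\min\{k>\sigma_1:X^i_{t_k}\geq b\}$, $\sigma_{m+1}=\min\{k>\tau_m:X^i_{t_k}\leq a\}$, etc.; each $\sigma_m,\tau_m$ is an $(\F_{t_k})$-stopping time. Define
\[
H^i=\sum_{m\geq 1}\one_{]t_{\sigma_m}\wedge t,\,t_{\tau_m}\wedge t]}\in\H(Q),
\]
bounded by $1$ and adapted to $(\F_{t_{k-1}})$ at each grid step. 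A standard summation-by-parts gives
\[
(H^i\circ X^i)_t\geq(b-a)N^{a,b}_\pi(X^{i,t})-(a-X^i_t)^+\geq(b-a)N^{a,b}_\pi(X^{i,t})-|X^i_t|-|a|,
\]
the correction coming from a possibly incomplete final upcrossing. Hence, for any $M>0$,
\[
Q\bigl(N^{a,b}_\pi(X^{i,t})>c\bigr)\leq Q\bigl(|(H^i\circ X^i)_t|>(b-a)c-M-|a|\bigr)+Q(|X^i_t|>M).
\]
Choosing $M$ so that the second term is small by the already established bound on $\|X^{i,t}\|_\infty$ and then letting $c\rightarrow\infty$ to absorb the first term via \eqref{ut}, I obtain $\lim_{c\rightarrow\infty}\sup_{Q\in\Q}Q(N^{a,b}_\pi(X^{i,t})>c)=0$ uniformly in $\pi$. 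Passing from $N^{a,b}_\pi$ to $N^{a,b}=\sup_\pi N^{a,b}_\pi$ is done by monotone convergence along a refining sequence of partitions whose union is dense in $[0,t]$, again using right-continuity of $X^i$ to identify $\sup_\pi N^{a,b}_\pi(X^{i,t})$ with $N^{a,b}(X^{i,t})$.

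The main obstacle is purely technical: one must verify that the candidate strategies $H^{i,\pm}$ and $H^i$ defined through discrete stopping times are genuine elements of $\H(Q)$ in the sense of \eqref{eq:strats}, i.e.\ that each coefficient is $\F_{t_{k-1}}$-measurable and bounded by $1$, and one must justify that partition-based upcrossings converge upwards to the true upcrossing count along a suitably chosen dense refinement. Both points are standard consequences of the definition of stopping time with respect to the right-continuous filtration and of the càdlàg property of the coordinate processes, so that, modulo these bookkeeping steps, the implication \eqref{ut}$\Rightarrow$\eqref{us} reduces to the two explicit estimates displayed above.
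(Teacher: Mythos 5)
Your proof is correct and arrives at the same two estimates, but by a genuinely different technical route. The paper works with the continuous-time hitting times $\tau^{i,c}=\inf\{s:|X^{i,t}_s|>c\}$ and the recursively defined crossing times, approximates them from the right by stopping times taking finitely many values, and lets these approximations generate the elementary integrands; this is exactly why the paper insists on the right-continuous filtration $(\F_t)_{t\in I}$ in this proposition --- it is needed for those d\'ebuts to be stopping times. You instead fix a finite partition from the outset, use discrete first-entrance indices (which are stopping times for the raw filtration, with no right-continuity of the filtration required), obtain bounds that are uniform in the partition and in $Q$, and only then pass to a refining sequence with dense union via right-continuity of the paths. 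Each route buys something: the paper's gives the identity \eqref{eq:tigeq} directly at the level of the path supremum, while yours dispenses with the d\'ebut theorem and makes the uniformity over partitions explicit, at the small cost of treating $X_0$ separately (your integrands omit the $\one_{\{0\}}$ term, so $(H^{i,\pm}\circ X^i)_t=X^i_{t_{\tau^\pm\wedge n}}-X^i_0$, whereas the paper's $H^{n}=\one_{[0,\tau^{i,c}_n\wedge t]}$ yields $X^i_{\tau^{i,c}_n\wedge t}$ outright). The one step you leave implicit --- that the partition maxima and partition upcrossing counts increase to $\|X^{i,t}\|_\infty$ and $N^{a,b}(X^{i,t})$ along a refining sequence whose union is dense and contains $t$ --- is the standard c{\`a}dl{\`a}g fact; for the upcrossings one should add that any strict/non-strict mismatch at the levels $a,b$ is harmless because \eqref{us} is quantified over all $a<b$, a point the paper itself glosses over in \eqref{eq:tigeq2}.
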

\begin{proof}
Let $X^{i,t}$ denote the coordinate processes $X^i$ \emph{restricted} on $[0,t]$, for $i\leq d$ and $t<\infty$. Following Stricker \cite[Theorem~2]{stricker2}, we define a family of stopping times
\[
\tau^{i,c}=\inf\{s\in I:|X^{i,t}_s|>c\},\ i\leq d,\ c>0,
\]
and each $\tau^{i,c}$ is approximated from right with the sequence of the \emph{simple} stopping times
\begin{equation}
\label{eq:approx}
\tau^{i,c}_{n}=\min\{m/n :m\in\mathbb{N},\ \tau^{i,c}\leq m/n\},\ n\in\mathbb{N}.
\end{equation}
Since we are assuming a right-continuous filtration $(\F_t)_{t\in I}$ and $X^i$ is right-continuous, the hitting times $\tau^{i,c}$, and consequently, their approximations $\tau^{i,c}_{n}$ are indeed stopping times. Moreover, since each $\tau^{i,c}_{n}$ takes only finitely many values on $[0,t]$, every process $|H^n|\leq 1$ of the form
\begin{equation}
\label{eq:simeq}
H^{n}=\one_{[0,\tau^{i,c}_n\wedge t]},\ i\leq d,\ c>0,\ n\in\mathbb{N},\ t\in I,
\end{equation}
is an elementary predictable integrand; see \eqref{eq:strats}. Now, due to the right-continuity of $X^i$, by the bounded convergence theorem, for every $Q\in\Q$, we have
\begin{equation}
\label{eq:tigeq}
Q\left[ \|X^{i,t}\|_\infty >c\right]= Q\left[|(H^{n}\bullet X^i)_t| >c\ \forall n\in\mathbb{N}\right],
\end{equation}
for every $t\in I$, for every $c>0$. By the condition \eqref{ut}, the left-hand side in \eqref{eq:tigeq} tends to $0$, uniformly over $Q\in\Q$, for every $i\leq d$, for every $t\in I$, as $c\rightarrow\infty$.  Similarly, for $a<b$, we define, recursively, for all $k\in\mathbb{N}_0$, the stopping times
\[
\sigma^{i,a}_k=\inf\{s>\tau^{i,b}_{k-1}:|X^{i,t}_s|<a\},\ \tau^{i,b}_k=\inf\{s>\sigma^{i,a}_{k}:|X^{i,t}_s|>b\},\ \sigma^{i,a}_0=\tau^{i,b}_0=0,
\]
and the respective decreasing sequences $(\sigma^{i,a}_{k,n})_{n\in\mathbb{N}}$ and $(\tau^{i,b}_{k,n})_{n\in\mathbb{N}}$ of approximative stopping times, taking only finitely many values on finite intervals; cf. \eqref{eq:approx}. The processes $|H^{m,n}|\leq 1$, $m,n\in\mathbb{N}$, of the form
\[
H^{m,n}=\sum_{k=1}^m\one_{(\sigma^{i,a}_{k,n}\wedge t,\tau^{i,b}_{k,n}\wedge t]},
\]
are finite linear combinations of processes of the form \eqref{eq:simeq}, so, each process $|H^{m,n}|\leq 1$ is an elementary predictable integrand. Moreover, we have
\begin{equation}
\label{eq:tigeq2}
Q\left[ N^{a,b}(X^{i,t})>c\right]\leq Q\left[\lim_{m\rightarrow\infty}|(H^{m,n}\bullet X^i)_t| >a^++c(b-a)\ \forall n\in\mathbb{N}\right].
\end{equation}
By the condition \eqref{ut}, for every $a<b$, the right-hand side of \eqref{eq:tigeq2} tends to zero, uniformly over $Q\in\Q$, as $c\rightarrow 0$; cf. \eqref{eq:portbound}-\eqref{eq:denseright}. Thus, by Corollary~\ref{finalcrit}, the family $\Q$ satisfies the condition \eqref{us}.

\end{proof}

\subsection{The proofs of the main results}
\label{sec:theproofs}

In this section we provide the proofs for the results of Section~\ref{sec:maine} that we omitted there. We begin by proving Theorem~\ref{thm:semi} by invoking the results of Section~\ref{sec:weakstar} in conjunction with the stability and tightness result on the semimartingale property of Section~\ref{sec:tightstab}. Then, the rest of the results of Section~\ref{sec:maine} follow from the respective stability results of Section~\ref{sec:tightstab}.

\

\noindent
{\em{Proof of Theorem~\ref{thm:semi}}.} The condition \eqref{ut} is stronger than the condition ($\textnormal{US}{}^*$); cf. \eqref{eq:hierarchy2}. So, by Proposition~\ref{prohoskoro}, the family $\S$ is $\beta_0$-equicontinuous. Thus, by Corollary~\ref{cor:equi}, the closure of $\S$ is compact and sequentially compact in the weak${}^*$ topology; see Proposition~\ref{eqvcomp}. By Corollary~\ref{cor:weakseq}, the closure of $\S$ coincides with the sequential closure of $\S$. It remains to show that, every element in the sequential closure $[\S]_{seq}$ is a semimartingale measure. This particular fact is the statement of Proposition~\ref{pro:semilimit}.

\

\noindent
{\em{Proof of Corollary~\ref{cor:quasi}}.} The condition \eqref{ub} is weaker than the condition \eqref{ut}; see \eqref{eq:hierarchy}. By Proposition~\ref{pro:quasilimit}, the class of quasimartingale measures is stable in the weak${}^*$ convergence under the property~\eqref{ub}. Thus, Corollary~\ref{cor:quasi} follows from Theorem~\ref{thm:semi}.

\

\noindent
{\em{Proof of Corollary~\ref{cor:super}}.} The condition \eqref{ui} is weaker than the condition \eqref{ub}; see \eqref{eq:hierarchy}. By Proposition~\ref{pro:superlimit}, the class of supermartingale measures is stable in the weak${}^*$ convergence under the condition~\eqref{ui}. Thus, Corollary~\ref{cor:super} follows from Corollary~\ref{cor:quasi}.

\

\noindent
{\em{Proof of Lemma~\ref{lem:cm}}.} Since every sequence in the set is $C$-tight, by Proposition~\ref{pro:markov}~(a), every limit point in the sequential closure is $C$-tight. Thus, Lemma~\ref{lem:cm}~(a) is true. Likewise, Lemma~\ref{lem:cm}~(b) is a direct consequence of the stability of the (Lipschitz) Markov property in the weak${}^*$ convergence; see Proposition~\ref{pro:markov}~(b).

\section{The weak $S$-topology}
\label{snot}

We introduce the notion of weak $S$-topology and study its properties and relation to other topologies on the Skorokhod space.

\subsection{Definition}
\label{sec:sreg}

A possibility of defining a completely regular (non-sequential) $S$-topology is discussed already by Jakubowski in \cite{jakubowski}. Indeed, see the page~18 in \cite{jakubowski}. We describe a general method for regularizing any given topology. Our approach is inspired by the seminal work of Alexandroff \cite{alexandroff}. Let $\X=(X,\tau)$ be an arbitrary topological space and $\V$ an arbitrary subbase for the Euclidean topology on $\mathbb{R}$, then the family
\begin{equation}
\label{regularization}
\{f^{-1}(V):f\in\mathbb{C}_b(\X),\ V\in\V \}
\end{equation}
is a subbase for a (unique) topology on $X$. Indeed, the topology generated by the subbase \eqref{regularization} on $X$ is independent of the choice of the subbase $\V$ on $\mathbb{R}$; see e.g. \cite[3.4]{jerison}. The topology is generated by the family of pseudometrics
\begin{equation}
\label{pseudo}
\{\rho_{f_1,f_2,\dots,f_k}:f_1,f_2,\dots,f_k\in\mathbb{C}_b(\X)\},
\end{equation}
where
\[
\rho_{f_1,f_2,\dots,f_k}(x,y):=\max\{|f_1(x)-f_1(y)|,|f_2(x)-f_2(y)|,\dots,|f_k(x)-f_k(y)|\}
\]
for $x,y\in X$, and thus, the convergence of a net $(x_\alpha)$ to an element $x$ in this topology is equivalent to that
\begin{equation}
\label{convergence}
f(x_\alpha)\rightarrow f(x),\ \forall f\in\mathbb{C}_b(\X);
\end{equation}
see e.g. \cite[Example~8.1.19]{engelking}. We remark that by replacing $\mathbb{C}_b(\X)$ with $\mathbb{C}(\X)$ in \eqref{regularization}, \eqref{pseudo} and \eqref{convergence} one obtains an equivalent characterization. Any of these characterizations is necessary and sufficient criterion for a topological space to be completely regular ($T_{3{^1/_2}}$); see e.g. \cite[3.4]{jerison}.

\begin{definition}
\label{regs}
\textnormal{
We will denote by $S^*$ the topology generated on the Skorokhod space by the family \eqref{regularization} of $S$-continuous functions, and call it \emph{weak $S$-topology}.}
\end{definition}

\begin{remark}
\label{dirac}
The convergence in the weak${}^*$ topology on the $\beta_0$-dual of $\mathbb{C}_b(\mathbb{D})$, cf. Lemma~\ref{lem:riesz}, traditionally called the "weak convergence" for sequences of probability measures, is equivalent to the convergence \eqref{convergence}, if the measures are Dirac measures; see \cite[Lemma~8.9.2.]{bogachev}.
\end{remark}

\begin{remark}
\label{rem:jak}
It should be emphasized that, if one could show that the $S$-topology is regular (or linear), then the $S$- and the weak $S$-topology would coincide. It was communicated to the author by Professor Jakubowski that the regularity of $S$-topology remains as an open question.
\end{remark}

\subsection{Relation to other topologies}
\label{sec:rela}

The definitions of Jakubowski's $\Sigma$-topology, Jakubowski's $S$-topology, the Meyer-Zheng topology ($MZ$) and Skorokhod's $J^1$-topology are given in Appendix~\ref{sec:othertopos}. The $S^*$-topology is related to these topologies as follows.

\begin{proposition}
\label{pro:hierarchy}
We have $MZ\subset S^*$, $\Sigma\subset S^*$ and $S^* \subset S\subset J^1$.
\end{proposition}
\begin{proof}
The functions in \eqref{inmeas} and \eqref{interm}, that generate the topology $MZ$, are $S^*$-continuous; see Example~\ref{contexamples}. Moreover, the topology $MZ$ is metrizable, in particular, sequential and completely regular. Thus, by Example~\ref{contexamples}, the first inclusion $MZ\subset S^*$ is true; cf. \eqref{convergence}. The topology $\Sigma$ is a completely regular topology weaker than $S$; see \cite[Remark~3.8]{jakubowski4} and \cite[Theorem~1.6.5.]{bogachevtvs}. Since the topology $S^*$ is the strongest completely regular weaker than $S$, we have $\Sigma \subset S^*\subset S$. The final inclusion $S\subset J^1$ is proved in \cite{jakubowski} for a finite compact interval, and extends immediately for the infinite interval due to \eqref{eq:jinf}; cf. \eqref{eq:sinf}.
\end{proof}

The Skorokhod space endowed with the $J^1$-topology is a Polish space, so, the space is a Lusin space for any topology that is weaker than the $J^1$-topology. The following Theorem~\ref{thm:hierarchy} states that the $S^*$-topology, which is the strongest (completely) regular topology that is weaker than the $S$-topology, is the strongest (completely) regular Souslin topology on the Skorokhod space for which the sets \eqref{eq:cond} are compact, and consequently, Jakubowski's uniform tightness criteria \eqref{us} is a sufficient tightness criteria; cf. Section~\ref{sec:tightness}.

\begin{theorem}
\label{thm:hierarchy}
Let $T$ be a completely regular Souslin topology on the Skorokhod space, comparable to $S$, and $\K(T)=\K(S)$. Then
\[
T\subset S.
\]
\end{theorem}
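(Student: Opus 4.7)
The plan is to split on the comparability hypothesis: either $T\subset S$, in which case there is nothing to prove, or $S\subset T$, and in the latter case I aim to show $S=T$. So throughout what follows I may assume $S\subset T$.

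The first step is to show that the two topologies agree on every compact set. Fix $K\in\K(S)=\K(T)$. The assumption $S\subset T$ gives $S|_K\subset T|_K$, so the identity map $(K,T|_K)\to(K,S|_K)$ is continuous. Both $(K,S|_K)$ and $(K,T|_K)$ are compact Hausdorff (Hausdorffness comes from $T$ being completely regular and $S$ being Hausdorff; compactness from $K\in\K(S)=\K(T)$). A continuous bijection from a compact space to a Hausdorff space is a homeomorphism, so $T|_K=S|_K$ on every such $K$.

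The second step is to exploit the fact that Jakubowski's $S$-topology is a sequential space (it is defined as the topology associated to a sequential convergence). Let $A\subset\mathbb{D}$ be $T$-closed; it suffices to show that $A$ is sequentially closed in $S$. Take a sequence $(x_n)\subset A$ converging to some $x$ in $S$ and form
\[
K:=\{x_n:n\in\mathbb{N}\}\cup\{x\}.
\]
Being a convergent sequence together with its limit in a Hausdorff space, $K$ is $S$-compact, and so $T$-compact by $\K(S)=\K(T)$. By the first step, $S|_K=T|_K$, hence $x_n\to x$ in $T|_K$ and therefore in $T$. Since $A$ is $T$-closed, $x\in A$. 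Thus $A$ is $S$-sequentially closed, and by sequentiality of $S$, $A$ is $S$-closed. This yields $T\subset S$, completing the proof.

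The main obstacle I expect is to justify cleanly the sequentiality (or at least the $k$-space property) of the $S$-topology: this is where one genuinely uses the specific construction of $S$ as opposed to just abstract properties of $T$. The completely regular Souslin hypothesis on $T$ is used only lightly here (it enters through Hausdorffness and through the fact that this is the natural class being compared, e.g.\ to $S^*$); the real weight of the argument is the compatibility of the two topologies on compact sets combined with the sequential character of $S$.
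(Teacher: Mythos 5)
Your proof is correct. It rests on the same two pillars as the paper's argument --- that comparable topologies with identical compact sets must agree on each compact set (continuous bijection from a compact space onto a Hausdorff space), and that the $S$-topology is determined by its convergent sequences --- but the implementation is genuinely different and, in fact, more self-contained. The paper introduces the sequential coreflection $T_s$ of $T$, uses the metrizability of compacta of a completely regular Souslin space to get $\K(T)=\K(T_s)$, and then invokes the $k$-space property of sequential spaces (Engelking, Theorem~3.3.20) for both $S$ and $T_s$ to force $S=T_s$ and hence $S=T$. You bypass $T_s$ entirely: given a $T$-closed set $A$ and an $S$-convergent sequence from $A$, the compactum $\{x_n\}\cup\{x\}$ transfers the convergence into $T$, and sequentiality of $S$ finishes the job. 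What your route buys is economy of hypotheses --- you never need metrizability of compacta nor the $k$-space machinery, and the Souslin/complete regularity of $T$ enters only through the stated hypothesis $\K(T)=\K(S)$ --- at the price of having to justify directly that $S$ is sequential. That justification is exactly the external input the paper also relies on (the $S$-topology is built by the subsequential-convergence recipe, so sequentially closed sets are closed by construction; see the paper's Appendix on the $S$-topology), so you have correctly identified the one point where the specific construction of $S$ is indispensable.
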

\begin{proof}
Assume that $S\subset T$ and let $T_s$ denote the sequential topology generated by $T$. Since the compact sets of a completely regular Souslin space are metrizable, we have $\K(T)\subset\K(T_s)$; see e.g. \cite[p.~218]{bogachev}. Consequently, we have
\begin{equation}
\label{eq:a}
\K(S)=\K(T)=\K(T_s),
\end{equation}
where
\begin{equation}
\label{eq:b}
S\subset T\subset T_s
\end{equation}
and $S$ and $T_s$ are sequential; see Appendix~\ref{sec:sdef}. By \cite[Theorem~3.3.20.]{engelking}, the Skorokhod space is a (Hausdorff) $k$-space for $S$ and $T_s$, so, by \eqref{eq:a} and \eqref{eq:b}, we have $S=T$.
\end{proof}

\begin{remark}\label{rem:strongest}
For the Riesz representation theorem given in Lemma~\ref{lem:riesz} and the required auxiliary results given in Section~\ref{sec:basicresults} it is necessary that the underlying topological space is completely regular and Souslin. Among all such topologies, the topology $S^*$ is the strongest one that is weaker than $S$. Recall that, in addition to Lemma~\ref{lem:riesz} and results of Section~\ref{sec:basicresults}, the proof of the main result Theorem~\ref{thm:semi} goes through the property \eqref{us}, while the underlying relative compactness criteria \eqref{eq:cond} that gives arise for the tightness \eqref{us} is both necessary and sufficient for $S$; see also Remark~\ref{rem:char}. On the other hand, as shown in Theorem~\ref{thm:hierarchy} above, any topology with the previously cited properties and the appropriate compact closed sets \eqref{eq:cond} cannot be strictly stronger than $S$.
\end{remark}

\subsection{Properties}
\label{sec:sprop}

Recall Property~\ref{separatingfamily} from Section~\ref{sec:basicresults}. A topological space satisfying Property~\ref{separatingfamily} is submetrizable, from which various useful properties follow; see \eqref{eq:metric} and \cite{jakubowski5}. In fact, all key properties of the $S$-topology follow immediately from Property~\ref{separatingfamily}, and Property~\ref{separatingfamily} is preserved in the regularization \eqref{regularization}.

\begin{proposition}
\label{sproperties}
The $S$-topology has the following properties:
\begin{enumerate}[(a)]
\item $S$ is Hausdorff,
\item Each $K\in\K(S)$ is metrizable,
\item A set is compact if and only if it is sequentially compact,
\item The Borel $\sigma$-algebra $\B(S)$ and the canonical $\sigma$-algebra coincide,
\item The Skorokhod space endowed with $S$ is a Lusin space.
\end{enumerate}
The $S^*$-topology has the properties (a)-(e) and additionally:
\begin{enumerate}[(a)]
\setcounter{enumi}{5}
\item The Skorokhod space endowed with $S^*$ is perfectly normal and paracompact,
\item The Borel $\sigma$-algebra $\B(S^*)$ and the Baire $\sigma$-algebra $\B a(S^*)$ coincide,
\item $\mathbb{C}(S)=\mathbb{C}(S^*)$,
\item $\K(S)=\K(S^*)$.
\end{enumerate}
\end{proposition}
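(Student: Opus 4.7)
The plan is to first settle the two bridge statements (h) and (i), then reduce the $S^*$-assertions (a)--(e) to those for $S$ (which I would derive from Property~\ref{separatingfamily} along standard Jakubowski lines), and finally handle the new properties (f) and (g) via Fernique's theorem. For (h), by construction in Subsection~\ref{sec:sreg} the topology $S^*$ is the coarsest one making every $f\in\mathbb{C}_b(S)$ continuous, and the remark after \eqref{convergence} allows $\mathbb{C}_b$ to be replaced by $\mathbb{C}$ in the generating family, so $\mathbb{C}(S)\subset\mathbb{C}(S^*)$; the reverse inclusion is automatic from $S^*\subset S$ (Proposition~\ref{pro:hierarchy}). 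The same inclusion makes the identity $(\mathbb{D},S)\to(\mathbb{D},S^*)$ continuous and therefore sends $S$-compacts to $S^*$-compacts, which is (i).

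For $S$, I would let $\{f_k\}$ be the countable separating family of $S$-continuous functions provided by Property~\ref{separatingfamily}. Then (a) is immediate; on any $S$-compact $K$ the pseudometric associated to $\{f_k\}$ generates the relative $S$-topology on $K$ (a continuous bijection from a compact space to a Hausdorff space is a homeomorphism), which yields (b); (c) follows from (b) since compactness and sequential compactness coincide on metric spaces; for (d) one shows that $\sigma(\{f_k\})$ equals both $\B(S)$ and the canonical $\sigma$-algebra, each $f_k$ being canonically measurable; (e) comes from the continuous bijection $(\mathbb{D},J^1)\to(\mathbb{D},S)$ out of the Polish space $(\mathbb{D},J^1)$. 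Thanks to (h) the same family $\{f_k\}$ witnesses Property~\ref{separatingfamily} for $S^*$ as well, so the arguments transfer verbatim to give (a)--(e) for $S^*$ (with (e) using $S^*\subset J^1$, again by Proposition~\ref{pro:hierarchy}).

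The genuinely new content is (f) and (g). By construction $S^*$ is completely regular, and by (e) it is Souslin, so Fernique's theorem \cite{fernique}[Proposition~I.6.1]---already invoked in Subsection~\ref{sec:weakstar}---yields perfect normality. Paracompactness then follows because Souslin spaces are Lindel\"of (as continuous images of separable metric spaces) and every regular Lindel\"of space is paracompact. Finally, (g) is a direct consequence of (f): in a perfectly normal space every closed set is the zero set of a continuous function, hence Baire, so $\B(S^*)\subset\B a(S^*)$; the reverse inclusion is automatic.

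I expect the main obstacle to sit in (d) for $S^*$: one must verify that passing to the weaker $S^*$-topology does not enlarge the Borel $\sigma$-algebra beyond the canonical one. The argument I have in mind relies on the Souslin structure established in (e) together with Property~\ref{separatingfamily} to show that $\B(S^*)$ is already generated by the countable family $\{f_k\}$, so that $\B(S^*)=\sigma(\{f_k\})$ equals the canonical $\sigma$-algebra, exactly as for $S$.
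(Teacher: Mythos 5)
Your proposal is correct and, in outline, matches the paper's proof: (a)--(c) are obtained from Property~\ref{separatingfamily} along Jakubowski's lines, (e) from the chain $S^*\subset S\subset J^1$ with $(\mathbb{D},J^1)$ Polish, (f) from Fernique's theorem for regular Souslin spaces, (g) from perfect normality, and (h),(i) directly from the definition of $S^*$ together with Proposition~\ref{pro:hierarchy}. The one place where you genuinely diverge is (d). The paper avoids any Blackwell-type argument on Souslin spaces: it observes that for $t<T$ the evaluation $\omega\mapsto\omega^i(t)$ is the pointwise limit $\lim_{\delta\rightarrow 0}\frac{1}{\delta}\int_t^{t+\delta}\omega^i(u)\,du$ of $S^*$-continuous functions (and is itself $S^*$-continuous at $t=T$), which gives $\sigma(X_u:u\in I)\subset\B(S^*)$, and then closes the sandwich with $\B(S^*)\subset\B(S)\subset\B(J^1)=\sigma(X_u:u\in I)$. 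Note that your route through $\B(S^*)=\sigma(\{f_k\})$ only delivers the inclusion $\B(S^*)\subset\sigma(X_u:u\in I)$; for the reverse inclusion you still must show that each evaluation map is $\sigma(\{f_k\})$-measurable, which is exactly the integral-average limit above --- so the Souslin/Blackwell machinery buys you nothing and the direct argument is cleaner. Two smaller remarks: in (c) the direction ``sequentially compact $\Rightarrow$ compact'' needs slightly more than (b), since (b) only metrizes sets already known to be compact; the standard fix is that the injection into $\mathbb{R}^{\mathbb{N}}$ furnished by $\{f_k\}$ is a closed map on any sequentially compact set (images of its closed subsets are sequentially compact in a metric space, hence compact, hence closed), so it is a homeomorphism onto a compact image. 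Finally, your Lindel\"of route to paracompactness is valid but unnecessary: Fernique's Proposition I.6.1, which the paper cites, already yields paracompactness together with perfect normality.
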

\begin{proof}
The properties (a), (b) and (c) follow immediately from the fact that the (weak) $S$-topology satisfies Property~\ref{separatingfamily}; see \cite[pages~10-11]{jakubowski5}. Indeed, the mappings
\begin{equation}
\label{eq:separ}
\omega\mapsto\frac{1}{r}\int_q^{q+r}\omega^i(t)dt\text{ and }\omega\mapsto\omega^i(T),\text{ for }I=[0,T],
\end{equation}
where $q$ and $q+r$ run over the rationals in $I$ and $i$ over the spatial dimensions $1,\dots,d$, constitute a countable family of continuous functions that separates the Skorokhod space; cf. Example~\ref{contexamples}.

(d) We prove the claim for $I=[0,T]$. The proof is completely similar for $I=[0,\infty)$. Fix a coordinate $i\leq d$. For all $0\leq t<T$, we have
\[
\omega^i(t)=\lim_{\delta\rightarrow 0}\frac{1}{\delta}\int_t^{t+\delta}\omega^i(u)du,
\]
i.e., the mapping $\omega\mapsto\omega^i(t)$ is a limit of elements of $\mathbb{C}(S)$, for every $t$ in $I$, while for $t=T$, the mapping $\omega\mapsto\omega^i(t)$ is an element of $\mathbb{C}(S)$. Consequently, we have $\sigma(X_u:u\in I)\subset\B(S^*)$ and since $S^*$ is weaker than $S$, we have $\B(S^*)\subset\B(S)$. On the other hand, by Proposition~\ref{pro:hierarchy}, $S$ is weaker than $J^1$, so, we have $\B(S)\subset\B(J^1)$, where $\B(J^1)=\sigma(X_u:u\in I)$. Thus, $\B(S^*)=\B(S)=\sigma(X_u:u\in I)$. By Proposition~\ref{pro:hierarchy}, we have $S^*\subset S\subset J^1$ and the Skorokhod space endowed with $J^1$ is a Polish space, so, the Skorokhod space endowed with $S$ or $S^*$ is a Lusin space. Thus, we have (e).

The Skorokhod space endowed with $S^*$ is a (completely) regular Souslin space. By the result of Fernique, every regular Souslin space is perfectly normal and paracompact; see \cite[Proposition~I.6.1]{fernique}. Thus, we have (f). Now, by (f), the Skorokhod space endowed with $S^*$ is perfectly normal, and consequently, by \cite[Proposition~6.3.4.]{bogachev}, we have $\B(S^*)=\B a(S^*)$, i.e., we have (g). The claim (h) follows directly from Definition~\ref{regs}.

To prove (i), we first observe that $\K(S)\subset\K(S^*)$, by Definition~\ref{regs}. To prove the converse inclusion we use Jakubowski's $\Sigma$-topology; see Appendix~\ref{sec:sigma}. By \cite[Remark~3.6]{jakubowski4}, we have $\Sigma\subset S$, so, we have $\mathbb{C}(\Sigma)\subset\mathbb{C}(S)=\mathbb{C}(S^*)$. Thus, we have that $\Sigma\subset S^*$, since the topology $\Sigma$ is completely regular. Indeed, topological vector spaces are completely regular; see e.g. \cite[Theorem~1.6.5.]{bogachevtvs}. Consequently, by \cite[Remark~3.8]{jakubowski4}, we get $\K(S^*)\subset\K(\Sigma)=\K(S)$. Thus, we have shown $\K(S)=\K(S^*)$.

\end{proof}

\begin{remark}
\label{rem:initial}
A countable product of regular Souslin spaces is a regular Souslin space. Thus, by the result of Fernique \cite[Proposition~I.6.1]{fernique}, the previous properties (after the obvious modifications) are inherited for (at most) countable products of $S^*$-topologies; cf. Section~\ref{sec:weakstar}.
\end{remark}

\subsection{Compact sets and continuous functions}
\label{sec:contandcomp}

In this section, we recall the compactness and continuity criteria for the $S$-topology from \cite{jakubowski} and \cite{jakubowski4}.

\subsubsection{Compactness criteria}

The \emph{necessity and sufficiency} of the condition \eqref{eq:cond} for the relative (sequential) compactness in the $S$-topology was proved in \cite{jakubowski}, for $I=[0,T]$, $T<\infty$, and the multi-dimensional infinite horizon extension was provided in \cite{jakubowski4}.

\begin{proposition}
\label{precompcrit} 
A subset $K$ of $\mathbb{D}([0,T];\mathbb{R})$, $T<\infty$, is relatively sequentially $S$-compact if and only if the following conditions are satisfied:
\begin{equation}
\label{eq:cond}
\begin{cases}
    &\sup_{\omega\in K}\|\omega\|_\infty<\infty,\\
    &\sup_{\omega\in K} N^{a,b}(\omega)<\infty,\ \forall a<b,\ a,b\in\mathbb{R}.
\end{cases}
\end{equation}
\end{proposition}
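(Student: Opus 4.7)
The statement is an equivalence, so I would prove the two implications separately. For necessity, the key observation is that, although $\omega \mapsto \|\omega\|_\infty$ and $\omega \mapsto N^{a,b}(\omega)$ are not $S$-continuous, they are $S$-lower semicontinuous on the Skorokhod space (the upcrossings count being a supremum over partitions of functionals that are $S$-continuous once evaluated at fixed time points, and the sup norm being handled via the generating family \eqref{eq:separ}, which controls arbitrarily close approximants to $\omega(t)$). Assuming pre-compactness fails in the first condition, I would choose $(\omega_n) \subset K$ with $\|\omega_n\|_\infty \to \infty$, extract an $S$-convergent subsequence $\omega_{n_k} \to \omega \in \mathbb{D}$, and reach a contradiction with $\|\omega\|_\infty < \infty$ via lower semicontinuity. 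The upcrossings case is analogous.

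For sufficiency, I would argue via a Helly-type selection. Given $(\omega_n) \subset K$, enumerate a countable dense set $D \subset [0,T]$ with $T \in D$. By the first bound in \eqref{eq:cond} and a diagonal extraction, pass to a subsequence (still denoted $(\omega_n)$) converging pointwise on $D$ to some function $\varphi : D \to \mathbb{R}$. The second bound in \eqref{eq:cond} implies, via lower semicontinuity of $N^{a,b}$ along pointwise-on-$D$ convergence for any rational $a<b$, that $N^{a,b}(\varphi) < \infty$ on $D$ for all rational intervals; this classical lemma (see \cite{meyerzheng}) guarantees that $\varphi$ admits left and right limits at each point of $[0,T]$. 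Define the candidate limit $\omega$ as the right-continuous version of $\varphi$, so $\omega \in \mathbb{D}([0,T];\mathbb{R})$.

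The remaining and main obstacle is upgrading this pointwise-on-$D$ convergence to $S$-convergence. I would invoke Jakubowski's characterization of $S$-convergence: $\omega_n \to_S \omega$ if and only if, for every $\varepsilon > 0$, there exist functions of bounded variation $v_n^\varepsilon, v^\varepsilon$ with $\|v_n^\varepsilon - \omega_n\|_\infty \vee \|v^\varepsilon - \omega\|_\infty \leq \varepsilon$ and $v_n^\varepsilon \to v^\varepsilon$ weakly$^*$ as measures (or, equivalently, in the weak sense of Helly). Using the uniform upcrossings bound, one builds such approximants by thresholding: partition $[-M,M]$, where $M := \sup_{\omega \in K}\|\omega\|_\infty$, into $2M/\varepsilon$ intervals and let $v_n^\varepsilon$ be a step function of $\omega_n$ taking only the threshold values, jumping each time $\omega_n$ completes a crossing of a threshold interval; the total variation of $v_n^\varepsilon$ is then controlled by $\varepsilon \cdot \sup_{\omega \in K} N^{a,b}(\omega)$ summed over finitely many thresholds, hence uniformly bounded. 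A standard Helly argument then extracts, up to a further subsequence, weak$^*$ convergence $v_n^\varepsilon \to v^\varepsilon$ with the required uniform bound, giving $S$-convergence.

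The hard part is the last step: making the threshold approximation both compatible across $\varepsilon$ and compatible with the pointwise limit on $D$, so that the same subsequence works simultaneously for all approximation levels. The standard remedy is another diagonal extraction over a sequence $\varepsilon_m \downarrow 0$ of threshold parameters, using that each $v_n^{\varepsilon_m}$ has uniformly bounded variation and values in a finite set, so Helly's selection theorem applies in each layer, and a diagonal subsequence can be chosen to stabilize all layers at once. The details are essentially those of \cite{jakubowski}[Theorem~3.1], and I would follow that path while relying on \eqref{eq:cond} precisely where Jakubowski invokes the same two bounds.
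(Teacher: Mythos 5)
First, a point of comparison: the paper does not actually prove Proposition~\ref{precompcrit}; it quotes it from \cite{jakubowski} (with the multi-dimensional, infinite-horizon extension from \cite{jakubowski4}), so there is no internal proof to measure you against. Your sufficiency half is a faithful sketch of Jakubowski's argument --- diagonal extraction of a pointwise limit on a countable dense set containing $T$, the uniform upcrossing bounds forcing the limit to be regulated, and the layered threshold/Helly construction of bounded-variation $\varepsilon$-approximants --- and, beyond openly deferring the combinatorics to the reference, I see no problem with it.

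The necessity half, however, contains a genuine logical error. Lower semicontinuity of $\omega\mapsto\|\omega\|_\infty$ along an $S$-convergent sequence $\omega_{n_k}\rightarrow_S\omega$ gives only $\|\omega\|_\infty\leq\liminf_k\|\omega_{n_k}\|_\infty$, which is an inequality in the wrong direction: it is perfectly consistent with $\|\omega_{n_k}\|_\infty\rightarrow\infty$ and $\|\omega\|_\infty<\infty$ holding simultaneously, so no contradiction arises. The same objection applies verbatim to the upcrossing functionals, which the paper itself records only as lower semicontinuous (Example~\ref{lscexamples}). What necessity actually requires is an \emph{upper} bound on $\|\omega_{n_k}\|_\infty$ and $N^{a,b}(\omega_{n_k})$ along any $S$-convergent sequence, and this must be extracted from the definition of $S$-convergence rather than from semicontinuity: for $\varepsilon>0$ choose $\nu^{\varepsilon}_{n_k}\in\mathbb{V}([0,T])$ with $\|\omega_{n_k}-\nu^{\varepsilon}_{n_k}\|_\infty\leq\varepsilon$ and $\nu^{\varepsilon}_{n_k}\rightarrow_{w^*}\nu^{\varepsilon}_0$ in the dual of $\mathbb{C}([0,T])$; by the uniform boundedness principle the total variations $\|\nu^{\varepsilon}_{n_k}\|_\mathbb{V}$ are bounded in $k$, whence $\|\omega_{n_k}\|_\infty\leq\varepsilon+\|\nu^{\varepsilon}_{n_k}\|_\mathbb{V}$ stays bounded and, for $2\varepsilon<b-a$, $N^{a,b}(\omega_{n_k})\leq N^{a+\varepsilon,b-\varepsilon}(\nu^{\varepsilon}_{n_k})\leq\|\nu^{\varepsilon}_{n_k}\|_\mathbb{V}/(b-a-2\varepsilon)$ stays bounded as well. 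With this substitution your contradiction argument goes through; as written, the necessity direction is not proved.
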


\begin{remark}\label{rem:char}
A right-continuous function $\omega:[0,T]\rightarrow\mathbb{R}$ is c{\`a}dl{\`a}g if and only if the following conditions are satisfied:
\[
\begin{cases}
    &\|\omega\|_\infty<\infty,\\
    & N^{a,b}(\omega)<\infty,\ \forall a<b,\ a,b\in\mathbb{R}.
\end{cases}
\]
\end{remark}

\begin{proposition}
\label{precompcrit2} 
A subset $K$ of $\mathbb{D}([0,\infty );\mathbb{R})$ is relatively sequentially $S$-compact if and only if the set $K$ restricted on $[0,t]$ satisfies the conditions \eqref{eq:cond} for every $0<t<\infty$.
\end{proposition}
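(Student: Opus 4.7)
The plan is to reduce Proposition~\ref{precompcrit2} to the finite-horizon case already settled in Proposition~\ref{precompcrit}, by exploiting how the $S$-topology on $\mathbb{D}([0,\infty[;\mathbb{R})$ is assembled from its finite-horizon incarnations via restrictions $[\omega]^t$. The working assumption, which I expect to be precisely the referenced formula \eqref{eq:sinf} in the appendix, is that $\omega_n\to\omega$ in $S$ on $\mathbb{D}([0,\infty[;\mathbb{R})$ if and only if $[\omega_n]^t\to[\omega]^t$ in $S$ on $\mathbb{D}([0,t];\mathbb{R})$ for every $t$ in some dense subset of $[0,\infty[$ depending on $\omega$ (typically the continuity points of the limit, outside a countable exceptional set). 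Granted this, both implications become essentially bookkeeping around Proposition~\ref{precompcrit}.

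For the necessity direction, fix $t<\infty$. Given any sequence $(\omega_n)\subset K$, sequential pre-$S$-compactness of $K$ yields an $S$-convergent subsequence $\omega_{n_k}\to\omega$ in $\mathbb{D}([0,\infty[;\mathbb{R})$. Pick any $s>t$ inside the dense set associated to $\omega$, so that $[\omega_{n_k}]^s\to[\omega]^s$ in $S$ on $\mathbb{D}([0,s];\mathbb{R})$; a further restriction to $[0,t]$ is safe along a further subsequence, since restriction between nested finite intervals preserves $S$-convergence at continuity points of the target, and those are again dense. Consequently $[K]^t$ is sequentially pre-$S$-compact in $\mathbb{D}([0,t];\mathbb{R})$, and Proposition~\ref{precompcrit} yields the conditions \eqref{eq:cond} for $[K]^t$.

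For the sufficiency direction, fix an exhausting sequence $t_k\uparrow\infty$ in $[0,\infty[$. By hypothesis and Proposition~\ref{precompcrit}, each $[K]^{t_k}$ is sequentially pre-$S$-compact. Given $(\omega_n)\subset K$, apply Cantor's diagonal extraction: successively choose nested subsequences $(\omega_n^{(k)})_{n\in\naturals}$ so that $[\omega_n^{(k)}]^{t_k}$ $S$-converges, as $n\to\infty$, to a limit $\eta_k\in\mathbb{D}([0,t_k];\mathbb{R})$, then take the diagonal $\omega_{n_j}:=\omega_j^{(j)}$. By uniqueness of limits in the Hausdorff space $\mathbb{D}([0,t_k];\mathbb{R})$ and the sequential $S$-continuity of restriction between nested finite intervals, the $\eta_k$ are compatible, $[\eta_{k+1}]^{t_k}=\eta_k$, and patch into a single $\eta\in\mathbb{D}([0,\infty[;\mathbb{R})$. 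Since $\{t_k\}$ is dense in $[0,\infty[$, the characterization of the infinite-horizon $S$-topology recalled above yields $\omega_{n_j}\to\eta$ in $S$ on $\mathbb{D}([0,\infty[;\mathbb{R})$, completing the argument.

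The main obstacle is the absence of $S$-continuity of the restriction map $\omega\mapsto[\omega]^t$ at an arbitrary $t$: the $S$-topology tolerates small time-shifts of jumps, so the behaviour at $t$ is ambiguous whenever $t$ is a jump time of the limit. This forces one to work along dense cofinal sets of continuity times on both sides, rather than at prescribed $t$'s, and it is exactly for this reason that the natural characterization of the infinite-horizon $S$-topology must be formulated in terms of restrictions at a dense set rather than at every $t$. Once the appendix's formula \eqref{eq:sinf} supplies this characterization, the diagonal argument and the appeal to Proposition~\ref{precompcrit} close the proof.
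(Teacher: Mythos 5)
First, a point of reference: the paper offers no proof of this proposition at all --- it is imported from \cite{jakubowski4} --- so your argument can only be measured against the literature. Your overall strategy, reducing to Proposition~\ref{precompcrit} on each finite horizon and gluing by a diagonal extraction, is the right one and is essentially how the infinite-horizon extension is obtained there.

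There is, however, a genuine gap in the sufficiency direction. You prescribe the horizons $t_k$ in advance and justify the compatibility $[\eta_{k+1}]^{t_k}=\eta_k$ by ``the sequential $S$-continuity of restriction between nested finite intervals.'' Restriction $\mathbb{D}([0,s];\mathbb{R})\to\mathbb{D}([0,t];\mathbb{R})$, $t<s$, is \emph{not} sequentially $S$-continuous: the terminal value of the restricted path is the evaluation at the interior point $t$, and the paper itself records in Subsection~\ref{conti} that $\omega\mapsto\omega(t)$ fails to be $S$-continuous for every $t<T$; restriction behaves well only at cut points where the limit is continuous, and the prescribed $t_k$ need not be such points for the limits $\eta_{k+1}$. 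You identify exactly this obstacle in your closing paragraph, but the diagonal construction as written does not route around it: the $\eta_k$ may disagree at the cut points, and the final appeal to \eqref{eq:sinf} requires exhibiting \emph{some} sequence $T^r\uparrow\infty$ of good cut times for the patched limit $\eta$ together with a restriction lemma at such times, neither of which is supplied. The gap is repairable --- the $\eta_k$ agree on $[0,t_k[$ by convergence in measure plus right-continuity, so $\eta$ is well defined from the interiors, and one then chooses the $T^r$ among continuity points of $\eta$ and verifies via Definition~\ref{def:sconvergence} that restriction at such points preserves $S$-convergence --- but these are precisely the technical steps that carry the proof. A smaller inaccuracy occurs in the necessity direction: you assert that $[K]^t$ is sequentially pre-$S$-compact for \emph{every} $t$ by restricting convergent subsequences, which meets the same obstruction at bad $t$; the intended conclusion, that $[K]^t$ satisfies \eqref{eq:cond}, follows instead by establishing the bounds for $[K]^s$ at a good time $s>t$ and using that $\|[\omega]^t\|_\infty$ and $N^{a,b}([\omega]^t)$ are nondecreasing in $t$.
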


We make the following observations.

\begin{enumerate}

\item For any two real numbers $a<b$ one can find rationals $r<q$ so that $a<r<q<b$, so, it is sufficient to let $a<b$ range rationals in Proposition~\ref{precompcrit}.

\item The mappings of $\omega$ in Proposition~\ref{precompcrit} are lower semicontinuous in the $S^*$-topology, so, their lower level sets are closed in the $S^*$-topology; cf. Example~\ref{lscexamples}.

\item A Cartesian product set in a multi-dimensional Skorokhod space is relatively sequentially $S$-compact if and only if each set in the product is relatively sequentially $S$-compact; cf. Definition~\ref{def:sconvergence}.

\item $S$-compact set are $S^*$-compact; cf. Proposition~\ref{sproperties}~(i).

\end{enumerate}

Combining the previous facts we obtain the following \emph{compactness} criteria.

\begin{corollary}\label{finalcrit} Let $K=K^1\times\cdots\times K^d$ be a Cartesian product set on the Skorokhod space $\mathbb{D}(I;\mathbb{R}^d)$ endowed with $S$ or $S^*$. Then the set $K$ is compact, if, for each $i\leq d$, there exists a (non-decreasing) function $C^{i}_{q,r}:I\rightarrow\mathbb{R}_+$, for all $q<r$ in $\mathbb{Q}$, and a (non-decreasing) function $M^{i}:I\rightarrow\mathbb{R}_+$ such that
\[
K^i:=\bigcap_{{q<r}}\{\omega^{i}:N^{q,r}\left([\omega^{i}]^t\right)\leq C^{i}_{q,r}(t)\text{ and }\|[\omega^{i}]^t\|_\infty\leq M^{i}(t)\ \forall t<\infty\},
\]
where the intersection is taken over all rationals $q<r$ and $[\omega^{i}]^t$ denotes the restriction of $\omega^{i}$ on $[0,t]$.
\end{corollary}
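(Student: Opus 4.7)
The plan is to assemble the four observations preceding the corollary together with Propositions~\ref{precompcrit} and \ref{precompcrit2}.

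First, I would fix a coordinate $i\leq d$ and verify that $K^i$ is sequentially pre-$S$-compact. For every finite $t>0$, every $\omega^i\in K^i$ satisfies $\|[\omega^i]^t\|_\infty\leq M^i(t)<\infty$ and $N^{q,r}([\omega^i]^t)\leq C^i_{q,r}(t)<\infty$ for all pairs of rationals $q<r$. By observation~(1) above, letting $a<b$ range over rationals in \eqref{eq:cond} is sufficient, so the restrictions of the elements of $K^i$ to $[0,t]$ satisfy \eqref{eq:cond}. Hence Proposition~\ref{precompcrit2} (or Proposition~\ref{precompcrit} when $I=[0,T]$) yields that $K^i$ is sequentially pre-$S$-compact.

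Second, I would establish that each $K^i$ is closed. By observation~(2), for every fixed $t<\infty$ and every pair of rationals $q<r$, the mappings $\omega^i\mapsto\|[\omega^i]^t\|_\infty$ and $\omega^i\mapsto N^{q,r}([\omega^i]^t)$ are $S^*$-lower semicontinuous, so their lower level sets are $S^*$-closed. As an intersection of $S^*$-closed sets, $K^i$ is $S^*$-closed, and since $S^*\subset S$ by Proposition~\ref{pro:hierarchy}, $K^i$ is also $S$-closed. Consequently each $K^i$ is sequentially pre-$S$-compact and $S$-closed, hence sequentially $S$-compact, and therefore $S$-compact by Proposition~\ref{sproperties}~(c).

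Finally, I would invoke observation~(3) to combine the coordinates: a Cartesian product set in the multi-dimensional Skorokhod space is sequentially pre-$S$-compact iff each factor is, so $K=K^1\times\cdots\times K^d$ is sequentially pre-$S$-compact in $\mathbb{D}(I;\mathbb{R}^d)$. Together with the fact that $K$ is closed as a product of closed sets and with Proposition~\ref{sproperties}~(c), the product $K$ is $S$-compact. Observation~(4) then upgrades this to $S^*$-compactness, completing the proof. There is no genuine obstacle here; the only thing to watch is that the observations are applied in the right order, so that pre-compactness is obtained in $S$ (where the criterion \eqref{eq:cond} is available), while closedness is obtained already in the coarser $S^*$-topology via the lower semicontinuity of the controlling functionals, and the compactness conclusion then passes to both topologies simultaneously.
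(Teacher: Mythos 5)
Your proposal is correct and follows essentially the same route the paper intends: the paper offers no written-out proof but explicitly presents the corollary as the result of "combining the previous facts," i.e., observations (1)--(4) together with Propositions~\ref{precompcrit} and \ref{precompcrit2} and Proposition~\ref{sproperties}, which is exactly how you assemble it. Your ordering --- pre-compactness via the rational upcrossing/supremum bounds, closedness via $S^*$-lower semicontinuity of the controlling functionals, the product and sequential-versus-topological compactness steps, and the final passage to $S^*$ --- matches the paper's intended argument.
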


Remark that, for $I=[0,T]$, $T<\infty$, it suffices to consider constant $C^i_{q,r}$ and $M^i$ in Corollary~\ref{finalcrit}.

\subsubsection{Examples of (semi-)continuous functions}
\label{conti}

By Proposition~\ref{sproperties}~(h), $S^*$-continuous functions are precisely the $S$-continuous ones. In particular, we would like to emphasize that the evaluation mapping at $t$ is not continuous for any $t<T$; see \cite[p.11]{jakubowski}.

\begin{example}
\label{contexamples}
\begin{enumerate}[(a)]
\item The following mappings are $S^*$-continuous on $\mathbb{D}(I;\mathbb{R}^d)$
\[
\omega\mapsto\int_I G(t,\omega^i(t)) d\mu(t),\ i\leq d,
\]
whenever $G$ is measurable as a mapping of $(t,x)$, continuous as a mapping of $x$, for every $t\in I$, and such that
\begin{equation}
\label{eq:G}
\sup_{0\leq t\leq c}\sup_{|x|\leq c}|G(t,x)|<\infty,\ \forall c>0,
\end{equation}
and $\mu$ is a diffusive (an atomless) measure on $I$; see \cite[Corollary~2.11]{jakubowski}.

\item The mapping
\[
\omega\mapsto\omega(T)
\]
is $S^*$-continuous on $\mathbb{D}([0,T];\mathbb{R}^d)$; see \cite[Remark~2.4]{jakubowski}.

\end{enumerate}
\end{example}

By Proposition~\ref{pro:hierarchy}, the $S^*$-topology is stronger than the Meyer-Zheng topology ($MZ$), so, the uniform norm and the number of upcrossings of an interval $[a,b]$ are lower semicontinuous functions; see Lemma~\ref{lem:lsc}.

\begin{example}
\label{lscexamples}
The functions
\[
\omega\mapsto\|\omega\|_\infty\text{ and }\omega\mapsto N^{a,b}(\omega^i),\ a<b,\ i\leq d,
\]
are lower semicontinuous in the $S^*$-topology on $\mathbb{D}(I;\mathbb{R}^d)$.
\end{example}

\appendix\normalsize 
\section{Appendix}\label{sec:appendix}

The appendix collects the definitions of topologies and auxiliary results used in the main part of the article.

\subsection{Topologies on the Skorokhod space}
\label{sec:othertopos}

We recall the definitions of Jakubowski's $S$-topology and $\Sigma$-topology, the Meyer-Zheng pseudo-path topology and Skorokhod's $J^1$-metric topology. We define each topology separately on $\mathbb{D}([0,T];\mathbb{R}^d)$, for $T<\infty$, and $\mathbb{D}([0,\infty);\mathbb{R}^d)$. In particular, we use a formal definition of the Meyer-Zheng pseudo-path topology ($MZ$) that takes into account the fluctuations of the terminal value in the case of a finite time horizon. The space $\mathbb{D}([0,\infty];\mathbb{R}^d)$ is regarded as a product space $\mathbb{D}([0,\infty];\mathbb{R}^d)=\mathbb{D}([0,\infty);\mathbb{R}^d)\times\mathbb{R}^d$, where the space $\mathbb{R}^d$ is endowed with the Euclidean topology.

\subsubsection{The $S$-topology}
\label{sec:sdef}

Jakubowski's $S$-topology, introduced in \cite{jakubowski}, is a sequential topology. The following definition of the $S$-convergence on $\mathbb{D}([0,T];\mathbb{R})$ is taken from \cite{jakubowski}; the multi-dimensional version can be found in \cite{jakubowski4}.

\begin{definition}\label{def:sconvergence}\textnormal{ On $\mathbb{D}([0,T];\mathbb{R}^d)$, we write $\omega_n\rightarrow_{S}\omega_0$, if, for every $i\leq d$, for every $\varepsilon>0$, one can find $(\nu^{i,\varepsilon}_n)_{n\in\mathbb{N}_0}\subset\mathbb{V}([0,T])$ such that
\[
\|\omega^i_n-\nu^{i,\varepsilon}_n\|_\infty\leq\varepsilon,\ \forall n\in\mathbb{N}_0,\text{ and }\nu^{i,\varepsilon}_n\rightarrow_{w^*}\nu^{i,\varepsilon}_0,\text{ as }n\rightarrow\infty,
\]
where the convergence "$\rightarrow_{w^*}$" is in the weak${}^*$ topology on $\mathbb{V}([0,T])$, which can be identified with the Banach dual of $\mathbb{C}([0,T])$, under the uniform norm.}
\end{definition}

The following definition of the $S$-convergence on $\mathbb{D}([0,\infty);\mathbb{R}^d)$ is taken from \cite{jakubowski4}.

\begin{definition}\textnormal{  On $\mathbb{D}([0,\infty);\mathbb{R}^d)$ we write $\omega_n\rightarrow_{S}\omega_0$, if, for every $i\leq d$, one can find a sequence of positive real numbers $(T^r)_{r\in\mathbb{N}}$, increasing to $\infty$, such that
\begin{equation}
\label{eq:sinf}
[\omega^i_n]^{T^r}\rightarrow_{S}[\omega^i_0]^{T^r},\text{ for every }r\in\mathbb{N},
\end{equation}
where $[\omega^i]^{T^r}$ denotes the restriction of a path $\omega^i\in\mathbb{D}([0,\infty);\mathbb{R})$ on $\mathbb{D}([0,T^r];\mathbb{R})$.}
\end{definition}

A topological convergence is obtained by requiring that every subsequence admits a further $S$-convergent subsequence; see \cite[Theorem~6.3]{jakubowski4}. The following definition for the $S$-topology on the Skorokhod space $\mathbb{D}([0,T];\mathbb{R}^d)$ and $\mathbb{D}([0,\infty);\mathbb{R}^d)$ are taken from \cite{jakubowski} and \cite{jakubowski4}, respectively.

\begin{definition}\textnormal{ 
The $S$-topology is the topology generated on the Skorokhod space by the subsequential $S$-convergence.}
\end{definition}

\emph{The Skorokhod space endowed with the $S$-topology is known to be a Hausdorff ($T_2$) space and a stronger separation axiom is an open problem.} A weak separation axiom is a well-known issue for topologies defined via the subsequential convergence (KVPK recipe); see \cite[Appendix]{jakubowski4} for elaboration. The difficulties  encountered in establishing the regularity of the $S$-topology are explained in \cite[Rem.~3.12]{jakubowski}.

\subsubsection{The $\Sigma$-topology}\label{sec:sigma}

Jakubowski's $\Sigma$-topology was introduced in \cite{jakubowski4}. The Skorokhod space endowed with $\Sigma$ is a locally convex vector space. Following \cite{jakubowski4}, we start by defining an auxiliary mode of convergence $\rightarrow_\tau$ on the space of (bounded) continuous functions of finite variation $\mathbb{A}([0,T];\mathbb{R}):=\mathbb{C}([0,T];\mathbb{R})\cap\mathbb{V}([0,T];\mathbb{R})$. Namely, for a sequence $(A_n)_{n\in\mathbb{N}_0}\subset\mathbb{A}([0,T];\mathbb{R})$, we write $A_n\rightarrow_\tau A_0$, if
\[
\|A_n-A_0\|_\infty\rightarrow 0,\text{ as }n\rightarrow\infty,
\]
and
\[
\sup_{n\in\mathbb{N}_0}\|A_n\|_{\mathbb{V}}<\infty,
\]
where $\|\cdot\|_{\mathbb{V}}$ denotes the total variation norm.

\begin{definition}
The topology $\Sigma$ on $\mathbb{D}([0,T);\mathbb{R}^d)$ is the topology generated by the seminorms
\[
\rho^i_{\A}=\sup_{A\in\A}\left|\int_{[0,T)}\omega^i(u)dA(u)\right|,\ i\leq d,
\]
where $\A$ ranges over relatively $\tau$-compact subsets of $\mathbb{A}([0,T);\mathbb{R})$.
\end{definition}

\begin{definition}
The topology $\Sigma$ on $\mathbb{D}([0,T];\mathbb{R}^d)$ is the topology generated by the seminorm $\rho^i_T(\omega)=|\omega^i(T)|$, $i\leq d$, and the seminorms
\[
\rho^i_{\A}=\sup_{A\in\A}\left|\int_{[0,T]}\omega^i(u)dA(u)\right|,\ i\leq d,
\]
where $\A$ ranges over relatively $\tau$-compact subsets of $\mathbb{A}([0,T];\mathbb{R})$.
\end{definition}

The topology $\Sigma$ was defined on the Skorokhod space $\mathbb{D}([0,T];\mathbb{R})$ for $T=1$ in \cite{jakubowski4}. The following properties were shown to be true for $\Sigma$ on $\mathbb{D}([0,T];\mathbb{R})$.

\begin{proposition}\label{pro:sigmaproperties}
The $\Sigma$-topology has the following properties:
\begin{enumerate}[(i)]
\item The Skorokhod space endowed with $\Sigma$ is a locally convex vector space.
\item The topology $\Sigma$ is weaker than the topology $S$.
\item A set is $\Sigma$-compact if and only if it is $S$-compact.
\end{enumerate}
\end{proposition}

\begin{remark}\label{rem:sigmainfinite}
It was communicated to the author by Professor Jakubowski that the properties of Proposition~\ref{pro:sigmaproperties} remain true for the infinite horizon extension of the $\Sigma$-topology.
\end{remark}

\subsubsection{The Meyer-Zheng topology}
\label{sec:mz}

The Meyer-Zheng topology, introduced in \cite{meyerzheng}, is a relative topology, on the image measures on the graphs $(t,\omega(t))_{t\in[0,\infty]}$ of trajectories $(\omega(t))_{t\in[0,\infty]}$ under the measure $\lambda[dt]:=e^{-t}dt$ (called pseudo-paths), induced by the weak topology on probability laws on compactified space $[0,\infty]\times\overline{\mathbb{R}}$. We refer to the Meyer-Zheng topology formally ($MZ$) as the topology on the Skorokhod space $\mathbb{R}(I;\mathbb{R}^d)$ generated by the coordinatewise convergence in measure; see \eqref{inmeas}. The following definition is adapted from \cite[Lemma~1]{meyerzheng}, which states that, on $\mathbb{D}([0,\infty);\mathbb{R})$, the convergence in measure \eqref{inmeas} is indeed equivalent to the convergence in the pseudo-path topology.

\begin{definition}
\label{def:mzinf}\textnormal{ 
The topology $MZ$ on $\mathbb{D}(I;\mathbb{R}^d)$, where $I=[0,\infty)$, is the topology generated by the convergence:
\begin{equation}
\label{inmeas}
\int_I f(t,\omega^i_n(t))\lambda[dt]\rightarrow\int_I f(t,\omega^i(t))\lambda[dt],\ \forall f\in\mathbb{C}_b(I\times\mathbb{R}),\ \forall i\leq d,
\end{equation}
where $\lambda[dt]:=e^{-t}dt$.}
\end{definition}

On $\mathbb{D}([0,T];\mathbb{R}^d)$, we additionally require the convergence of the terminal value; \eqref{interm} below. Without this addition, the topology is not a Hausdorff topology on $\mathbb{D}([0,T];\mathbb{R}^d)$.

\begin{definition}\textnormal{ 
The topology $MZ$ on $\mathbb{D}(I;\mathbb{R}^d)$, where $I=[0,T]$, is the topology generated by the convergence \eqref{inmeas} in conjunction with the convergence:
\begin{equation}
\label{interm}
\omega_n(T)\rightarrow\omega(T).
\end{equation}
}
\end{definition}

The key lemma of Meyer and Zheng \cite[Lemma~1]{meyerzheng}, extends to $I=[0,T]$, for $T$ finite, and $d>1$ via a simple iterative argument; cf. Section~\ref{sec:stab}.

\begin{lemma}
\label{lem:lebesgue}
Let $(\omega_n)_{n\in\mathbb{N}}$ and $\omega$ be paths in $\mathbb{D}(I;\mathbb{R}^d)$ such that $\omega_n\rightarrow_{MZ}\omega$. Then $\omega^i_n\rightarrow_\lambda\omega^i$, for every $i\leq d$. Moreover, there exists a subsequence $(\omega_{n_k})$ and a set $L\subset I$ of full Lebesgue measure such that $T\in L$, if $I=[0,T]$, and $\omega^i_{n_k}(t)\rightarrow\omega^i(t)$, for every $i\leq d$, for every $t\in L$. In particular, there exists a (countable) dense set $D\subset I$ such that $T\in D$, if $I=[0,T]$, and $\omega^i_{n_k}(t)\rightarrow\omega^i(t)$, for every $i\leq d$, for every $t\in D$.
\end{lemma}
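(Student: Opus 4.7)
The plan is to reduce to the scalar infinite-horizon Meyer-Zheng lemma \cite{meyerzheng}[Lemma~1] and then iterate over coordinates. For $I=[0,\infty[$, I would first observe that testing the convergence \eqref{inmeas} against functions of the form $f(t,x) = g(t)h(x)$ with $g\in\mathbb{C}_b(I)$ and $h\in\mathbb{C}_b(\mathbb{R})$ shows that, for each fixed $i\leq d$, the one-dimensional sequence $(\omega_n^i)$ converges in the Meyer-Zheng pseudo-path sense on $\mathbb{D}([0,\infty[;\mathbb{R})$. Then \cite{meyerzheng}[Lemma~1] immediately gives $\omega_n^i\rightarrow_\lambda\omega^i$. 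For $I=[0,T]$ with $T<\infty$, I would extend each path in $\mathbb{D}([0,T];\mathbb{R}^d)$ to $[0,\infty[$ by its value at $T$ on $]T,\infty[$: the $MZ$-convergence on $[0,T]$ together with the terminal-value condition \eqref{interm} is precisely $MZ$-convergence of the extensions on $[0,\infty[$, reducing this case to the previous one, and since $\lambda$ and Lebesgue measure share the same null sets on $[0,T]$, convergence in $\lambda$-measure and in Lebesgue measure coincide on bounded subintervals.

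With coordinatewise convergence in $\lambda$-measure in hand, I would extract an a.e.-convergent subsequence by the standard Riesz argument applied iteratively: from $(\omega_n)$ pick $(\omega_{n_k^1})$ so that $\omega_{n_k^1}^1(t)\to\omega^1(t)$ off a Lebesgue null set $N_1$; from $(\omega_{n_k^1})$ pick a further subsequence $(\omega_{n_k^2})$ doing the same for coordinate $2$ off a null set $N_2$; and so on through coordinate $d$. Setting $L_0 := I \setminus (N_1\cup\cdots\cup N_d)$ produces a set of full Lebesgue measure on which $\omega_{n_k}^i(t)\to\omega^i(t)$ for every $i\leq d$. In the case $I=[0,T]$, the terminal convergence $\omega_n(T)\to\omega(T)$ from \eqref{interm} survives the passage to any subsequence, so I put $L := L_0\cup\{T\}$, which still has full Lebesgue measure.

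Finally, a full Lebesgue measure subset of an interval is dense, so I can select a countable dense $D\subset L$ (adjoining $T$ when $I=[0,T]$) to obtain the last assertion. The main piece of bookkeeping, and the only slightly delicate point, is the finite-horizon case: the original Meyer-Zheng formulation is stated on $[0,\infty[$ and does not see the boundary value at $T$, which is why the explicit extension device and the separate terminal-value axiom \eqref{interm} are needed. This is precisely the reason the paper imposes the product-space convention $\mathbb{D}([0,\infty];\mathbb{R}^d)=\mathbb{D}([0,\infty[;\mathbb{R}^d)\times\mathbb{R}^d$ in Subsection~\ref{sec:semidef}; beyond that, the argument is the iterative Riesz extraction already outlined.
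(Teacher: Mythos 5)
Your proposal is correct and follows the same skeleton as the paper's proof: establish coordinatewise convergence in $\lambda$-measure, extract an a.e.-convergent subsequence coordinate by coordinate via the Riesz argument, intersect the $d$ full-measure sets, adjoin $T$ using \eqref{interm}, and extract a countable dense $D$. The only substantive difference is how the convergence in $\lambda$-measure is obtained. The paper does not invoke \cite{meyerzheng}[Lemma~1] as a black box; it reruns the argument directly from \eqref{inmeas} by testing against $f(t,x)=\alpha(t)\arctan(x)$ and $f(t,x)=\alpha(t)\arctan^2(x)$ to get first weak and then strong convergence of $\arctan(\omega^i_n)$ in $L^2(\lambda)$, hence convergence in measure. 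Because that computation works verbatim on $I=[0,T]$, the paper needs no constant-extension device for the finite horizon: the terminal condition \eqref{interm} is used only at the very end, to note that $T$ may be adjoined to $L$. Your extension trick is a legitimate alternative (the forward implication you need --- that \eqref{inmeas} on $[0,T]$ plus \eqref{interm} yields $MZ$-convergence of the constant extensions on $[0,\infty[$ --- follows from dominated convergence on $]T,\infty[$), but it buys nothing over the direct argument and adds a small verification; also, the product-space convention you cite concerns $\mathbb{D}([0,\infty];\mathbb{R}^d)$ rather than the finite-horizon case, so it is not really the relevant mechanism here. Your remark that testing against products $g(t)h(x)$ reduces to the scalar case is harmless but redundant, since \eqref{inmeas} is already stated coordinatewise for all $f\in\mathbb{C}_b(I\times\mathbb{R})$.
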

\begin{proof}
Let $\omega_n\rightarrow_{MZ}\omega$. By the definition \eqref{inmeas}, we have
\[
\int_I f(t,\omega^i_n(t))\lambda[dt]\rightarrow\int_I f(t,\omega^i(t))\lambda[dt],\ \forall f\in\mathbb{C}_b(I\times\mathbb{R}^d),\ \forall i\leq  d,
\]
where the measure $\lambda[dt]=e^{-t}dt$ is equivalent to the Lebesgue measure on $I$. Taking $f(t,x):=\alpha(t)\arctan(x)$, $\alpha\in\mathbb{C}_b(I)$, we deduce that $u^i_n:=\arctan(\omega^i_n)$ converges weakly to $u^i:=\arctan(\omega^i)$ in $L^2(\lambda)$, for every $i\leq d$. Further, taking $f(t,x):=\alpha(t)\arctan^2(x)$, $\alpha\in\mathbb{C}_b(I)$, we deduce that $u^i_n$ converges strongly to $u^i$ in $L^2(\lambda)$, and consequently, $\omega^i_n$ converges in measure $\lambda$ to $\omega^i$ in $I$, i.e., $\omega^i_n\rightarrow_\lambda\omega^i$, for every $i\leq d$. Thus, for $i=1$, there exists a subsequence $(\omega_{n_\ell})_{\ell\in\mathbb{N}}=(\omega^1_{n_\ell},\dots,\omega^d_{n_\ell})_{\ell\in\mathbb{N}}$ of $(\omega_n)_{n\in\mathbb{N}}$ such that 
\begin{equation}
\label{eq:iter}
\omega^1_{n_\ell}(t)\rightarrow\omega^1(t),
\end{equation}
for every $t$ in some set $L_1$ of full Lebesgue measure. By the bounded convergence theorem, we have
\[
\int_I f(t,\omega^i_{n_\ell}(t))\lambda[dt]\rightarrow\int_I f(t,\omega^i(t))\lambda[dt],\ \forall f\in\mathbb{C}_b(I\times\mathbb{R}^d),\ \forall i\leq d.
\]
Now, by replacing $i=1$ with $i=2$ and $(\omega_n)_{n\in\mathbb{N}}$ with $(\omega_{n_\ell})_{\ell\in\mathbb{N}}$ preceding \eqref{eq:iter}, we obtain a set $L_2$ of full Lebesgue measure and a further subsequence $(\omega_{{n_\ell}_m})_{m\in\mathbb{N}}=(\omega^1_{n_{\ell_m}},\dots,\omega^d_{n_{\ell_m}})_{m\in\mathbb{N}}$ of $(\omega_n)_{n\in\mathbb{N}}$ such that $\omega^2_{n_{\ell_m}}(t)\rightarrow\omega^2(t)$, for every $t\in L_2$. We have
\[
\omega^1_{n_{\ell_m}}(t)\rightarrow\omega^1(t)\text{ and }\omega^2_{n_{\ell_m}}(t)\rightarrow\omega^2(t),
\]
for every $t\in L_1\cap L_2$, where the set $L_1\cap L_2$ is of full Lebesgue measure. By repeating the argument $d-2$ more times, we obtain a set $L:=L_1\cap L_2\cap\dots\cap L_d$ and a subsequence $(\omega_{n_k})_{k\in\mathbb{N}}=(\omega^1_{n_{k}},\dots,\omega^d_{n_{k}})_{k\in\mathbb{N}}$ such that
\[
\omega^i_{n_{k}}(t)\rightarrow\omega^i(t),\ \forall i\leq d,
\]
for every $t\in L$, where the set $L$ is of full Lebesgue measure. Moreover, by \eqref{interm}, for $I=[0,T]$, we have $\omega_n(T)\rightarrow\omega(T)$, so, the set $L$ can be chosen to contain the terminal time $T$. The complement of $L$ is a $\lambda$-null set, so, the set $L$ contains a (countable) dense set $D$ such that $T\in D$, if $I=[0,T]$.
\end{proof}

\begin{corollary}
\label{minusfilt}
We have $\F^o_{t-}:=\sigma(\omega(s):s\in [0,t))=\sigma(\omega(s):s\in D\cap[0,t))$, for any countable dense subset $D$ of $[0,t)$, for every $t\leq T$. Moreover, we have
\[
\F^o_{t}=\sigma(\G^o_t,\omega(t)),\ t\leq T,
\]
where $\G^o_t$ denotes the $\sigma$-algebra generated by the family of $\F^o_{t}$-measurable $MZ$-continuous functions.
\end{corollary}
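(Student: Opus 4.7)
My plan is to dispense with the equality $\F^o_{t-}=\sigma(\omega(s):s\in D\cap[0,t[)$ first. The inclusion $\supset$ is trivial. For $\subset$, fix any $s\in[0,t[$ and, using density of $D$ in $[0,t[$, pick a sequence $r_n\in D\cap(s,t)$ with $r_n\downarrow s$; the right-continuity of the canonical paths then yields $\omega(r_n)\to\omega(s)$ pointwise on $\mathbb{D}$, so $\omega(s)$ is $\sigma(\omega(r):r\in D\cap[0,t[)$-measurable as a pointwise limit.

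For the second equality $\F^o_{t}=\sigma(\G^o_t,\omega(t))$, the inclusion $\supset$ is immediate since $\G^o_t\subset\F^o_t$ by definition and $\omega(t)$ is $\F^o_t$-measurable. For the nontrivial inclusion $\subset$, it suffices to prove that $\omega(s)$ is $\G^o_t$-measurable for every $s<t$. Given such an $s$ and any $0<\delta<t-s$, consider the auxiliary functional
\[
F_\delta(\omega):=\int_s^{s+\delta}\arctan(\omega(u))\,du.
\]
Since $F_\delta$ depends only on the restriction of $\omega$ to $[s,s+\delta]\subset[0,t]$, it is $\F^o_t$-measurable. I claim it is also $MZ$-continuous: if $\omega_n\to_{MZ}\omega$, then by Lemma~\ref{lem:lebesgue} we have $\omega_n\to\omega$ in Lebesgue measure on $I$, hence $\arctan(\omega_n)\to\arctan(\omega)$ in Lebesgue measure on $[s,s+\delta]$, and the uniform bound $|\arctan|\leq\pi/2$ together with dominated convergence on the bounded interval $[s,s+\delta]$ yields $F_\delta(\omega_n)\to F_\delta(\omega)$. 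Therefore $F_\delta\in\G^o_t$.

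Choosing $\delta_n\downarrow 0$ with $\delta_n<t-s$, the right-continuity of $\omega$ at $s$ gives
\[
\frac{1}{\delta_n}F_{\delta_n}(\omega)=\frac{1}{\delta_n}\int_s^{s+\delta_n}\arctan(\omega(u))\,du\;\longrightarrow\;\arctan(\omega(s)),
\]
so $\arctan(\omega(s))$ is $\G^o_t$-measurable as a pointwise limit, and hence so is $\omega(s)=\tan(\arctan(\omega(s)))$ (the arctangent stays in $(-\pi/2,\pi/2)$ since $\omega(s)$ is finite). This, together with the fact that $\omega(t)\in\sigma(\omega(t))$, completes the inclusion $\F^o_t=\sigma(\omega(s):s\leq t)\subset\sigma(\G^o_t,\omega(t))$.

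The main obstacle I anticipate is the $MZ$-continuity of the auxiliary functional $F_\delta$. The naive candidates $\omega\mapsto\omega(u)$ and $\omega\mapsto\frac{1}{\delta}\int_s^{s+\delta}\omega(u)\,du$ are \emph{not} $MZ$-continuous, because an $MZ$-convergent sequence need not be uniformly bounded on the compact time interval $[s,s+\delta]$, so dominated convergence fails directly. The precomposition with $\arctan$ (any bounded continuous injection of $\mathbb{R}$ onto an open subinterval of $\mathbb{R}$ would serve as well) is the essential device that converts the $\lambda$-measure convergence supplied by Lemma~\ref{lem:lebesgue} into convergence of the integral.
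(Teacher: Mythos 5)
Your proof is correct and follows essentially the same route as the paper: point evaluations $\omega(s)$, $s<t$, are recovered as pointwise limits of $\F^o_t$-measurable, $MZ$-continuous averaged integral functionals, and $\omega(t)$ is adjoined separately, while the first identity follows from right-continuity along a sequence in $D$ decreasing to $s$. Your $\arctan$-composition is in fact a necessary refinement of the paper's argument, which displays the raw functionals $\omega\mapsto\frac{1}{\varepsilon}\int_0^\varepsilon\omega^i(t+u)\,du$; these are not $MZ$-continuous, since convergence in $\lambda$-measure does not control such integrals against unbounded spikes (e.g. $\omega_n=n^2\one_{[s,s+1/n)}\rightarrow_{MZ}0$ while the integrals diverge), so your version is the careful form of the same proof.
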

\begin{proof}
Let $\G^o_t$ denote the $\sigma$-algebra generated by the family of $\F^o_t$-measurable $MZ$-continuous functions. We have $\G^o_t\subset\F^o_t$ and $\F^o_{t-}\subset\G^o_t$ from Lemma~\ref{lem:lebesgue}. Moreover,  we have
\[
\omega^i(t)=\lim_{\varepsilon\rightarrow 0}\frac{1}{\varepsilon}\int_0^\varepsilon\omega^i(t+u)du,\ i\leq d,\ \varepsilon< T-t,
\]
where each $\omega\mapsto\frac{1}{\varepsilon}\int_0^\varepsilon\omega^i(t+u)du$ is an $MZ$-continuous function. Thus, the assertion follows.
\end{proof}
 
\begin{lemma}
\label{lem:lsc}
The mappings
\[
\omega\mapsto\|\omega\|_\infty\text{ and }\omega\mapsto N^{a,b}(\omega^i),\ a<b,\ i\leq d,
\]
are lower $MZ$-semicontinuous.
\end{lemma}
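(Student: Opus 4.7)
The plan is to reduce both claims to the subsequential pointwise convergence supplied by Lemma~\ref{lem:lebesgue}. Since $MZ$ is metrizable, sequential lower semicontinuity is equivalent to lower semicontinuity, so it suffices to show that, whenever $\omega_n\rightarrow_{MZ}\omega$, $\liminf_n F(\omega_n)\ge F(\omega)$ for $F=\|\cdot\|_\infty$ and $F=N^{a,b}(\cdot^i)$. For each $F$, I will use the standard subsequence-of-subsequence trick: assuming $c:=\liminf_n F(\omega_n)<\infty$, extract a subsequence along which $F(\omega_{n_j})\to c$, and then apply Lemma~\ref{lem:lebesgue} to obtain a further subsequence $(\omega_{n_{j_k}})$ and a countable dense set $D\subset I$ (with $T\in D$ when $I=[0,T]$) such that $\omega^i_{n_{j_k}}(t)\to \omega^i(t)$ for every $t\in D$ and $i\le d$.

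For the uniform norm, the key observation is that for any càdlàg path $\omega$ one has $\|\omega\|_\infty=\sup_{t\in D}\|\omega(t)\|_\infty$, because right-continuity together with the density of $D$ lets any value $\omega(t)$ be written as a limit of values along points of $D$, and the endpoint $T$ is included in $D$ in the finite-horizon case. Hence, fixing $t\in D$, pointwise convergence gives $\|\omega(t)\|_\infty=\lim_k\|\omega_{n_{j_k}}(t)\|_\infty\le \liminf_k\|\omega_{n_{j_k}}\|_\infty=c$, and taking the supremum over $t\in D$ yields $\|\omega\|_\infty\le c$.

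For the upcrossings functional, fix $m\in\mathbb{N}$ with $m\le N^{a,b}(\omega^i)$ and choose, by definition, times $s_1<t_1<s_2<t_2<\cdots<s_m<t_m$ in $I$ with $\omega^i(s_j)<a$ and $\omega^i(t_j)>b$ for every $j$; note that all these inequalities are strict (the times must be strictly ordered, else the same instant would satisfy both $<a$ and $>b$), so there is some $\varepsilon>0$ with $\omega^i(s_j)<a-\varepsilon$ and $\omega^i(t_j)>b+\varepsilon$. Using right-continuity of $\omega^i$, density of $D$, and the fact that $T\in D$ when $I=[0,T]$ (so that one can take $t_m'=T$ if $t_m=T$), I will approximate each $s_j,t_j$ from the right by nearby points $s_j',t_j'\in D$ that preserve the strict ordering and satisfy $\omega^i(s_j')<a-\varepsilon/2$, $\omega^i(t_j')>b+\varepsilon/2$. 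The pointwise convergence on $D$ then gives $\omega^i_{n_{j_k}}(s_j')<a$ and $\omega^i_{n_{j_k}}(t_j')>b$ for all $k$ large, producing $m$ upcrossings of $\omega^i_{n_{j_k}}$ along the partition $\{s_j',t_j'\}_j$, so $N^{a,b}(\omega^i_{n_{j_k}})\ge m$ for large $k$, giving $c\ge m$. Letting $m\uparrow N^{a,b}(\omega^i)$ concludes the argument.

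The only delicate step is the right-approximation of the $s_j,t_j$ inside the countable dense set $D$ without destroying the strict ordering or the strict sign conditions; this is where the strictness $s_j<t_j$ in any valid upcrossing sequence, right-continuity of $\omega^i$, the margin $\varepsilon$, and the inclusion of $T$ in $D$ in the finite-horizon case all come together.
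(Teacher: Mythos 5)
Your argument is correct, but it follows a different route from the paper's. The paper works directly at the level of the topology: it shows that sets of the form $\{\omega:\exists u\in[s,t[\text{ s.t. }\omega^i(u)>b\}$ (and the analogous sets with $<a$, and the terminal-value variant) are $MZ$-open, by contradiction against the generating functionals \eqref{inmeas}--\eqref{interm}; it then observes that for a \emph{fixed finite} partition $\pi$ the level sets $\{N^{a,b}_\pi(\omega^i)\geq k\}$ are finite Boolean combinations of such open sets, hence open, so each $N^{a,b}_\pi$ is lower semicontinuous, and $N^{a,b}=\sup_\pi N^{a,b}_\pi$ is lower semicontinuous as a supremum of lower semicontinuous functions (the uniform norm is handled by the same openness argument). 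That route yields genuine, non-sequential lower semicontinuity without appealing to metrizability. You instead reduce everything to sequences via the metrizability of $MZ$ and then invoke Lemma~\ref{lem:lebesgue} to get pointwise convergence on a countable dense set $D\ni T$, after which both functionals are controlled by a pathwise right-approximation argument; this is more elementary and pathwise, but it leans on Lemma~\ref{lem:lebesgue} and on the metrizability of $MZ$ (which the paper asserts elsewhere but does not re-derive here), and it delivers the topological statement only through the sequential one. Your subsequence-of-subsequence framing, the identity $\|\omega\|_\infty=\sup_{t\in D}\|\omega(t)\|_\infty$, and the $\varepsilon$-margin trick for perturbing the upcrossing times into $D$ are all sound; note only that, exactly as in the paper's own proof, the upcrossing argument requires the \emph{strict}-inequality convention $\omega^i(s_j)<a$, $\omega^i(t_j)>b$ in the definition of $N^{a,b}_\pi$ (with the non-strict convention the pointwise limits would not propagate the sign conditions), and your proof correctly relies on exactly that convention.
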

\begin{proof}
The proof is adapted from \cite{meyerzheng}. Let $i\leq d$ and $\omega^i_n\rightarrow_{MZ}\omega^i$ with $\sup_n\|\omega^i_n\|\leq c$. If $\|\omega^i\|_\infty>c$, then there exists $s<t$ such that $\omega^i(u)>c$, for all $u\in[s,t)$, or we have $\omega^i(T)>c$, either way, there exists an $MZ$-continuous function $F$ for which $\lim_{n\rightarrow\infty}F(\omega^i_n)< F(\omega^i)$, cf. \eqref{inmeas} and \eqref{interm}, so, this is a contradiction. Thus, the mapping $\omega\mapsto\|\omega\|_\infty:=\|\omega^1\|_\infty\vee\dots\vee\|\omega^d\|_\infty$ is lower $MZ$-semicontinuous. Similarly, one can show that the sets of the form $\{\omega:\exists u\in[s,t)\text{ s.t. }\omega^i(u)>b\}$ and $\{\omega:\exists u\in[s,t)\text{ s.t. }\omega^i(u)<a\}$, $s<t$, $a<b$ are open in the $MZ$-topology, from which the lower $MZ$-semicontinuity of the mappings $N^{a,b}$, $a<b$, follows. Indeed, let $a<b$ be fixed and consider a finite partition $\pi:=\{t_0<t_1<\cdots<t_n\}$ of $[0,t_n]$. We write $N^{a,b}_\pi(\omega^i)\geq k$, if one can find
\[
l_1<m_1\leq l_2<m_2\leq\cdots <l_k<m_k\leq n
\]
such that, for all $j<k$, $\omega^i(s)<a$, for some $s\in[t_{l_{j-1}},t_{l_j})$, and $\omega^i(t)>b$, for some $t\in[t_{m_{j-1}},t_{m_j})$ (or, for $t=T$, if $j=k$ and $m_k=n$). The partition $\pi$ is finite, so, the sets
\[
\{\omega:N^{a,b}_\pi(\omega^i)\geq k\}=\{\omega:N^{a,b}_\pi(\omega^i)>k-1\},\ k\in\mathbb{N},
\]
are open in the $MZ$-topology. Consequently, the mapping $\omega\mapsto N^{a,b}_\pi(\omega^i)$ and the mapping $N^{a,b}(\omega^i):=\sup_\pi N^{a,b}_\pi(\omega^i)$ are lower $MZ$-semicontinuous, for every $i\leq d$.
\end{proof}

We refer the reader to the book by Dellacherie and Meyer \cite[A,~IV]{dellacheriemeyer} and the paper \cite{meyerzheng} by Meyer and Zheng for details on pseudo-paths and the Meyer-Zheng topology, respectively.

\subsubsection{The Skorokhod's $J^1$-topology}
\label{sec:jone}

The following \emph{complete} metric, generating a topology called Skorokhod's $J^1$-topology, was introduced by Kolmogorov in \cite{kolmogorov2}. The metric introduced by Skorokhod himself in \cite{skorokhod} was not yet complete despite generating an equivalent topology.

\begin{definition}\textnormal{ 
The Skorokhod's $J^1$-topology on $\mathbb{D}([0,T];\mathbb{R}^d)$ is the topology generated by the complete metric
\begin{equation}
\label{skorometric}
J^1_T(\omega,\widetilde{\omega}):=\inf_{\lambda\in\Lambda}\left\{\sup_{s<t}\left|\log\frac{\lambda t-\lambda s}{t-s}\right|\vee\|\omega-\widetilde{\omega}\circ\lambda\|_\infty\right\},
\end{equation}
where $\Lambda$ denotes the class of strictly increasing, continuous mappings of $[0,T]$ onto itself and $i$ is the identity map on $[0,T]$.}
\end{definition}
\begin{definition}\textnormal{ 
The Skorokhod's $J^1$-topology on $\mathbb{D}([0,\infty);\mathbb{R}^d)$ is the topology generated by the (complete) metric
\begin{equation}
\label{eq:jinf}
J^1(\omega,\widetilde{\omega}):=\sum_{r=1}^\infty 2^{-r}(1\wedge J^1_r([\omega]^r,[\widetilde{\omega}]^r)),
\end{equation}
where $[\omega]^r$ indicates the restriction of $\omega$ on $[0,r]$.}
\end{definition}

Let us recall the criteria for the relative compactness in the $J^1$-topology from \cite{billingsley}. Let $\delta>0$ and denote
\[
m_\delta(\omega):=\inf_{t_i\in\pi}\max_{i\leq n}\sup_{s,t\in]t_{i-1},t_i]}|\omega(s)-\omega(t)|,\quad \omega\in\mathbb{D}([0,T];\mathbb{R}^d),
\]
where the infimum is take over all finite partitions $0=t_0<t_1<\cdots<t_n= T$ of $[0,T]$ with the mesh size $t_i-t_{i-1}>\delta$, for all $i\leq n$.
\begin{lemma}
A subset $K$ of $\mathbb{D}([0,T];\mathbb{R}^d)$, $T<\infty$, is relatively $J^1$-compact, if it is bounded and
\[
\lim_{\delta\rightarrow 0}\sup_{\omega\in K}m_\delta(\omega)=0.
\]
A subset $K$ of $\mathbb{D}([0,\infty);\mathbb{R}^d)$ is relatively $J^1$-compact, if the restriction of $K$ on $\mathbb{D}([0,T];\mathbb{R}^d)$ is relatively $J^1$-compact, for every $T<\infty$.
\end{lemma}
We refer the reader to \cite[Section~12,~16]{billingsley} for details on the Skorokhod's $J^1$-metric on $\mathbb{D}([0,T];\mathbb{R}^d)$ and $\mathbb{D}([0,\infty);\mathbb{R}^d)$, respectively.

\subsection{The proof of Lemma~\ref{lem:lowther}}
\label{sec:proof}

Recall that we claimed that there exists a uniform constant $b>0$ such that, for any $Q\in\P\left(\mathbb{D}(I;\mathbb{R}^d),\F_T\right)$, $H\in\E(Q)$ and $c>0$, we have
\begin{equation}
\label{eq:lowther}
Q[|(H\bullet X)_t|> c]\leq \frac{b}{c}\left[E_Q[|X_t|]+\sup_{H\in\E(Q)}E_Q[(H\bullet X)_t]\right],\ t\in I.
\end{equation}

\begin{proof}
The inequality \eqref{eq:lowther} is a generalizations of Burkholder's inequality, which states that there exists a uniform constant $a>0$ such that, for any $H\in\E(Q)$, $Q$-martingale $M$ and $c>0$, we have
\begin{equation}
\label{eq:burkholder}
Q[|H\bullet M|_t>c]\leq\frac{a}{c}E_Q[|M_t|],\ t\in I;
\end{equation}
see e.g. \cite{meyer} or \cite{bichteler} for a proof of \eqref{eq:burkholder}. For a fixed $Q\in\P\left(\mathbb{D}(I;\mathbb{R}^d),\F_T\right)$ and $H\in\E(Q)$, by \cite[B,~Appendix~2.3]{dellacheriemeyer}, we have
\begin{equation}
\label{eq:00}
\sup_{H\in\E(Q)}E_Q[(H\bullet X)_t]=\sum_{i=1}^d\text{Var}^Q_t(X^i),\ t\in I.
\end{equation}
Let us fix $i\leq d$ and assume that $E_Q[|X_t^i|]+\text{Var}^Q_t(X^i)$ is finite, otherwise, the result is trivial. Let $t^i_0<t^i_1<\cdots<t^i_n= t$ and $H=(H^i)_{i=1}^d$ be an element of $\E(Q)$, i.e.,
\[
H^i=\sum_{k=1}^nH^i_{t^i_k}\one_{]t^i_{k-1},t^i_k]},\ i\leq d,
\]
where each $|H^i_{t^i_k}|\leq 1$ is $\F_{t^i_k}$-measurable; cf. \eqref{eq:strats}. Consider the Doob decomposition
\[
X^i_{t^i_k}=M^i_{t^i_k}+A^i_{t^i_k},\ k=1,2,\dots,n,
\]
where $A^i_{t^i_k}=\sum_{j=1}^kE_Q[X^i_{t^i_j}-X^i_{t^i_{j-1}}\mid\F_{t^i_{j-1}}]$ and $M^i_{t^i_k}$ is a $Q$-martingale on $\{t^i_0,t^i_1,\dots,t^i_n\}$. We have
\begin{equation}
\label{eq:1}
Q[|(H^i\bullet A^i)_t|>c]\leq\frac{1}{c}E_Q[|(H^i\bullet A^i)_t|]\leq\frac{1}{c}\text{Var}^Q_t(X^i),\ i\leq d.
\end{equation}
Similarly, for $M^i$, we have
\[
E_Q[|M^i_t|]\leq E_Q[|X^i_t|+|A^i_t|]\leq E_Q[|X^i_t|]+\text{Var}^Q_t(X^i).
\]
Hence, by \eqref{eq:burkholder}, we have
\begin{equation}
\label{eq:2}
Q[|(H^i\bullet M^i)_t|>c]\leq\frac{a}{c}\left(E_Q[|X^i_t|]+\text{Var}^Q_t(X^i)\right),\ i\leq d.
\end{equation}
Combining \eqref{eq:00}, \eqref{eq:1} and \eqref{eq:2}, for $H\in\E(Q)$, $M=(M^i)_{i=1}^d$ and $A=(A^i)_{i=1}^d$, we get
\[
\begin{split}
Q[|(H\bullet X)_t|>c]&\leq Q[|(H\bullet M)_t+(H\bullet A)_t|>c]\\
&\leq \sum_{i=1}^d Q\left[|(H^i\bullet M^i)_t|>\frac{c}{2d}\right]+\sum_{i=1}^d Q\left[|(H^i\bullet A^i)_t|>\frac{c}{2d}\right]\\
&\leq \frac{2ad}{c}\sum_{i=1}^d\left(E_Q[|X^i_t|]+\text{Var}^Q_t(X^i) \right)+\frac{2d}{c}\sum_{i=1}^d\text{Var}^Q_t(X^i)\\
&\leq \frac{b}{c}\left(E_Q[|X_t|]+\sup_{H\in\E(Q)}E_Q[(H\bullet X)_t] \right),\ t\in I,\ c>0,
\end{split}
\]
where $b:=2(a+1)d$. The proof is completely similar for the filtration $(\F^o_t)_{t\in I}$.
\end{proof}

\bibliographystyle{alpha}
\bibliography{sp}

\def\cprime{$'$}
\begin{thebibliography}{BHLP13}

\bibitem[Ale43]{alexandroff}
A.~D. Alexandroff.
\newblock Additive set functions in abstract spaces.
\newblock {\em Mat. Sbornik}, 1940--43.
\newblock Chapter 1: {\bf50}(NS~8) 1940, 307--342; Chapters~2 and~3:
  {\bf51}(NS~9) 1941, 563--621; Chapters~4 and~5: {\bf55}(NS~13) 1943,
  169--234.

\bibitem[BGM11]{bahlali2011}
K.~Bahlali, B.~Gherbal, and B.~Mezerdi.
\newblock Existence of optimal controls for systems driven by {FBSDEs}.
\newblock {\em Systems \& Control Letters}, 60(5):344--349, 2011.

\bibitem[BHLP13]{beiglbock}
M.~Beiglb{\"o}ck, P.~Henry-Labord{\`e}re, and F.~Penkner.
\newblock Model-independent bounds for option prices---a mass transport
  approach.
\newblock {\em Finance and Stochastics}, 17(3):477--501, 2013.

\bibitem[Bic81]{bichteler}
K.~Bichteler.
\newblock Stochastic integration and {$L^p$}-theory of semimartingales.
\newblock {\em Ann. Probab.}, 9(1):49--89, 02 1981.

\bibitem[Bil68]{billingsley}
P.~Billingsley.
\newblock {\em Convergence of Probability Measures}.
\newblock Wiley Series in Probability and Mathematical Statistics: Tracts on
  probability and statistics. Wiley, 1968.

\bibitem[Bog98]{bogachevgaussian}
V.~I. Bogachev.
\newblock {\em Gaussian Measures}.
\newblock Mathematical surveys and monographs. American Mathematical Society,
  1998.

\bibitem[Bog07]{bogachev}
V.~I. Bogachev.
\newblock {\em Measure Theory}.
\newblock Springer-Verlag, Berlin Heidelberg, 2007.

\bibitem[BS73]{blackscholes}
F.~Black and M.~Scholes.
\newblock The pricing of options and corporate liabilities.
\newblock {\em Journal of Political Economy}, 81(3):637--654, 1973.

\bibitem[BS17]{bogachevtvs}
V.~I. Bogachev and O.~Smolyanov.
\newblock {\em Topological Vector Spaces and Their Applications}.
\newblock Springer Monographs in Mathematics. Springer International
  Publishing, 2017.

\bibitem[Buc58]{buck}
R.~C. Buck.
\newblock Bounded continuous functions on a locally compact space.
\newblock {\em Michigan Math. J.}, 5:95--104, 1958.

\bibitem[CKPS20]{mart}
P.~Cheridito, M.~Kiiski, D.~J. Pr{\" o}mel, and H.~M. Soner.
\newblock Martingale optimal transport duality.
\newblock {\em Mathematische Annalen}, 2020.

\bibitem[CZ10]{chan2010}
N.~H. Chan and R.-M. Zhang.
\newblock Inference for unit-root models with infinite variance {GARCH} errors.
\newblock {\em Statistica Sinica}, 20(4):1363--1393, 2010.

\bibitem[DM78]{dellacheriemeyer}
C.~Dellacherie and P.-A. Meyer.
\newblock {\em Probabilities and Potential}, volume~29 of {\em North-Holland
  Mathematics Studies}.
\newblock North-Holland Publishing Co., Amsterdam, 1978.

\bibitem[DS15]{mete2}
Y.~Dolinsky and H.~M. Soner.
\newblock Martingale optimal transport in the {S}korokhod space.
\newblock {\em Stochastic Processes and their Applications}, 125(10):3893 --
  3931, 2015.

\bibitem[Eng77]{engelking}
R.~Engelking.
\newblock {\em General Topology}.
\newblock Monografie matematyczne. PWN, 1977.

\bibitem[Fel81]{feldman}
M.~B. Feldman.
\newblock A proof of {Lusin}'s theorem.
\newblock 88(3):191--192, 1981.

\bibitem[Fer67]{fernique}
X.~Fernique.
\newblock Processus lin{\'e}ires, processus g{\'e}n{\'e}ralis{\'e}s.
\newblock {\em Annales de l'institut Fourier}, 17(1):1--92, 1967.

\bibitem[FGT12]{florescu}
L.~C. Florescu and C.~Godet-Thobie.
\newblock {\em Young measures and compactness in measure spaces}.
\newblock Walter de Gruyter, 2012.

\bibitem[Flo06]{floret}
K.~Floret.
\newblock {\em Weakly compact sets: lectures held at SUNY, Buffalo, in spring
  1978}, volume 801.
\newblock Springer, 2006.

\bibitem[FS11]{follmer}
H.~F{\"o}llmer and A.~Schied.
\newblock {\em Stochastic finance: an introduction in discrete time}.
\newblock Walter de Gruyter, 2011.

\bibitem[Gil71]{giles}
R.~Giles.
\newblock A generalization of the strict topology.
\newblock {\em Trans. Amer. Math. Soc.}, 161:467--474, 1971.

\bibitem[GJ60]{jerison}
L.~Gillman and M.~Jerison.
\newblock {\em Rings of Continuous Functions}.
\newblock Graduate texts in mathematics. Springer-Verlag,, New York :, 1976,
  c1960.
\newblock Reprint of the 1960 ed. published by Van Nostrand, Princeton, N.J.,
  in series: The University series in higher mathematics.

\bibitem[GTT17]{touzi}
G.~Guo, X.~Tan, and N.~Touzi.
\newblock Tightness and duality of martingale transport on the {S}korokhod
  space.
\newblock {\em Stochastic Processes and their Applications}, 127(3):927 -- 956,
  2017.

\bibitem[Her65]{herrlich}
H.~Herrlich.
\newblock Wann sind alle stetigen abbildungen in {Y} konstant?
\newblock {\em Mathematische Zeitschrift}, 90:152--154, 1965.

\bibitem[HJ72]{hoffmanjorgensen}
J.~Hoffmann-{J}orgensen.
\newblock A generalization of the strict topology.
\newblock {\em Mathematica Scandinavica}, 30(0):313--323, 1972.

\bibitem[HRY15]{hirsch}
F.~Hirsch, B.~Roynette, and M.~Yor.
\newblock {\em Kellerer's Theorem Revisited}, pages 347--363.
\newblock Springer New York, New York, NY, 2015.

\bibitem[HWY92]{hewangyan}
S.-W. He, J.-G. Wang, and J.-A. Yan.
\newblock {\em Semimartingale Theory and Stochastic Calculus}.
\newblock Kexue Chubanshe (Science Press), Beijing, 1992.

\bibitem[Jak95]{jakubowski5}
A.~Jakubowski.
\newblock Sequential topologies in probability theory: Four papers on
  functional convergence.
\newblock Toru{\' n}, Faculty of Mathematics and Informatics, Preprints, Nr
  1/95, 1995.

\bibitem[Jak97a]{jakubowski2}
A.~Jakubowski.
\newblock The almost sure {S}korohod representation for subsequences in
  nonmetric spaces.
\newblock {\em Theory Probab. Appl.}, 42 (2):pp. 167–174, 1997.

\bibitem[Jak97b]{jakubowski}
A.~Jakubowski.
\newblock A non-{S}korohod topology on the {S}korohod space.
\newblock {\em Electron. J. Probab.}, 2:no. 4, 1--21, 1997.

\bibitem[Jak18]{jakubowski4}
A.~Jakubowski.
\newblock {New characterizations of the $S$ topology on the Skorokhod space}.
\newblock {\em Electron. Commun. Probab.}, 23:16 pp., 2018.

\bibitem[Jar81]{jarchow}
H.~Jarchow.
\newblock {\em Locally Convex Spaces}.
\newblock Mathematische Leitf{\"a}den. B.G. Teubner, 1981.

\bibitem[JMP89]{jakubowski6}
A.~Jakubowski, J.~M{\'e}min, and G.~Pages.
\newblock Convergence en loi des suites d'int{\'e}grales stochastiques sur
  l'espace $\mathbb{D}^1$ de {S}korokhod.
\newblock {\em Probability Theory and Related Fields}, 81(1):111--137, 1989.

\bibitem[JS87]{jacodshiryaev}
J.~Jacod and A.~N. Shiryaev.
\newblock {\em Limit Theorems for Stochastic Processes}.
\newblock Grundlehren der mathematischen Wissenschaften in Einzeldarstellungen.
  Springer-Verlag, 1987.

\bibitem[Kel72]{kellerer}
H.~G. Kellerer.
\newblock Markov-komposition und eine anwendung auf martingale.
\newblock {\em Mathematische Annalen}, 198:99--122, 1972.

\bibitem[KN76]{kelleynamioka}
J.~L. Kelley and I.~Namioka.
\newblock {\em Linear Topological Spaces}.
\newblock Graduate texts in mathematics. Springer-Verlag, 1976.

\bibitem[Kno65]{knowles}
J.~D. Knowles.
\newblock Measures on topological spaces.
\newblock {\em Proceedings of the London Mathematical Society},
  s3-17(1):139--156, 1965.

\bibitem[Kol56]{kolmogorov2}
A.~N. Kolmogorov.
\newblock On the {S}korohod convergence.
\newblock {\em Teor. Veroyatnost. i Primenen.}, 1:239--247, 1956.

\bibitem[KS01]{kurtz2001}
T.~Kurtz and R.~Stockbridge.
\newblock Stationary solutions and forward equations for controlled and
  singular martingale problems.
\newblock {\em Electron. J. Probab.}, 6:52 pp., 2001.

\bibitem[Kur91]{kurtz}
T.~Kurtz.
\newblock Random time changes and convergence in distribution under the
  {M}eyer-{Z}heng conditions.
\newblock {\em Ann. Probab.}, 19(3):1010--1034, 07 1991.

\bibitem[Low09]{lowther2009}
G.~Lowther.
\newblock Limits of one-dimensional diffusions.
\newblock {\em Ann. Probab.}, 37(1):78--106, 01 2009.

\bibitem[Mey72]{meyer}
P.-A. Meyer.
\newblock {\em Martingales and Stochastic Integrals}.
\newblock Lecture notes in mathematics. Springer-Verlag, 1972.

\bibitem[MZ84]{meyerzheng}
P.-A. Meyer and W.~A. Zheng.
\newblock Tightness criteria for laws of semimartingales.
\newblock {\em Annales de l'I.H.P. Probabilit{\`e}s et statistiques},
  20(4):353--372, 1984.

\bibitem[Pha09]{pham}
H.~Pham.
\newblock {\em Continuous-time Stochastic Control and Optimization with
  Financial Applications}.
\newblock Springer Publishing Company, Incorporated, 1st edition, 2009.

\bibitem[Pri03]{prigent2003}
J.-L. Prigent.
\newblock Weak convergence of financial markets.
\newblock In {\em Weak Convergence of Financial Markets}, pages 129--265.
  Springer, 2003.

\bibitem[Pro05]{protter}
P.~E. Protter.
\newblock {\em Stochastic Integration and Differential Equations}.
\newblock Stochastic Modelling and Applied Probability. Springer Berlin
  Heidelberg, 2005.

\bibitem[RY13]{revuzyor}
D.~Revuz and M.~Yor.
\newblock {\em Continuous martingales and Brownian motion}, volume 293.
\newblock Springer Science \& Business Media, 2013.

\bibitem[Sch73]{schwartz}
L.~Schwartz.
\newblock {\em Radon Measures on Arbitrary Topological Spaces and Cylindrical
  Measures}.
\newblock Studies in mathematics. Published for the Tata Institute of
  Fundamental Research [by] Oxford University Press, 1973.

\bibitem[Sen72]{sentilles}
F.~D. Sentilles.
\newblock Bounded continuous functions on a completely regular space.
\newblock {\em Trans. Amer. Math. Soc.}, 168:311--336, 1972.

\bibitem[Sko56]{skorokhod}
A.~V. Skorokhod.
\newblock Limit theorems for stochastic processes.
\newblock {\em Theory of Probability \& Its Applications}, 1(3):261--290, 1956.

\bibitem[Str65]{strassen}
V.~Strassen.
\newblock The existence of probability measures with given marginals.
\newblock {\em Ann. Math. Statist.}, 36(2):423--439, 04 1965.

\bibitem[Str79]{stricker4}
C.~Stricker.
\newblock Semimartingales et valeur absolue.
\newblock {\em S{\'e}minaire de probabilit{\'e}s de Strasbourg}, 13:472--477,
  1979.

\bibitem[Str81]{stricker2}
C.~Stricker.
\newblock Sur la caract{\'e}risation des semi-martingales.
\newblock {\em S{\'e}minaire de probabilit{\'e}s de Strasbourg}, 15:523--525,
  1981.

\bibitem[Str85]{stricker}
C.~Stricker.
\newblock Lois de semimartingales et crit{\`e}res de compacit{\`e}.
\newblock {\em S{\`e}minaire de probabilit{\`e}s de Strasbourg}, 19:209--217,
  1985.

\bibitem[TT13]{tan2013}
X.~Tan and N.~Touzi.
\newblock Optimal transportation under controlled stochastic dynamics.
\newblock {\em Ann. Probab.}, 41(5):3201--3240, 09 2013.

\bibitem[Vaa98]{vaart}
A.~W. van~der Vaart.
\newblock {\em Asymptotic Statistics}.
\newblock Cambridge series in statistical and probabilistic mathematics.
  Cambridge University Press, Cambridge (UK), New York (N.Y.), 1998.
\newblock Autre tirage : 2000 (édition brochée), 2005, 2006, 2007.

\bibitem[Zhe85]{zheng}
W.~A. Zheng.
\newblock Tightness results for laws of diffusion processes application to
  stochastic mechanics.
\newblock {\em Ann. Inst. H. Poincar\'e Probab. Statist.}, 21(2):103--124,
  1985.

\end{thebibliography}

\end{document}